\documentclass[11pt]{article}

\usepackage{amsmath, amsfonts, amssymb, amsthm,  graphicx, mathtools, enumerate,bbm}

\usepackage[blocks, affil-it]{authblk}

\usepackage{mathrsfs}
\usepackage{bbm}

\usepackage[round]{natbib}
\usepackage[CJKbookmarks=true,
            bookmarksnumbered=true,
			bookmarksopen=true,
			colorlinks=true,
			citecolor=red,
			linkcolor=blue,
			anchorcolor=red,
			urlcolor=blue]{hyperref}
\usepackage{hyperref}
\usepackage[usenames]{xcolor}

\usepackage[letterpaper, left=1.2truein, right=1.2truein, top = 1.2truein, bottom = 1.2truein]{geometry}
\usepackage[ruled, vlined, lined, commentsnumbered]{algorithm2e}

\usepackage{prettyref}
\usepackage{soul}
\usepackage[font=small,labelfont=it]{caption}

\newtheorem{lemma}{Lemma}[section]

\newtheorem{thm}{Theorem}[section]

\newtheorem{remark}{Remark}[section]

\newrefformat{eq}{(\ref{#1})}
\newrefformat{chap}{paper~\ref{#1}}
\newrefformat{sec}{Section~\ref{#1}}
\newrefformat{algo}{Algorithm~\ref{#1}}
\newrefformat{fig}{Fig.~\ref{#1}}
\newrefformat{tab}{Table~\ref{#1}}
\newrefformat{rmk}{Remark~\ref{#1}}
\newrefformat{clm}{Claim~\ref{#1}}
\newrefformat{def}{Definition~\ref{#1}}
\newrefformat{cor}{Corollary~\ref{#1}}
\newrefformat{lmm}{Lemma~\ref{#1}}
\newrefformat{lemma}{Lemma~\ref{#1}}
\newrefformat{prop}{Proposition~\ref{#1}}
\newrefformat{app}{Appendix~\ref{#1}}
\newrefformat{ex}{Example~\ref{#1}}
\newrefformat{exer}{Exercise~\ref{#1}}
\newrefformat{soln}{Solution~\ref{#1}}
\newrefformat{cond}{Condition~\ref{#1}}

\newcommand\e{\varepsilon}
\renewcommand\d{\delta}

\newcommand\R{\mathbb{R}}

\renewcommand\P{\mathbb{P}}
\newcommand\E{\mathbb{E}}

\newcommand\var{\mathrm{Var}}
\newcommand\cov{\mathrm{Cov}}

\newcommand{\TV}{\mathrm{TV}}

\newcommand{\PBJ}{\mathrm{PBJ}}
\newcommand{\LRT}{\mathrm{LRT}}

\newcommand{\T}{\mathcal{T}}

\renewcommand{\th}{\mathrm{th}}

\newcommand{\KL}{\mathrm{KL}}

\newcommand{\iid}{\mathrm{iid}}

\title{Sharp phase transitions in high-dimensional changepoint detection}

\author{Daniel Xiang}
\author{Chao Gao\thanks{The research of CG was supported in part by NSF Grants ECCS-2216912 and
DMS-2310769, NSF Career Award DMS-1847590, and an Alfred Sloan fellowship.}}
\affil{
Department of Statistics, University of Chicago
}

\begin{document}
\maketitle

\begin{abstract}
We study a hypothesis testing problem in the context of high-dimensional changepoint detection. Given a matrix $X \in \R^{p \times n}$ with independent Gaussian entries, the goal is to determine whether or not a sparse, non-null fraction of rows in $X$ exhibits a shift in mean at a common index between $1$ and $n$.  We focus on three aspects of this problem: the sparsity of non-null rows, the presence of a single, common changepoint in the non-null rows, and the signal strength associated with the changepoint.  Within an asymptotic regime relating the data dimensions $n$ and $p$ to the signal sparsity and strength, the information-theoretic limits of this testing problem are characterized by a formula that determines whether or not there exists a testing procedure whose sum of Type I and II errors tends to zero as $n,p \to \infty$. The formula, called the \emph{detection boundary}, partitions the parameter space into a two regions: one where it is possible to detect the presence of a single aligned changepoint (detectable region), and another where no test is able to consistently distinguish the mean matrix from one with constant rows (undetectable region). 

\smallskip

\textbf{Keywords.} High-dimensional changepoint detection, minimax testing, phase transition, sparse mixture.
\end{abstract}

\section{Introduction}
The detection of a changepoint that occurs somewhere along a single sequence of numbers is a problem that has been studied since at least the middle of the $20^{\mathrm{th}}$ century (\cite{page1955test}), and interest in its extension to multiple sequences has grown over the past two decades. Some of the recent interest can be attributed to technological advances in devices that provide measurements in the form of multidimensional data. An event of scientific relevance may be coded within a data set as one or more changepoints along a particular dimension of measurement, as in the following applications:

\begin{enumerate}
\item \emph{Genetics.} 
Array comparative genomic hybridization (aCGH) is a tool used in human genetics to measure abnormalities with respect to a reference sample (\cite{shah2007modeling}). For each individual, a sequence of fluorescence intensities is measured; each element of the sequence corresponds to a distinct location on the individual's genome. The data can be represented by a $p \times n$ matrix of intensity measurements, where $n$ is the number of individuals, and $p$ is the number of locations on the genome where a measurement was made. Some regions along the genome may exhibit similar behavior among a sub-collection of the individuals, possibly related by common presence of a disease. It is then of interest to determine where, along the genome location dimension $p$, the average intensity of the signal becomes elevated or diminished. An approach using false discovery rates to analyze copy number variation (CNV) data was developed by \cite{efron2011false}, and Higher Criticism type test statistics for CNV analysis were proposed by \cite{jeng2013simultaneous}. \cite{wang2018high}  project each column of a CNV dataset from \cite{bleakley2011group} in the direction of the mean intensity change, and apply a univariate changepoint estimation procedure to the resulting single time series.

\item \emph{Neuroscience.} A ``resting state scan'' involves a subject whose brain at rest is imaged for a period of time, and these data are used to understand how processes of the brain behave in the absence of external stimuli (\cite{aston2012evaluating}). Certain analytical techniques require the assumption of stationarity of these measurements in time, but the extent to which this assumption is justified is not understood precisely (\cite{cole2010advances}); changepoint detection methods provide an opportunity to assess deviations from stationarity. Functional magnetic resonance image (fMRI) is a tool used to measure a signal at each voxel (volume pixel) at a discretized set of points in time. In this setting, the data can be represented as a $p \times n$ matrix, where $n$ is the number of discrete time points, and $p$ is the number of voxels at which a sequence of measurements was taken. \cite{aston2012detecting} develop changepoint detection methods using ideas from principal components analysis to detect departures from stationarity in fMRI data.

\end{enumerate}
We consider a high-dimensional changepoint testing problem where the $p \times n$ observation matrix $X$ can be regarded as a mean matrix $\theta = (\theta_{jt})_{j \in [p],t \in [n]} \in \mathbb{R}^{p \times n}$, corrupted by additive independent standard Gaussian noise in each entry.  The goal is to test the null hypothesis that each row of $\theta$ is constant, against the alternative that at a common time $t^* \in [n-1]$, a subset of the rows of cardinality $s$ undergo a change in mean.  The difficulty of this problem is characterized by the problem parameters $p$, $n$ and $s$, as well as a signal strength parameter $\rho$ that reflects the size of the change. Informally then, the testing problem can be written:
\begin{align*}
&H_0: \hspace{.1em} \theta_{j:} \in \R^{n} \text{ is a constant vector for each $j \in [p]$.} \\
&H_1: \hspace{.1em} \text{$s \in [p]$ of the rows in $\theta$ share a changepoint $t^* \in [n-1]$ with signal strength $\rho$},
\end{align*}
where under $H_1$, there are $s$ many rows of $\theta$ that are constant over columns $i=1,\dots,t^*$ and equal to a different constant after $t^*$. For a given level of sparsity $s\coloneqq |S| \in [p]$ of non-null rows, our ability to distinguish between the hypotheses $H_0$ and $H_1$ is monotone in the signal size $\rho$, which in this paper is defined as the minimum absolute coordinate-wise difference between the mean vector before and after the changepoint:
\begin{align*}
    \rho = \sqrt{\frac{t^* (n-t^*)}{n}} \min\left\{|\theta_{j,\text{before}}-\theta_{j,\text{after}}|: j\in [p] \text{ and } \theta_{j,\text{before}}\neq \theta_{j,\text{after}} \right\}.
\end{align*}

A critical signal strength $\rho^*$ determines when the problem is feasible in the following sense: when $\rho < \rho^*$, no hypothesis testing procedure can reliably distinguish between $H_0$ and $H_1$; when $\rho > \rho^*$, a powerful procedure exists with asymptotically negligible type 1 error. 
In this paper, the statistical difficulty of the changepoint detection problem is formulated via the minimax testing risk, which measures the smallest worst case testing error among all hypothesis testing procedures, defined formally in \eqref{minmaxerr}.

A {\em detection boundary} (\cite{donoho2004higher}) or \emph{critical radius} is a formula for $\rho^*$ that separates the parameter space $\Theta_1(\rho,s)$ into a detectable region (alternatives that can be reliably distinguished from the null) and an undetectable region (those that cannot). To derive this formula for the changepoint setting, we assume an asymptotic relationship between the dimensions of the data and the sparsity of signals. The relationship is defined by a pair $(a,\beta) \in (0,\infty) \times (0,1)$, where the parameter $a$ controls the number of potential changepoint locations and $\beta$ controls the signal sparsity:
\begin{equation}
\label{3log}
\begin{aligned}
\log\log\log n &\sim a\log p \\ 
s &\sim p^{1-\beta}, 
\end{aligned}
\end{equation}
where $a_p \sim b_p$ means $a_p/b_p \to 1$ as $p \to \infty$. The relationship \eqref{3log} is inspired by the testing rate derived by \cite{liu2021minimax}, in which the term $p\log\log(8n)$ appears as an inflated problem dimension. To obtain a tradeoff between $p$ and $n$, we consider an asymptotic regime where $\log\log n$ is polynomial in $p$, or equivalently, \eqref{3log} holds for some $a>0$. In this asymptotic setting, our goal is to derive an expression for the critical radius $\rho^*(a,\beta)$ and to construct an adaptive test 
for which the sum of the Type~I and Type~II error probabilities converges to zero whenever $\rho > \rho^*(a,\beta)$.

Our main result is that when the direction of the change is known, e.g. decreasing so that $\theta_{js} \geq \theta_{jt}$ for $s\leq t^* \leq t$, the detection boundary is
\begin{align}
\label{eq:one-sided-radius}
\rho^*(a,\beta)_{\mathrm{1-side}}^2 &:= \begin{cases}
p^{a-(1-2\beta)} \hspace{1em} &a \leq 1-2\beta \\
\bigl(a-(1-2\beta)\bigr) \log p &1-2\beta < a \leq 1-4\beta/3 \\
2(\sqrt{1-a}-\sqrt{1-a-\beta})^2 \log p &1-4\beta/3 < a \leq 1-\beta \\
p^{a-(1-\beta)} &a > 1-\beta.
\end{cases}
\end{align}
A slightly different formula for the detection boundary arises when the direction of the change is unknown (Theorem \ref{main2}), which differs from the above formula only in the first case $(a \leq 1-2\beta)$.

The key methodological ideas used in this paper for proving feasibility throughout the detectable region are inspired by those found in \cite{chan2015optimal} and \cite{liu2021minimax}. Our test involves the penalization of a Berk--Jones type statistic (\cite{berk1979goodness}) that aggregates a statistic computed from each candidate changepoint location in a geometrically growing grid of locations. 
This feasibility result is complemented by an impossibility result in Section \ref{calclow}, where a least favorable prior distribution is constructed for each $(\rho,a,\beta)$ in the undetectable region. 

\subsection{Related literature in changepoint detection}
\label{sec-lit-review-changepoint}

Dating back to the middle of the $20^{\mathrm{th}}$ century, \cite{page1954continuous} studied changepoint detection for a single sequence of data. Since then, changepoint detection in a single data source has been the subject of intensive research, as surveyed in, for example, \cite{horvath2014}. For the single sequence problem with multiple changepoints, \cite{killick2012optimal} give an approach (PELT) based on penalized likelihood and minimum description length. An extension was studied by \cite{horvath1999testing}, who tested for changes in the mean in multiple data sequences with a dependence structure. In a similar setting, \cite{bai2010common} proposed a least squares method for estimating a change in the mean, and a method based on quasi-maximum likelihood that can also detect a change in the variance.

More recently, methods were developed for changepoint estimation in the high-dimensional setting when the fraction of sequences exhibiting a change is sparse, which is the setting of the current work. Examples based on cumulative summation techniques include \cite{cho2015multiple} and \cite{wang2018high}, the latter of which was extended to handle missing observations by \cite{follain2022high}. The idea of \cite{kovacs2023seeded}, \cite{liu2021minimax}, \cite{kovacs2020optimistic} and \cite{li2023robust} is to check for a change on a geometrically growing grid of candidate locations. This idea yields optimal procedures in a minimax sense; an overview is given by \cite{yu2020review}, \cite{verzelen2023optimal} and \cite{pilliat2023optimal}. \cite{enikeeva2019high} study a high-dimensional changepoint problem very similar to the one considered here, but assume an asymptotic regime yielding a different formula for the critical radius.

The focus of this paper is on offline changepoint detection, where the practitioner sees the entire dataset prior to determining whether or not a change has occurred; the corresponding online problem, where one observes data sequentially and seeks to declare a change as soon as possible after it has occurred, was studied by \cite{xie2013sequential}, \cite{chen2022high} and \cite{chen2024inference}.

\subsubsection{Organization of the paper} 
Our main results are stated in Section \ref{mainresults} for the asymptotic regime \eqref{3log}, and the detection boundary for another asymptotic regime is given in Section \ref{liudiscuss}. We show various connections to existing results in the signal detection literature in Section \ref{comparisons}. An overview of the lower bound construction and the test statistic that achieves the upper bound are described in Section \ref{overview}. All technical arguments are deferred to the supplemental file \cite{xiang2025supplement}.


\subsubsection{Notation} For $p \in \mathbb{N}$, we write $[p] \coloneqq \{1,\dots,p\}$. Given $a,b\in\R$, we write $a\vee b \coloneqq \max(a,b)$ and $a\wedge b \coloneqq \min(a,b)$.  For two positive sequences $a_n$ and $b_n$, we write $a_n\lesssim b_n$ and $a_n \leq O(b_n)$ to mean that there exists a constant $C > 0$ independent of $n$ such that $a_n\leq Cb_n$ for all $n$; moreover, $a_n \asymp b_n$ means $a_n\lesssim b_n$ and $b_n\lesssim a_n$. We overload notation by using $a_n \sim b_n$ to denote $a_n/b_n \to 1$, and $X \sim F$ to mean the random variable $X$ is distributed according to $F$; the intended usage of ``$\sim$'' will be made clear from context. We write $a_n = o(b_n)$ when $\lim_{n\to\infty} a_n/b_n = 0$ and $a_n = \omega(b_n)$ when $\liminf_{n\to\infty} a_n/b_n = \infty$. For a set $S$, we use $\textbf{1}_{S}$ and $|S|$ to denote its indicator function and cardinality respectively. For a vector $v = (v_1,\ldots,v_d) \in\mathbb{R}^d$, the squared norm is denoted $\|v\|^2 \coloneqq \sum_{\ell=1}^d v_\ell^2$. For a matrix $A \in \R^{p \times n}$ we let $A_{j:}$ denote its $j^{th}$ row, and $A_{:i}$ denotes its $i^{th}$ column. The notation $A_{j,1:t} \in \R^t$ denotes the first $t$ elements of the $j^{th}$ row in $A$. We use $\phi$ and $\Phi$ to denote the standard normal density and CDF, respectively, and put $\bar{\Phi}\coloneqq 1-\Phi$. The notation $\mathbb{P}$ and $\mathbb{E}$ are generic probability and expectation operators. In the context of testing $H_0$ against $H_1$, the notation $\P_0,\E_0,\var_0$ and $\P_1,\E_1,\var_1$ refer to the probability, expectation, and variance operators under $H_0$ and $H_1$ respectively. The letter $Z$ denotes a generic standard Gaussian random variable, whose dimensions will be made clear from context.

\section{Main results}
\label{mainresults}

Recall that our observation matrix is of the form
\begin{align}
\label{eq:observation-matrix}
X = \theta + E \in \R^{p \times n},
\end{align}
where the entries $(E_{jt})_{j \in [p],t \in [n]}$ of $E$ are independent standard Gaussians, and the mean matrix $\theta$ either has constant rows, or contains a shift in mean for a subset of the rows at a common column index. Formally, the null scenario is that each row of $\theta$ is constant, i.e. it belongs to 
\begin{align*}
\Theta_0(p,n) &\coloneqq \bigl\{ \theta = (\theta_{jt}) \in \R^{p \times n} : \exists \mu_1,\ldots,\mu_p \in \R \text{ s.t. $\theta_{jt}=\mu_j$ for all $j \in [p]$, $t \in [n]$}\bigr\}.
\end{align*}
We first consider the one-sided alternative where the change at $t^*$ must be strictly decreasing.

\subsection{One-sided changepoint} For a signal strength $\rho > 0$ and sparsity level $s \in [p]$, the ``one-sided'' alternative hypothesis parameter space is defined
\begin{align}
\nonumber
\Theta_1^{\mathrm{1-side}}(p,n,\rho,s) &\coloneqq  \bigg\{\theta \in \R^{p \times n}: \exists S \subseteq [p], \mu_{j1},\mu_{j2} \in \R, t^* \in [n-1] \text{ such that } |S|\leq s \text{ and } \\
\nonumber
&\sqrt{\frac{t^*(n-t^*)}{n}}(\mu_{j1}-\mu_{j2}) \geq \rho \text{ for every $j\in S$, where $\theta_{ji}= \mu_{j1}$ for $i\leq t^*$,} \\
\label{1sidecp}
&\text{and $\theta_{ji}=\mu_{j2}$ for $i > t^*$} \bigg\}.
\end{align} 
The factor $\frac{t^*(n-t^*)}{n}$ in the normalization of the signal is the reciprocal of the null variance of the natural test statistic for testing for a mean shift in row $j$ when the changepoint location~$t^*$ is known:
\begin{align*}
\var_0\biggl(\frac{1}{t^*} \sum_{t=1}^{t^*} X_{jt} - \frac{1}{n-t^*} \sum_{t=t^*+1}^n X_{jt}\biggr) = \frac{n}{t^*(n-t^*)}.
\end{align*}
This rescaling can therefore be viewed as an effective sample size for a given changepoint location.

The coordinatewise minimum signal requirement in~\eqref{1sidecp} contrasts with the corresponding condition of~\cite{liu2021minimax} and~\cite{li2023robust} which is expressed in terms of the (squared) Euclidean norm of the vector of mean change.  In order to derive a formula for the critical radius that is sharp up to multiplicative constants, we require the coordinate-wise formulation, which is standard in the detection boundary literature (\cite{donoho2004higher,cai2014optimal}). The resulting squared critical radius is related to the non-asymptotic one derived by \cite{liu2021minimax} by a factor of $s$, as elaborated upon in Section~\ref{liudiscuss}.

Let $\Psi$ denote the set of Borel measurable test functions $\psi: \R^{p \times n}\to \{0,1\}$.  The \emph{worst-case testing error} of $\psi \in \Psi$ for our one-sided problem is defined as 
\[
\mathcal{R}_{\mathrm{1-side}}(\psi,p,n,\rho,s) \coloneqq \sup_{\theta \in \Theta_0(p,n)} \E_\theta \psi(X) + \sup_{\theta \in \Theta_1^{\mathrm{1-side}}(p,n,\rho,s)} \E_\theta\{1-\psi(X)\}.
\]
The \emph{minimax testing error} is then
\begin{align}
\label{minmaxerr}
\mathcal{R}_{\mathrm{1-side}}(p,n,\rho,s) \coloneqq \inf_{\psi \in \Psi} \mathcal{R}_{\mathrm{1-side}}(\psi,p,n,\rho,s).
\end{align}
Recall the formula for $\rho_{\mathrm{1-side}}^*(a,\beta)$ from~\eqref{eq:one-sided-radius}.  The following theorem is our main result.
\begin{thm}
\label{main1}
For $a > 0$ and $\beta_1 \in (0,1)$, let $\rho^* \coloneqq \rho_{\mathrm{1-side}}^*(a,\beta_1)$, and suppose that $s,n,p$ are related via~\eqref{3log}.
\begin{enumerate}
\item (Lower Bound) If $\beta > \beta_1$, then $\mathcal{R}_{\mathrm{1-side}}(p,n,\rho^*,s) \not\to 0$ as $p \to \infty$.
\item (Upper Bound) If $\beta < \beta_1$, then $\mathcal{R}_{\mathrm{1-side}}(p,n,\rho^*,s) \to 0$ as $p \to \infty$.
\end{enumerate}
\end{thm}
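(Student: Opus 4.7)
The proof of Theorem~\ref{main1} splits into a lower bound---showing infeasibility of testing whenever $\beta > \beta_1$---and a matching upper bound via an explicit test. Both bounds proceed regime-by-regime, with four cases corresponding to the four pieces of $\rho^*_{\mathrm{1-side}}(a,\beta_1)$ in~\eqref{eq:one-sided-radius}.

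\textbf{Lower bound.} The plan is to construct a prior $\pi$ on $\Theta_1^{\mathrm{1-side}}(p,n,\rho^*,s)$ and show that $\chi^2(\P_\pi \,\|\, \P_0) \to 0$, where $\P_\pi = \int \P_\theta\,d\pi(\theta)$ and $\P_0$ is the distribution induced by $\theta=0 \in \Theta_0(p,n)$; this forces the minimax risk in~\eqref{minmaxerr} toward $1$. I would draw $\pi$ as follows: (i) pick $t^*$ uniformly from a dyadic grid $\{2^k : 0 \le k \le \lfloor \log_2(n-1)\rfloor\}$, which contains $\asymp \log n \asymp \exp(p^a)$ candidates; (ii) conditionally on $t^*$, pick the active set $S \subset [p]$ uniformly among subsets of size $s$; (iii) for $j \in S$, assign a single level-shift of magnitude $\rho^*/\sqrt{t^*(n-t^*)/n}$. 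Because $X$ is Gaussian, the chi-squared factorizes across rows and reduces to an inflated sparse normal-means problem in which the effective number of scanning positions is $\asymp p^a$. Matching the resulting moment calculations to the classical Ingster--Donoho--Jin thresholds yields the four cases: the dense case $a \le 1-2\beta_1$ follows from a direct second-moment expansion; the intermediate case $1-2\beta_1 < a \le 1-4\beta_1/3$ requires truncating the likelihood ratio; the sparse case $1-4\beta_1/3 < a \le 1-\beta_1$ exploits the Mills-ratio tail $\bar\Phi(\sqrt{2r\log p}) \asymp p^{-r}$; and the very sparse case $a > 1-\beta_1$ reduces, by a union-bound argument, to showing that even a single active coordinate at a single candidate changepoint is undetectable.

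\textbf{Upper bound.} For each candidate location $t$, define the row-wise CUSUM
\[
T_{jt} \coloneqq \sqrt{\frac{n-t}{tn}} \sum_{i=1}^t X_{ji} - \sqrt{\frac{t}{(n-t)n}} \sum_{i=t+1}^n X_{ji}, \qquad j \in [p],
\]
which is $N(0,1)$ under $H_0$ and satisfies $\E_\theta T_{jt^*} \ge \rho^*$ for $j \in S$ under $H_1$. For $t$ restricted to a geometric grid $\T \subset [n-1]$ of cardinality $|\T| \asymp \log n$, combine $T_{1t},\ldots,T_{pt}$ via a one-sided Berk--Jones-type statistic of the form $\BJ_t \coloneqq \max_{k \in [p]} \KL\bigl(k/p \,\|\, \bar\Phi(T_{(k),t})\bigr)$, where $T_{(k),t}$ is the $k$th largest of $\{T_{jt}\}_{j \in [p]}$. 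The final test is $\psi = \mathbf{1}\bigl\{\max_{t \in \T} \BJ_t > \tau\bigr\}$, where $\tau$ is chosen to absorb the scanning penalty $\log|\T| \asymp p^a$. Type~I error is controlled via the known uniform-empirical-process asymptotics of Berk--Jones statistics together with a union bound over $\T$; Type~II error is controlled by showing that at the grid element $\tilde t$ nearest the true $t^*$, the inherited signal in $\{T_{j\tilde t}\}_{j \in S}$ is still of order $\rho^*$, which in each of the four regimes is enough to push $\BJ_{\tilde t}$ above $\tau$. Because the maximum over $k$ inside $\BJ_t$ automatically selects the optimal order statistic index for the true sparsity, no knowledge of $\beta_1$ is needed, yielding the claimed adaptivity.

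\textbf{Main obstacle.} The hardest part is matching the four-piece critical radius sharply on both sides, especially across the breakpoints $a = 1-2\beta_1$, $a = 1-4\beta_1/3$, and $a = 1-\beta_1$. The scanning penalty $\asymp p^a$ effectively inflates the rejection threshold on $\BJ_t$ and thereby dictates the precise location of the phase transitions; symmetrically, on the lower-bound side the same inflation must be absorbed into the prior via truncation of the signal distribution. Near $a = 1-4\beta_1/3$ in particular, the dominant order statistic in the Berk--Jones maximum switches regime while the lower-bound prior must simultaneously switch between truncated and untruncated level-shifts. Performing these regime-by-regime calibrations so that upper and lower bounds meet at the same constant---not merely the same rate---is the principal technical difficulty.
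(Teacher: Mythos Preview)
Your overall architecture matches the paper: a Bayesian prior over random changepoint and random active set, analysed via likelihood-ratio moments, together with an upper bound via a Berk--Jones scan over a geometric grid of candidate locations. The upper-bound construction is essentially the penalized Berk--Jones test of~\eqref{pbj}, so that half is fine. On the lower-bound side, however, two of your choices diverge from the paper in ways that matter.

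\emph{Grid base.} With a dyadic grid the contrast vectors satisfy only $\langle\theta^{(k)},\theta^{(l)}\rangle \le 2^{-|k-l|/2}$ (Lemma~\ref{thetas}), so neighbouring contrasts have correlation $\approx 0.71$. In the regimes where $\rho^2 \to \infty$ (your Cases~2--4), the cross terms $\exp\bigl(\rho^2\langle\theta^{(k)},\theta^{(l)}\rangle\bigr)$ appearing in the second moment are then polynomially large for small $|k-l|$, and you would need a careful layered summation to show they are still dominated by $|\T|$. The paper sidesteps this by taking base $b=\log n$ for the lower-bound grid $\T'$ in~\eqref{lowerT}, which forces $\langle\theta^{(k)},\theta^{(l)}\rangle \le e^{-\frac12 p^{a(1+o(1))}}$ for every $k\ne l$; all off-diagonal terms are then $1+o(1)$ uniformly and the cross sum collapses in one line. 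Since both grids have cardinality $e^{p^{a(1+o(1))}}$, nothing is lost in the diagonal term.

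\emph{Which technique goes with which regime.} Your case-by-case assignment is shifted by one. The paper handles \emph{both} $a\le 1-2\beta_1$ and $1-2\beta_1<a\le 1-4\beta_1/3$ with the same untruncated second moment---no truncation is needed in the second range because, once $\bar\beta>\beta_1$, the diagonal contribution $\frac{1}{|\T|}\exp\bigl(p\bar\e^2(e^{\rho^2}-1)\bigr)$ already vanishes (both pieces of $\rho^2$ can be written as $\log(1+p^{a-(1-2\beta_1)})(1+o(1))$). It is the range $1-4\beta_1/3<a\le 1-\beta_1$ where the second moment genuinely blows up; there the paper does \emph{not} run a truncated-moment argument but instead lower-bounds the LRT error directly via $\P_0(\psi_{\LRT}=1)+\bar\P_1(\psi_{\LRT}=0)\ge \tfrac13\bar\P_1\bigl(\frac{f_1}{f_0}\le 3\bigr)$ (Lemma~\ref{lem-LRT-error-lower-bd}) and shows $\bar\P_1(L_{k^*}>|\T|)\to 0$ by partitioning the active rows into four sets $\Gamma_0,\ldots,\Gamma_3$ according to the size of $\langle X_{j:},\theta^{(k^*)}\rangle$. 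Finally, for $a>1-\beta_1$ the paper does not reduce to a single coordinate via a union bound; it switches to a \emph{deterministic} active set (the first $s$ rows) with only the changepoint random, and runs the second moment on that simpler prior.
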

The alternative parameter spaces are nested with respect to the sparsity level in the sense that if $s_1 \leq s_2$, then $\Theta_1^{\mathrm{1-side}}(p,n,\rho,s_1) \subseteq \Theta_1^{\mathrm{1-side}}(p,n,\rho,s_2)$. 
Consequently, the first part of Theorem \ref{main1} implies that $\rho^*_{\mathrm{1-side}}(a,\beta_1)$ is a lower bound on the minimum signal required for the existence of a sequence of tests with asymptotically negligible error when $\beta > \beta_1$. We call a sequence of tests $(\psi_p)$ \textit{consistent} when the sum of type 1 and 2 errors of $\psi_p$ tends to zero as $p \to \infty$. The second part of Theorem \ref{main1} states that a consistent sequence of tests exists when $\beta < \beta_1$ and the signal strength is $\rho^*_{\mathrm{1-side}}(a,\beta_1)$. 

\subsection{Two-sided changepoint} In the ``two-sided'' version of the problem, the direction of the shift in mean at the changepoint location can be decreasing or increasing, so the parameter space is
\begin{align}
\nonumber
\Theta_1^{\mathrm{2-side}}(p,n,\rho,s) &\coloneqq   \bigg\{\theta \in \R^{p \times n}: \exists S \subseteq [p], \mu_{j1},\mu_{j2} \in \R, t^* \in [n-1] \text{ such that } |S|\leq s \text{ and } \\
\nonumber
&\sqrt{\frac{t^*(n-t^*)}{n}}|\mu_{j1}-\mu_{j2}| \geq \rho \text{ for every $j\in S$, where $\theta_{ji} = \mu_{j1}$ for $i\leq t^*$,} \\
\label{2sidecp}
&\text{and $\theta_{ji}=\mu_{j2}$ for $i > t^*$} \bigg\}.
\end{align} 
Since $\Theta_1^{\mathrm{1-side}}(p,n,\rho,s) \subseteq \Theta_1^{\mathrm{2-side}}(p,n,\rho,s)$, the minimum signal size required for the existence of consistent tests is at least as large for the two-sided version of the problem.

Define 
\begin{align}
\label{eq:two-sided-boundary}
\rho_{\mathrm{2-side}}^*(a,\beta)^2 &\coloneqq \begin{cases}
p^{\frac{a-(1-2\beta)}{2}} \hspace{1em} &a \leq 1-2\beta \\
\bigl(a-(1-2\beta)\bigr) \log p &1-2\beta < a \leq 1-4\beta/3 \\
2\bigl(\sqrt{1-a}-\sqrt{1-a-\beta}\bigr)^2 \log p &1-4\beta/3 < a \leq 1-\beta \\
p^{a-(1-\beta)} &a > 1-\beta.
\end{cases}
\end{align}
Observe that $\rho_{\mathrm{2-side}}^*(a,\beta)$ is strictly larger than $\rho_{\mathrm{1-side}}^*(a,\beta)$ in the densest case $a < 1-2\beta$, and identical everywhere else. 
Putting
\begin{align*}
\mathcal{R}_{\mathrm{2-side}}(p,n,\rho,s)\coloneqq \inf_{\psi  \in \Psi} \biggl[\sup_{\theta \in \Theta_0(p,n)} \E_\theta \psi(X) + \sup_{\theta \in \Theta_1^{\mathrm{2-side}}(p,n,\rho,s)} \E_\theta\{1-\psi(X)\} \biggr],
\end{align*}
we have the following two-sided analogue of Theorem \ref{main1}.
\begin{thm}
\label{main2}
For $a > 0$ and $\beta_1 \in (0,1)$, let $\rho^* \coloneqq \rho_{\mathrm{2-side}}^*(a,\beta_1)$, and suppose that $s,n,p$ are related via \eqref{3log}.
\begin{enumerate}
\item (Lower Bound) If $\beta > \beta_1$, then $\mathcal{R}_{\mathrm{2-side}}(p,n,\rho,s) \not\to 0$ as $p \to \infty$.
\item (Upper Bound) If $\beta < \beta_1$, then $\mathcal{R}_{\mathrm{2-side}}(p,n,\rho,s) \to 0$ as $p \to \infty$.
\end{enumerate}
\end{thm}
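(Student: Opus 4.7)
The plan is to mirror the proof of Theorem~\ref{main1} wherever possible, via the inclusion $\Theta_1^{\mathrm{1-side}}(p,n,\rho,s) \subseteq \Theta_1^{\mathrm{2-side}}(p,n,\rho,s)$. Since $\rho^*_{\mathrm{2-side}}$ coincides with $\rho^*_{\mathrm{1-side}}$ in all but the dense regime $a \leq 1-2\beta$, the lower bound of Theorem~\ref{main2} in the moderate and sparse regimes is inherited verbatim from the one-sided lower bound (any prior supported on $\Theta_1^{\mathrm{1-side}}$ is a valid prior on $\Theta_1^{\mathrm{2-side}}$), and the upper bound in those regimes is obtained by replacing the Berk--Jones $p$-values $\bar{\Phi}(Y_j(t))$ used in Theorem~\ref{main1} by their two-sided analogues $2\bar{\Phi}(|Y_j(t)|)$, where $Y_j(t) \coloneqq \sqrt{t(n-t)/n}\,\bigl(\bar{X}_{j,1:t} - \bar{X}_{j,t+1:n}\bigr)$ is the row-wise CUSUM at candidate changepoint $t$. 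The symmetrization inflates the Gaussian tail by at most a factor of $2$, so the same penalization and geometrically growing grid of candidate changepoints yields consistency throughout the detectable region of~\eqref{eq:two-sided-boundary} outside the dense case.

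The new upper bound work is confined to the dense regime $a \leq 1-2\beta$, where the appropriate statistic is quadratic rather than rank-based. On a geometric grid $\mathcal{G} \subseteq [n-1]$ of size $|\mathcal{G}| \asymp \log n$, I would form $T_t \coloneqq \sum_{j=1}^p Y_j(t)^2$ for each $t \in \mathcal{G}$, and reject when $\max_{t \in \mathcal{G}}(T_t - p) > c\sqrt{p\log\log n}$ for a suitable constant $c > 0$. Under $H_0$ each $T_t$ is $\chi^2_p$, so a standard chi-squared deviation inequality combined with a union bound over the $O(\log n)$ grid points controls the type~I error. Under $H_1$ the nearest grid point to $t^*$ satisfies $\E_\theta T_t \geq p + c' s\rho^2$ for an absolute constant $c' > 0$, and the statistic concentrates with standard deviation $O(\sqrt{p})$; hence detection succeeds whenever $s\rho^2 \gg \sqrt{p\log\log n}$. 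Substituting $s \sim p^{1-\beta}$ and $\log\log n \sim p^a$ from~\eqref{3log} yields $\rho^2 \gg p^{(a-(1-2\beta))/2}$, matching the dense-regime formula in~\eqref{eq:two-sided-boundary}.

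The matching lower bound in the dense regime is genuinely new, since the one-sided prior is no longer tight when sign information is unavailable to the tester. I would take a Bayes prior that draws $t^*$ uniformly from the geometric grid, $S$ uniformly among size-$s$ subsets of $[p]$, and sets $\mu_{j1} - \mu_{j2} = \epsilon_j\rho\sqrt{n/(t^*(n-t^*))}$ with independent Rademacher signs $\epsilon_j$ for $j \in S$. Integrating the signs out of the likelihood ratio $L(X)$ via $\E_\epsilon e^{u\epsilon} = \cosh(u)$, and the supports out via a hypergeometric calculation, reduces $\E_0 L(X)^2$ to an expectation of $\cosh(\rho^2 K(t,t'))$ over independent grid draws $t,t'$, where $K(t,t')$ is a normalized inner product of CUSUM kernels. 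The diagonal contribution $t = t'$ is of size $\asymp 1/|\mathcal{G}|$ and governs the bound, giving $\E_0 L(X)^2 = 1 + o(1)$ whenever $s^2\rho^4 = o(p\log\log n)$, which by~\eqref{3log} is equivalent to $\rho^2 = o(p^{(a-(1-2\beta))/2})$; the lower bound then follows via Neyman--Pearson.

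The principal obstacle is controlling the off-diagonal contribution $t \neq t'$ in the second-moment calculation: the CUSUM kernels at distinct grid points are weakly correlated rather than orthogonal, so one must estimate $K(t,t')$ in terms of the ratio of grid locations and verify that the sum over off-diagonal pairs is negligible under $s^2\rho^4 = o(p\log\log n)$. The corresponding obstacle on the upper bound side is that $\max_{t \in \mathcal{G}} T_t$ is a maximum of dependent chi-squared variables, which can be handled as in~\citet{liu2021minimax} via a Lipschitz-in-$t$ bound on $Y_j(\cdot)$ combined with the coarseness of the geometric grid. The three other transitions in~\eqref{eq:two-sided-boundary} then follow by exponent matching between adjacent regimes, analogous to the one-sided case.
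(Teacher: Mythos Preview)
Your overall reduction strategy matches the paper exactly: inherit the lower bound from Theorem~\ref{main1} except in the dense case $a \leq 1-2\beta_1$, and prove the two-sided upper bound directly. For the dense lower bound your Rademacher-sign prior is also the paper's choice; the paper, however, uses a Bernoulli$(\bar\e)$ mixture over rows rather than a fixed size-$s$ subset. This makes the rows conditionally i.i.d.\ given $k^*$, so the second moment factorizes as $\prod_j[1+\bar\e^2(e^{\rho^4\langle\theta^{(k)},\theta^{(l)}\rangle^2/2}-1)]$ after the $\cosh(x)\leq e^{x^2/2}$ step. Your ``expectation of $\cosh(\rho^2 K(t,t'))$'' is not quite the right object (there is no single $\cosh$ but a product over the random overlap $|S\cap S'|$, which then needs a hypergeometric MGF bound); switching to the Bernoulli mixture avoids this complication and lands you on the same condition $p\bar\e^2\rho^4 = o(p^a)$, i.e.\ $\rho^2=o(p^{(a-(1-2\beta))/2})$, that you state.

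The genuine methodological difference is the dense-regime upper bound. You propose the $\chi^2$-sum $T_t=\sum_j Y_j(t)^2$, which is the \citet{liu2021minimax} statistic and does achieve the two-sided dense boundary by exactly the mean-vs-fluctuation comparison you wrote. The paper instead keeps the penalized Berk--Jones test uniformly across all regimes: for $a\le 1-2\beta_1$ it chooses $q=2\bar\Phi(\mu)$ with $\mu\to 0$ and obtains $\E_\theta\bar S_{p,\tilde k}(q)-q=\e(\bar\Phi(0)+\bar\Phi(2\mu)-2\bar\Phi(\mu))\sim \e\,\phi'(0)\mu^2$ via a second-order expansion (the first-order term cancels by symmetry, which is precisely why the exponent halves relative to the one-sided boundary), then applies Pinsker's inequality $K(x,q)\ge 2(x-q)^2$. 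Your $\chi^2$ route is shorter for the dense case alone; the paper's route buys a single adaptive test valid across all four regimes without having to combine two procedures.

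One small correction: the ``obstacle'' you flag for $\max_{t\in\mathcal G}T_t$ is not real. Under $H_0$ each $T_t$ is exactly $\chi^2_p$, so a plain union bound over $|\mathcal G|=O(\log n)$ grid points plus a sub-exponential tail bound controls Type~I; under $H_1$ you only analyze the single grid point nearest $t^*$. No Lipschitz-in-$t$ argument is needed here---the geometric grid and union bound already suffice, just as in the PBJ Type~I analysis.
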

The proofs of Theorem \ref{main1} and \ref{main2} can be found in Sections 1.1 -- 1.4 of the supplementary file (\cite{xiang2025supplement}).
In the next section, we outline several connections between the changepoint detection boundaries stated above and several important results in the sparse signal detection literature. 

\section{Connections to related works}
\label{comparisons}

\subsection{Related literature in sparse signal detection}
\label{sec-lit-review-signal-detection}

Suppose $\theta \in \mathbb{R}^p$ satisfies $\|\theta\|_0 \leq s$.  Given $X \sim N_p(\theta,I_p)$, a canonical problem in sparse signal detection involves testing $H_0:\theta = 0$ against $H_1: \|\theta\|_2 \geq \rho^*(p,s)$. \cite{collier2017minimax} showed that the critical radius takes the form
\begin{align}
\label{collier}
\rho^*(p,s)^2 \asymp \begin{cases}
\sqrt{p} \hspace{1em} &\text{if } s\geq \sqrt{p} \\
s\log (ep/s^2) &\text{if } s < \sqrt{p}.
\end{cases}
\end{align}

The work of \cite{collier2017minimax} was extended by \cite{liu2021minimax} to a high-dimensional changepoint detection problem, in which each element in the collection of $p$ observations is a sequence $X_{j:} \in \R^n$. In other words, the observed data is a matrix $X \in \R^{p \times n}$, where each $n$-dimensional row is a unit of observation. This is in contrast with another view of the changepoint detection problem, where each column $X_{:t} \in \R^p$ constitutes a unit of observation, yielding a total of $n$ observations. However, we prefer to use $p \to \infty$ as the limiting index in order to simplify the comparison of our result to existing results on sparse signal detection. 

In the high-dimensional changepoint detection problem studied by \cite{liu2021minimax}, the signal size of the matrix $\theta \in \R^{p \times n}$ is defined as a normalized mean shift at the changepoint location. This formulation results in a critical radius
\begin{align}
\label{haoyang}
\rho^*(p,n,s)^2 \asymp \begin{cases}
\sqrt{p\log \log (8n)} \hspace{1em} &s > \sqrt{p\log\log (8n)} \\
s\log\left(\frac{ep\log\log (8n)}{s^2} \right) + \log\log (8n) &s \leq \sqrt{p\log\log (8n)},
\end{cases}
\end{align}
that can be compared to (\ref{collier}). The two critical radii are nearly identical when the problem dimension $p$ in (\ref{collier}) is replaced by $p\log \log (8n)$. The additive iterated logarithmic term in the sparse regime 
arises from the one dimensional changepoint detection problem $p=s=1$, which has critical radius:
\begin{align}
\label{gaocp}
\rho^*(1,n,1)^2 &\asymp \log\log (8n)
\end{align}
(\cite{arias2011detection,gao2020estimation}). In other words, the sparse high dimensional changepoint problem is at least as difficult as the one-dimensional changepoint problem, and at least as difficult as the sparse normal means problem with an inflated problem dimension $p\log\log(8n)$. We comment on this inflation factor and its role in comparing $\rho^*(a,\beta)_{\rm 1-side}^2$ to the formula of \cite{donoho2004higher} in the next section.


\subsection{Connection to Ingster--Donoho--Jin boundary}
\label{idjdiscuss}

In order to derive the multiplicative constant in~\eqref{collier}, one specifies an asymptotic relationship between $p$ and $s$. To capture the elbow effect at $s = \sqrt{p}$ in~\eqref{collier}, a natural asymptotic regime to consider is
\begin{align*}
s \sim p^{1-\beta} 
\end{align*}
for some $\beta \in (0,1)$.  The asymptotic constant when $\beta > 1/2$ was first derived by \cite{ingster1999some} and \cite{donoho2004higher} in the following version of the problem, where the number of non-null coordinates is random under the alternative:
\begin{align}
\label{ingster}
H_0: X_1,\dots,X_p \stackrel{\iid}{\sim} N(0,1) \hspace{1em} \mathrm{versus} \hspace{1em} H_1: X_1,\dots,X_p \stackrel{\iid}{\sim} (1-\e) N(0,1)  + \e N(\mu,1).
\end{align}
Here, $\e \coloneqq p^{-\beta}$ is the expected fraction of non-null coordinates. 
\cite{ingster1999some} and \cite{tony2011optimal} showed that the critical signal strength is 
\begin{align}
\label{idjcritconst}
\mu^*(\beta)^2 := \begin{cases}
p^{2\beta-1} &\text{ if } 0 < \beta \leq 1/2 \\
(2\beta- 1)\log p \hspace{1em} &\text{ if } 1/2 < \beta \leq 3/4 \\
2(1-\sqrt{1-\beta})^2\log p &\text{ if } 3/4 < \beta < 1.
\end{cases}
\end{align}

By the Neyman--Pearson lemma, the likelihood ratio test achieves the smallest sum of Type I and II errors in the testing problem (\ref{ingster}), and is therefore consistent throughout the detectable region $\{(r,\beta) : r > r^*(\beta)\}$. However, an obstacle in implementing the likelihood ratio test is that it requires knowledge of $r$ and $\beta$. \cite{donoho2004higher} resolved this issue by introducing an adaptive testing procedure based on Higher Criticism (\cite{tukey1989higher}) which does not require knowledge of $r$ and $\beta$, and is consistent throughout the detectable region.  The formula for $\mu^*(\beta)$ is recovered by taking the limit as $a \searrow 0$ in our one-sided changepoint detection boundary~\eqref{eq:one-sided-radius}.


The two-sided alternative in the sparse normal means problem (\ref{ingster}) is
\begin{align}
\label{symsparsemeans}
H_1: X_1,\dots,X_p \stackrel{\iid}{\sim} (1-\e) N(0,1)  + \e \biggl(\frac{1}{2} N(\mu,1)+\frac{1}{2} N(-\mu,1) \biggr).
\end{align}
Taking the limit as $a \searrow 0$ in the two-sided detection boundary~\eqref{eq:two-sided-boundary} recovers the two-sided sparse normal means critical testing radius, derived in \cite{cai2014optimal}:
\begin{align}
\label{eq:cai-wu-bdry}
\lim_{a \searrow 0} \rho^*_{\mathrm{2-side}}(a,\beta)^2 = \mu_{\mathrm{2-side}}^*(\beta)^2 \coloneqq 
\begin{cases}
p^{\beta-1/2} \hspace{1em} &\text{ if } 0<\beta\leq 1/2\\
(2\beta-1)\log p &\text{ if } 1/2 < \beta \leq 3/4 \\
2(1-\sqrt{1-\beta})^2 \log p &\text{ if } 3/4 < \beta < 1.
\end{cases}
\end{align}
The two detection boundaries are identical except in the dense case $\beta \in (0,1/2)$, where the factor of 2 difference (in the exponent) can be attributed to the symmetry of the problem (\cite{cai2014optimal}). Inspecting the formulae for $\rho^*_{\mathrm{1-side}}(a,\beta)$ and $\rho^*_{\mathrm{2-side}}(a,\beta)$ for general $a$, we see that the same phenomenon occurs in the dense regime $a \leq 1-2\beta$ of the high-dimensional changepoint problem.

In the dense regime, the detection boundary for the sparse means problem is related to that of the changepoint problem by a dimension inflation factor $p\log\log(8n) \sim p^{1+a}$, as mentioned in the previous section. Consider the total signal in both the changepoint problem and the sparse means problem, 
\begin{align*}
    s \rho^{*}_{\mathrm{2-side}}(a,\beta) = p^{1/2}, \hspace{1em} s \mu^*_{\mathrm{2-side}}(\beta) = p^{\frac{1+a}{2}}.
\end{align*}
Comparing the two, we find that replacing $p \mapsto p\log\log(8n)$ in the formula for the total signal in the sparse means problem yields the total signal in the changepoint problem. 
The formula of $\rho^*(a, \beta)^2$ in the other regimes does not appear to be easily derived from $\mu_{\mathrm{2-side}}^*(\beta)^2$ using the rule $p\mapsto p\log\log(8n)$.


\subsection{Connection to \cite{chan2015optimal}}
\label{chandiscuss}

\cite{chan2015optimal} studied a high-dimensional changepoint problem with multiple changepoints.  These authors test the global null that $\theta = 0$ in the model~\eqref{eq:observation-matrix}, against the alternative that there exist positive integers $q,\ell^{(1)},\ldots,\ell^{(q)}$, as well as $s^{(1)} < s^{(1)} + \ell^{(1)} < s^{(2)} < s^{(2)} + \ell^{(2)} < \cdots < s^{(q)} < s^{(q)} + \ell^{(q)} \leq~n$ and independent random variables $(I_j^{(k)})_{j \in [p],k \in [q]}$ and $(E_{jt})_{j \in [p],t \in [n]}$ with $I_j^{(k)} \sim \mathrm{Bern}(\varepsilon^{(k)})$ and $E_{jt} \sim N(0,1)$, such that 
\[
X_{jt} = \frac{\mu^{(k)}I_j^{(k)}}{\sqrt{\ell^{(k)}}} \textbf{1}_{\{t \in (s^{(k)},\; s^{(k)} + \ell^{(k)}]\}} + E_{jt} 
\]
for $j \in [p]$ and $t \in [n]$, where $\mu^{(1)},\ldots,\mu^{(q)} > 0$.  The authors consider the asymptotic regime 
\begin{align}
\label{eq:chan-walther-asymp}
\log\log \Bigl(\frac{n}{\ell^{(k)}}\Bigr) \sim a^{(k)}\log p
\end{align}
where $a^{(k)} > 0$ for $k \in [q]$.  The detection boundary $\rho^*_{\mathrm{1-side}}(a,\beta)$ for our one-sided changepoint problem appears as a critical radius for this problem in the following sense.
\begin{thm}[\cite{chan2015optimal}]
\label{chan}
Let $\mu^{(k)} \coloneqq \rho^*_{\mathrm{1-side}}(a^{(k)},\beta^{(k)})$ for $k \in [q]$. 
\begin{enumerate}
\item (Lower bound) For each $k \in [q]$, put $\e^{(k)} \coloneqq p^{-\beta^{(k)}-\d^{(k)}}$ for some $0 < \beta^{(k)} < 1$, and $\d^{(k)} > 0$. Then there does not exist a sequence of tests with Type I and II errors tending to zero as $p\to\infty$.
\item (Upper bound) For each $k \in [q]$, put $\e^{(k)} \coloneqq p^{-\beta^{(k)}+\d^{(k)}}$ for some $0 < \beta^{(k)} < 1$, and $0 < \d^{(k)} \leq \beta^{(k)}$. Then there exists a sequence of tests with Type I and II errors tending to zero as $p \to \infty$.
\end{enumerate}
\end{thm}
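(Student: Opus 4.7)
The overall strategy is to decouple the $q$ scales: since the intervals $(s^{(k)},s^{(k)}+\ell^{(k)}]$ are disjoint and the noise $E$ has independent columns, the subproblem at each scale $k$ reduces---after averaging the $\ell^{(k)}$ columns of a candidate interval into a single sufficient statistic---to a sparse normal-means detection problem in dimension $p$ with signal strength $\mu^{(k)}$, Bernoulli$(\e^{(k)})$ sparsity, and location uncertainty $\asymp n/\ell^{(k)}$. This is precisely the subproblem whose critical radius is $\rho^*_{\mathrm{1-side}}(a^{(k)},\beta^{(k)})$ by Theorem~\ref{main1}, via the identification $\log\log(n/\ell^{(k)}) \sim a^{(k)}\log p$.

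For the lower bound, I would restrict attention to a product prior $\pi = \pi^{(1)}\otimes\cdots\otimes\pi^{(q)}$ in which the interval at scale $k$ is placed uniformly inside its own pre-assigned sub-block of $[n]$ of size $\asymp n/q$ (enforcing the ordering constraint automatically while preserving $\log\log(n/\ell^{(k)})\sim a^{(k)}\log p$ up to a harmless constant), and $(I_j^{(k)})_{j\in[p]}$ are drawn i.i.d.\ $\mathrm{Bern}(\e^{(k)})$ independently across $k$. Because the scale-$k$ signal lives only on the columns of sub-block $k$ and $\P_0$ is a column-wise product, the likelihood ratio $dP_\pi/d\P_0$ factorizes as $\prod_k L^{(k)}$ with the $L^{(k)}$ mutually independent under $\P_0$, yielding
\[
1 + \chi^2(P_\pi \,\|\, \P_0) \;=\; \prod_{k=1}^q \bigl(1 + \chi^2(P_{\pi^{(k)}} \,\|\, \P_0^{(k)})\bigr).
\]
The choice $\e^{(k)} = p^{-\beta^{(k)}-\d^{(k)}}$ corresponds to effective sparsity index $\beta^{(k)}+\d^{(k)}$, and strict monotonicity of $\rho^*_{\mathrm{1-side}}$ in its second argument gives $\mu^{(k)} = \rho^*_{\mathrm{1-side}}(a^{(k)},\beta^{(k)}) < \rho^*_{\mathrm{1-side}}(a^{(k)},\beta^{(k)}+\d^{(k)})$. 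Each factor on the right is therefore $1+o(1)$ by the moment-generating-function calculation used in the lower-bound proof of Theorem~\ref{main1} (Section~\ref{calclow}), and since $q$ is fixed the product is $1+o(1)$, ruling out consistent tests by Le~Cam's two-point method.

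For the upper bound, I would use a multi-scale scanning test built from the penalized Berk--Jones statistic of Section~\ref{calcupp}. Let $\mathcal{L}$ be a geometric grid of scales covering $[1,n]$; for each $\ell\in\mathcal{L}$ and starting position $s\in[n-\ell]$, form the row-wise bump averages $Y_j(s,\ell) \coloneqq \ell^{-1/2}\sum_{t=s+1}^{s+\ell} X_{jt}$, plug $(Y_1(s,\ell),\ldots,Y_p(s,\ell))$ into the penalized Berk--Jones statistic $T_{\BJ}$, and set $T_\ell \coloneqq \max_{s\in[n-\ell]} T_{\BJ}(Y_1(s,\ell),\ldots,Y_p(s,\ell))$. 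The final test rejects if $T_\ell > c_\ell$ for some $\ell\in\mathcal{L}$, with scale-dependent thresholds chosen so that a union bound over $|\mathcal{L}|=O(\log n)$ scales keeps Type~I error at $o(1)$. Under $H_1$, at the $\ell\in\mathcal{L}$ closest to $\ell^{(k)}$ up to a constant factor the vector $(Y_j(s^{(k)},\ell))_{j\in[p]}$ is distributed as a sparse normal-means observation with signal strength $\asymp \mu^{(k)}$ and effective sparsity index $\beta^{(k)}-\d^{(k)}$; since $\mu^{(k)} > \rho^*_{\mathrm{1-side}}(a^{(k)},\beta^{(k)}-\d^{(k)})$, the consistency analysis of Section~\ref{calcupp} yields Type~II error $o(1)$ for at least one scale. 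The test is adaptive in that it requires no knowledge of $q$, $a^{(k)}$ or $\beta^{(k)}$.

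The hardest step is the clean decoupling of the $q$ scales for the lower bound: the constraint $s^{(k)}+\ell^{(k)}<s^{(k+1)}$ a priori couples the scales, so a truly product prior is infeasible. The fixed sub-block device circumvents this at the cost of shrinking the candidate-location count per scale by the factor $q$, which is harmless because the detection boundary depends on $n/\ell^{(k)}$ only through the slowly varying quantity $\log\log(n/\ell^{(k)})$, so the asymptotic identification needed to invoke Theorem~\ref{main1} at each scale remains valid.
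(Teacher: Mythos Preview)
This theorem is a citation of \citet{chan2015optimal}; the present paper does not prove it, so there is no in-paper argument to compare your proposal against. The only information the paper provides about the original proof is the remark in Section~\ref{chandiscuss} that in \citet{chan2015optimal} the lower bound is established using a construction with $q=1$, which is why the subsequent discussion restricts to that case. Your upper-bound sketch is in the same spirit as the penalized Berk--Jones scan the paper attributes to \citet{chan2015optimal} and develops for its own Theorems~\ref{main1}--\ref{main3}.

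Your lower-bound route---a product prior across $q$ disjoint sub-blocks together with a tensorized $\chi^2$-divergence---is more elaborate than the reported $q=1$ construction, and it has a genuine gap. You invoke ``the moment-generating-function calculation used in the lower-bound proof of Theorem~\ref{main1}'' to conclude that each factor $1+\chi^2(P_{\pi^{(k)}}\|\P_0^{(k)})$ is $1+o(1)$, but in the regime $1-4\beta^{(k)}/3 < a^{(k)} \le 1-\beta^{(k)}$ (Case~3 of Appendix~\ref{LBmain1}) the paper does \emph{not} control the second moment of the likelihood ratio at all; it instead bounds the LRT error directly via Lemma~\ref{lem-LRT-error-lower-bd} and the row-partition $\Gamma_0,\ldots,\Gamma_3$. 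In that regime the untruncated second moment is expected to diverge (as in the classical Ingster--Donoho--Jin problem for $\beta>3/4$), so your product formula breaks down whenever any scale $k$ lands there. A fix is either to tensorize a quantity that is controlled in all four regimes---e.g.\ a truncated second moment or Hellinger affinity---or to follow the $q=1$ reduction the paper attributes to \citet{chan2015optimal}, which avoids tensorization altogether.
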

To lessen the notational burden, and to make a direct comparison to the problem analyzed in this paper, we only consider the case $q=1$. In \cite{chan2015optimal}, the ``Lower bound'' of Theorem \ref{chan} is proved using a construction with $q=1$, which implies that this restriction is without loss of generality. In this case, if the $j^{th}$ row of $\theta$ is non-null, there is one contiguous region $S_n \subseteq[n]$ over which the mean vector $\theta_{j:}$ is elevated, i.e. $\theta_{jt} > 0$ for $t \in S_n$, and $\theta_{jt} = 0$ for $t \notin S_n$.

According to the asymptotic regime \eqref{eq:chan-walther-asymp}, the length $\ell_n^{(1)} = |S_n|$ of the elevated region is determined by the relation,
\begin{align*}
\log\log\left(\frac{n}{\ell_n} \right) \sim a^{(1)} \log p.
\end{align*}
In other words, the mean vector is broken into three pieces, two of which consist only of zeros. The middle piece is positive and has length calibrated by $n$ and $p$ as above. For example, if $n/\ell_n \sim \log n$, then the length $\ell_n$ of the elevated region is a vanishing fraction of $n$.

In our formulation of the problem, the mean matrix is non-random, and each non-null row has two pieces. The length of the first piece is equal to the changepoint location $t^* \in [n-1]$, as defined in the parameter spaces $\Theta_1^{\mathrm{1-side}}$ and $\Theta_1^{\mathrm{2-side}}$. In terms of the critical signal size, the two piece problem with known direction of change coincides in difficulty with the three piece problem whose elevated region occurs between $\lfloor n- n/\log n\rfloor$ and $n$.

\subsubsection{Minor technical points}

In the lower bound construction for Theorem \ref{chan}, the changepoint location is selected uniformly at random from $n/\ell_n$ candidate positions in $[n-\ell_n]$. The likelihood ratio for this construction involves a sum of independent random variables, which the authors analyzed using the Lyapunov Central Limit Theorem. In the lower bound construction for Theorem \ref{main1}, a changepoint location is selected uniformly from a set of candidate positions $\T \subseteq[n]$. Since the elevated segment starts at index 1 and ends at the changepoint $t^* \in \T$, the resulting elevated segments (corresponding to the candidate changepoint locations $t^* \in \T$) overlap. As a result, the likelihood ratio for this construction involves a sum of dependent random variables, for which the CLT cannot be directly applied. Instead, we use a direct argument similar to the proof of Theorem 2(a) in \cite{hu2023likelihood} to handle this case. 

The upper bounds in Theorems \ref{main1}, \ref{main2}, and \ref{main3} are attained by a procedure which considers the maximum over a collection of Berk--Jones statistics, each computed with respect to a candidate elevated segment in the non-null rows of $\theta$. This maximum is then penalized by subtracting a quantity depending on the cardinality of the grid of candidate changepoint locations. The construction of an optimal test in the current paper uses a geometrically growing changepoint locations similar to those used in the optimal test for achieving the lower bound in Theorem 1 of \cite{liu2021minimax}. The ideas from the procedures in \cite{chan2015optimal} and \cite{liu2021minimax}, and how they are combined to yield optimal procedures for the testing problems considered in the current paper, are detailed in Section \ref{calcupp}.

\subsubsection{Multiple changepoints}

The comparison to \cite{chan2015optimal} suggests that our calibration $\log\log\log n \sim a\log p$ is a result of the alternative hypothesis having only one changepoint. Though not explicitly considered in \cite{chan2015optimal}, one could expect that the calibration $\log\log n \sim a\log p$ is appropriate
when the alternative hypothesis involves multiple changepoints. This difference between single and multiple changepoints has been previously studied by \cite{pilliat2023optimal}.

In order to illustrate this contrast, we extend our result in Theorem \ref{main2} to the case where there are two changepoints in the mean matrix. The null hypothesis is the same:
\begin{align*}
    H_0: \theta_{j1}=\dots=\theta_{jn} \hspace{1em} \text{ for all }j=1,\dots,p.
\end{align*}
In other words, $\theta\in\Theta_0(p,n)$ under null.
The alternative hypothesis $H_1:\theta\in \Theta_2(p,n,\rho,s)$ considers the parameter space
\begin{align*}
    \Theta_2(p,n,\rho,s)&:= \Big\{\theta\in\mathbb{R}^{p\times n}: \exists \mu^{(1)},\mu^{(2)},\mu^{(3)} \in \R^p, t_1 < t_2 \in [n-1], \text{ and } S \subseteq [p] \text{ with } |S| \leq s \\
    & \text{ such that for }j\in[p],  \theta_{jt}=\mu_{j}^{(1)} \text{ for all } t \leq t_1, \theta_{jt}=\mu_{j}^{(2)} \text{ for all } t \in (t_1,t_2] \text{ and } \theta_{jt} = \mu_{j}^{(3)}  \\
    & \text{ for all } t > t_2,  \text{while } \Delta^{(1)} \coloneqq \mu^{(2)}-\mu^{(1)} \text{ and } \Delta^{(2)} \coloneqq \mu^{(3)}-\mu^{(2)} \text{ satisfy } \Delta^{(1)}_j = \Delta^{(2)}_j = 0 \\
    & \text{ for all } j \in S^c, \text{ and for each $j \in S$, } \|\Sigma_{t_1,t_2}^{-1/2} (\Delta^{(1)}_j, \Delta^{(2)}_j)^\top \|^2 \geq \rho^2\Big\},
\end{align*}
where $\Sigma_{t_1,t_2} \in \R^{2 \times 2}$ is defined by
\begin{align*}
    \Sigma_{t_1,t_2} 
    &= \begin{bmatrix}
    \frac{1}{t_1}+\frac{1}{t_2-t_1} & \; - \frac{1}{t_2-t_1} \\ \\
    -\frac{1}{t_2-t_1} & \; \frac{1}{n-t_2} + \frac{1}{t_2-t_1}
    \end{bmatrix}.
\end{align*}
We briefly motivate the form of the signal requirement in the definition of the alternative hypothesis before stating our result. 
For simplicity, consider testing whether a single row $j$ is non-null, i.e. there exist integers $1\leq t_1<t_2 \leq n-1$ for which the mean vector $\theta_{j:}$ satisfies the conditions in $H_1$. A natural test statistic for checking whether there is a mean change at given $t_1$ and $t_2$ is given by
\begin{align*}
    Y_{t_1,t_2}\coloneqq \left( \frac{1}{t_2-t_1}\sum_{i=t_1+1}^{t_2}X_{ji}- \frac{1}{t_1}\sum_{i=1}^{t_1}X_{ji},\; \; \frac{1}{n-t_2}\sum_{t_2+1}^{n} X_{ji} - \frac{1}{t_2-t_1}\sum_{i=t_1+1}^{t_2}X_{ji}  \right)^\top \in \R^2.
\end{align*}
Suppose $t_1,t_2$ are the actual changepoint locations. Then the distribution of  $Y_{t_1,t_2}$ is given by
\begin{align*}
\Sigma_{t_1,t_2}^{-1/2} Y_{t_1,t_2} \sim N(\Sigma^{-1/2}(\Delta_j^{(1)},\Delta_j^{(2)})^\top,I_2),
\end{align*}
where $I_2$ is the $2 \times 2$ identity matrix. Therefore, the signal requirement is naturally inspired by using the chi-squared statistic $\|\Sigma_{t_1,t_2}^{-1/2} Y_{t_1,t_2}\|^2$ to detect changepoints at locations $t_1$ and $t_2$. Our next result states that the formula \eqref{eq:two-sided-boundary} for the two-sided detection boundary characterizes the critical signal size in this testing problem. We record a proof of the lower and upper bounds in Sections 1.7 and 1.8 of the supplemental file. 

\begin{thm}
\label{thm:two-changepoints}
For $a > 0$ and $\beta_1 \in (0,1)$, let $\rho^* \coloneqq \rho_{\mathrm{2-side}}^*(a,\beta_1)$, and suppose that $s,n,p$ are related via
\begin{equation}
\begin{aligned}
\log\log n &\sim a\log p\\
s &\sim p^{1-\beta},
\end{aligned}
\end{equation}
where $a>0$ and $\beta \in (0,1)$. Then the following statements hold.
\begin{enumerate}
\item (Lower Bound) If $\beta > \beta_1$, then 
\begin{align*}
    \inf_{\psi  \in \Psi} \biggl[\sup_{\theta \in \Theta_0(p,n)} \E_\theta \psi(X) + \sup_{\theta \in \Theta_2(p,n,\rho,s)} \E_\theta\{1-\psi(X)\} \biggr] \not\to 0
\end{align*}
as $p \to \infty$.
\item (Upper Bound) If $\beta < \beta_1$, then 
\begin{align*}
    \inf_{\psi  \in \Psi} \biggl[\sup_{\theta \in \Theta_0(p,n)} \E_\theta \psi(X) + \sup_{\theta \in \Theta_2(p,n,\rho,s)} \E_\theta\{1-\psi(X)\} \biggr] \to 0
\end{align*}
as $p \to \infty$.
\end{enumerate}
\end{thm}

\begin{remark}
Theorem \ref{thm:two-changepoints} can be extended to handle $K=O(1)$ changepoints. The proof would be the same, once we modify the covariance matrix $\Sigma_{t_1\cdots t_K}$ appropriately. Assuming the changes occur at $t_1,\dots,t_{K}$, the new covariance matrix in the definition of $H_1$ has non-zero entries along the diagonal and adjacent to the diagonal, and is zero otherwise:
\begin{align*}
\Sigma_{t_1\cdots t_K} \coloneqq \begin{bmatrix}
    \frac{1}{t_1} + \frac{1}{t_2-t_1} & -\frac{1}{t_2-t_1} & 0 & 0 & \cdots & 0 \\ \\
    -\frac{1}{t_2-t_1} & \frac{1}{t_2-t_1} + \frac{1}{t_3-t_2} & - \frac{1}{t_3-t_2} & 0 & \cdots & 0 \\ \\
    0 & -\frac{1}{t_3-t_2} & \ddots & \ddots & & \vdots \\ \\
    0 & 0 & \ddots &  &  & 0 \\ \\
    \vdots & \vdots &  &  & \ddots & -\frac{1}{t_K-t_{K-1}} \\ \\
    0 & 0 & \cdots  & 0 & -\frac{1}{t_K-t_{K-1}} & \frac{1}{t_K-t_{K-1}} + \frac{1}{n-t_{K}} 
\end{bmatrix}    
\end{align*}
For this version of the problem, the detection boundary is also the same as in the setting with a single changepoint. To avoid redundancy, we omit the details of this result.
\end{remark}





\subsection{Connection to the non-asymptotic rate of \cite{liu2021minimax}}
\label{liudiscuss}

\cite{liu2021minimax} define the critical radius (\ref{haoyang}) with respect to the Euclidean norm of the $p$ dimensional vector of mean differences at the changepoint location $t^* \in [n-1]$, 
\begin{align*}
\Theta^{(t^*)}(p,n,\rho,s) \coloneqq \bigg\{&\theta =(\theta_1,\dots,\theta_n) \in \R^{p \times n}: \theta_{t} = \mu_1 \text{ for some } \mu_1 \in \R^p \text{ for all }1 \leq t \leq t^*, \\ &\theta_t = \mu_2 \text{ for some } \mu_2 \in \R^p \text{ for all } t^* + 1 \leq t \leq n, \\ &\|\mu_1-\mu_2\|_0 \leq s, \sqrt{\frac{t^*(n-t^*)}{n}}\|\mu_1-\mu_2\| \geq \rho \bigg\}.
\end{align*}
Taking the union over all possible changepoint locations $t^* \in [n-1]$ gives the alternative hypothesis parameter space,
\begin{align*}
\Theta_1(p,n,\rho,s) \coloneqq \bigcup_{t^*=1}^{n-1} \Theta^{(t^*)}(p,n,\rho,s).
\end{align*}
The null parameter space is the same as the one considered in the current paper, $\Theta_0(p,n)$. The problem is to test,
\begin{align*}
H_0: \theta \in \Theta_0(p,n) \hspace{2em} \mathrm{versus} \hspace{2em} H_1: \theta \in \Theta_1(p,n,\rho,s),
\end{align*}
for which it is shown that (\ref{haoyang}) is the detection boundary. Note that the signal size $\rho$ in $\Theta_1(p,n,\rho,s)$ is defined with respect to the Euclidean norm of the entire vector (in $\R^p$) of changes, as opposed to the signal size required on each individual non-null row. To obtain a quantity comparable to the critical radius $\rho^*_{\mathrm{2-side}}$, we divide expression (\ref{haoyang}) by $s$ for the minimum required signal in each non-null row,
\begin{align}
\label{hy}
\frac{\rho^*(p,n,s)^2}{s} &\asymp \begin{cases}
\frac{\sqrt{p\log \log (8n)}}{s} \hspace{1em} &s > \sqrt{p\log\log (8n)} \\
\log\left(\frac{ep\log\log (8n)}{s^2} \right) + \frac{\log\log (8n)}{s} &s \leq \sqrt{p\log\log (8n)}.
\end{cases}
\end{align}
According to the calibration, 
\begin{align*}
\log\log\log n &\sim a \log p,\\
s &\sim p^{1-\beta},
\end{align*}
the ``sparse'' case, $s \leq \sqrt{p\log\log n} \iff 1+a-2(1-\beta)\geq 0$, in (\ref{hy}) can be further split into the following cases,
\begin{align*}
\log\left(\frac{ep\log\log n}{s^2} \right) + \frac{\log\log n}{s} \asymp \begin{cases}
\log\left( ep^{1+a-2(1-\beta)} \right) \hspace{1em} &a - (1-\beta) \leq 0\\
p^{a-(1-\beta)} &a-(1-\beta) > 0.
\end{cases}
\end{align*}
Then, rewriting (\ref{hy}) to include this further division, and ignoring constants, we have
\begin{align}
\label{hybdry}
\frac{\rho^*(p,n,s)^2}{s} \asymp \begin{cases}
p^{\frac{a-(1-2\beta)}{2}} \hspace{1em} &a \leq 1-2\beta \\
\log p &1-2\beta < a \leq 1-\beta\\
p^{a-(1-\beta)} &a>1-\beta,
\end{cases}
\end{align}
which coincides with the formula $\rho^*_{\mathrm{2-side}}(a,\beta)^2$ up to the coefficient of the $\log p$ rate in the case $1-2\beta < a \leq 1-\beta$. 

In the $s=1$ case, two competing terms in the non-asymptotic rate are $\log p$ and $\log\log n$, suggesting an asymptotic regime in which $\log\log n$ does not need to grow as fast as polynomial in $p$, in order for a nontrivial interaction to arise. This asymptotic regime yields another detection boundary, which we discuss next.

\subsubsection{Another asymptotic regime}
Another asymptotic calibration we consider is
\begin{equation}
\label{2log}
\begin{aligned}
\log\log n &\sim a\log p \hspace{2em} a > 0 \\
s &\sim p^{1-\beta} \hspace{2em} \frac{1}{2} < \beta \leq 1.
\end{aligned}
\end{equation}
The corresponding detection boundary is almost exactly the same as the IDJ-detection-boundary, with the changepoint aspect of the problem only playing a role in the most sparse regime $\beta=1$. The formula is $\rho_2^*(a,\beta) \coloneqq \sqrt{2r_2^*(a,\beta) \log p}$, where
\begin{align*}
r_2^*(a,\beta) &\coloneqq \begin{cases}
\beta-1/2 \hspace{1em} &1/2 < \beta \leq 3/4 \\
(1-\sqrt{1-\beta})^2 &3/4<\beta < 1\\
1+a &\beta = 1.
\end{cases}
\end{align*}
We only state the result for the one-sided version of the problem, since there is no difference between the one-sided and two-sided detection boundaries when $\beta > 1/2$. The squared critical radius for the two-sided problem in the case $\beta \leq 1/2$ is equal to $p^{\beta-1/2}$, which is directly implied by plugging $s=p^{1-\beta}$ and $\log\log n \asymp \log p$ into the non-asymptotic rate (\ref{hy}). 

In the formula $r_2^*(a,\beta)$, the case in which the boundary departs from the Ingster--Donoho--Jin boundary is when $\beta = 1$, corresponding to the case in which the number of non-null rows in $\theta$ is bounded as $p \to \infty$. Taking $a\searrow 0$ in the expression for $r^*_2(a,\beta)$ recovers the Ingster--Donoho--Jin boundary.
\begin{thm}
\label{main3}
For $a>0$ and $\beta \in (1/2,1]$, let $\rho \coloneqq \sqrt{2r\log p}$ for some $r > 0$, and suppose that $s,n,p$ are related via \eqref{2log}.
\begin{enumerate}
\item (Lower Bound) If $r<r_2^*(a,\beta)$, then $\mathcal{R}_{\mathrm{1-side}}(p,n,\rho,s) \not\to 0$, as $p \to \infty$.
\item (Upper Bound) If $r>r_2^*(a,\beta)$, then $\mathcal{R}_{\mathrm{1-side}}(p,n,\rho,s) \to 0$, as $p \to \infty$.
\end{enumerate}
\end{thm}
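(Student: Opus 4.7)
The plan is to follow the same blueprint used for Theorems \ref{main1} and \ref{main2}, but adapted to the weaker calibration $\log\log n \sim a\log p$. Under \eqref{2log}, $\log n \sim p^a$ is only singly exponential in $p$, so the search over changepoint locations contributes only a factor of the form $\log p$ to the effective problem dimension. This is why the changepoint aspect of the problem is invisible in the detection boundary when $\beta < 1$ (absorbed into lower-order corrections of $\log p$) and is visible only in the sparsest regime $\beta = 1$, where $s$ is bounded and the multiplicity over $t^*$ contributes the additive $a\log p$ term.

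For the lower bound when $\beta \in (1/2, 1)$, I would reduce to the standard Ingster--Donoho--Jin sparse normal means problem by restricting $\Theta_1^{\mathrm{1-side}}(p,n,\rho,s)$ to the sub-parameter space on which $t^* = \lfloor n/2 \rfloor$ is fixed. On this sub-parameter space, the row-wise CUSUM statistics $Y_j = \sqrt{t^*(n-t^*)/n}\,(\bar{X}_{j,1:t^*} - \bar{X}_{j,t^*+1:n})$ are sufficient, independent across rows, and exactly $N(\rho,1)$ on $s$ signal rows and $N(0,1)$ on the rest. This is precisely the sparse normal means problem with sparsity level $s \sim p^{1-\beta}$, for which the IDJ lower bound (the ordinary second-moment argument for $1/2 < \beta \leq 3/4$, and a truncated version for $3/4 < \beta < 1$; see \citet{ingster1999some} and \citet{tony2011optimal}) yields the critical radius $2 r^*(\beta)\log p$. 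For the $\beta = 1$ case, the IDJ reduction is suboptimal (it would only give $2\log p$) and the multiplicity over $t^*$ must be exploited. I would use a Bernoulli-$p^{-1}$ prior over which single row is non-null, combined with a uniform prior on a dyadic grid $\mathcal{T} \subseteq [n-1]$ with $|\mathcal{T}| \asymp \log n \sim p^a$. The resulting second moment takes the form
\[
\E_0 L^2 = \frac{1}{|\mathcal{T}|^2} \sum_{t_1, t_2 \in \mathcal{T}} \bigl[1 + p^{-2}\bigl(e^{\langle \theta^{(t_1)}, \theta^{(t_2)}\rangle} - 1\bigr)\bigr]^p,
\]
where, up to constants, $\langle \theta^{(t_1)}, \theta^{(t_2)}\rangle \approx \rho^2 \sqrt{(t_1 \wedge t_2)/(t_1 \vee t_2)}$, which along the dyadic grid decays geometrically in $|\log(t_1/t_2)|$. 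As in the IDJ bounded-$s$ regime, the naive second moment gives only half the correct constant and a truncated-likelihood-ratio argument in the spirit of \citet{hu2021likelihood} is required; the truncation threshold is chosen at $\sqrt{2(1+a)\log p}$ so that the diagonal term $t_1 = t_2$ no longer dominates while the geometric decay of the overlaps keeps the off-diagonal sum convergent.

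For the upper bound, I would construct an adaptive test along the lines of the Berk--Jones-plus-geometric-grid recipe used for Theorems \ref{main1} and \ref{main2}. Form a geometric grid $\mathcal{T}$ of candidate changepoints with $|\mathcal{T}| = O(\log n) = O(p^a)$; for each $t \in \mathcal{T}$, compute CUSUM statistics $\{Y_j^{(t)}\}_{j=1}^p$, and form a Berk--Jones-type statistic $T_p^{(t)}$ on the empirical distribution of these values. Reject when $\max_{t \in \mathcal{T}} T_p^{(t)}$ exceeds a threshold that includes a penalty of order $\log|\mathcal{T}| = O(a\log p)$. Under $H_1$, some $t$ on the grid is close enough to the true $t^*$ that the effective per-row signal on $S$ is asymptotically $\rho(1+o(1))$, so that $T_p^{(t)}$ detects the sparse normal means structure at signal strength $\rho$. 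For $\beta \in (1/2, 1)$ this recovers the IDJ rate $2 r^*(\beta)\log p$ since the $\log|\mathcal{T}|$ penalty is negligible, and for $\beta = 1$ the penalty precisely supplies the extra $2a\log p$ beyond the single-row IDJ rate $2\log p$.

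The main obstacle is the lower bound for $\beta = 1$: as in the sparsest IDJ regime, the naive second moment is off by a factor of $2$, and a careful truncation is needed to obtain the sharp constant. The truncation must be compatible with the grid structure of the prior so that the truncated likelihood ratio remains close to the untruncated one in $L^1$ under $H_1$, and after truncation the diagonal $t_1 = t_2$ contribution to the second moment is controlled. Because the geometric grid forces the off-diagonal inner products to decay exponentially, those terms always sum to a constant; the real challenge is in the truncated estimate of the diagonal. The upper bound also requires controlling the null maximum of a collection of Berk--Jones statistics that are dependent across $t \in \mathcal{T}$, but standard union-bound arguments combined with the concentration of Berk--Jones under the null should suffice.
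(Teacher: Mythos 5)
Your overall architecture matches the paper's: reduce to classical IDJ with a fixed changepoint for $\beta\in(1/2,1)$, use a single-spike-row prior crossed with a uniform prior on a geometric grid of size $p^{a(1+o(1))}$ plus a truncated second moment for the $\beta=1$ lower bound, and a penalized scan over a geometric grid for the upper bound. However, there are two concrete gaps, the first of which is serious.

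First, the upper bound at $\beta=1$ does not follow from the penalized Berk--Jones test alone, and your claim that the $2\log|\mathcal{T}|$ penalty ``precisely supplies'' the extra $2a\log p$ is not correct. To control the Type I error of $\max_{k\le|\mathcal{T}|}\max_{j\le p} pK(j/p,p_{k,(j)})$ by a union bound, the concentration exponent for the $j$-th order statistic is only $(1-1/j)s$ (Lemma~\ref{chernoff}), which forces a threshold of order $4\log p+2\log|\mathcal{T}|$ rather than $2\log p + 2\log|\mathcal{T}|$. Under the alternative with a single non-null row, the best the statistic can do is roughly $pK(1/p,p_{(1)})\approx\log\bigl(1/(p\,p_{(1)})\bigr)\approx(r-1)\log p$, which for $r$ slightly above $1+a$ falls far short of $(4+2a)\log p$. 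This is precisely why the paper introduces a separate max statistic $\psi_{\max}$, thresholded at $\sqrt{(2+\gamma)\log(p|\mathcal{T}|)}$, and takes the combined test $\psi_{\PBJ}\vee\psi_{\max}$; the max test attains the sharp constant $2(1+a)\log p$ while $\psi_{\PBJ}$ handles $\beta\in(1/2,1)$. Without this (or an order-statistic-dependent threshold), your test is inconsistent throughout $1+a<r<5+2a$.

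Second, in the $\beta=1$ lower bound your dyadic grid makes adjacent overlaps $\langle\theta^{(t_1)},\theta^{(t_2)}\rangle\approx 2^{-1/2}$, a constant, so the near-diagonal terms of the second moment carry factors $e^{\rho^2 2^{-d/2}}=p^{2r\cdot 2^{-d/2}}$ that are polynomially large in $p$; the assertion that the off-diagonal terms ``always sum to a constant'' is false, and those terms would need the same truncated treatment as the diagonal. The paper sidesteps this by taking the grid base $b=\log n\sim p^{a}$, so that \emph{every} off-diagonal overlap is at most $b^{-1/2}=p^{-a/2(1+o(1))}=o(1/\rho^2)$ and every off-diagonal term is $1+o(1)$, while the cardinality $\log n/\log\log n=p^{a(1+o(1))}$ is unchanged. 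Your reduction for $\beta\in(1/2,1)$ (fixing $t^*=\lfloor n/2\rfloor$ and projecting onto the CUSUM direction) is fine and is essentially the paper's argument with a different fixed changepoint.
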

The detection boundary $\rho^*_2(a,\beta)$ does not depend continuously on $a$ as $\beta \to 1$, and this discontinuity arises in the non-asymptotic rate (\ref{hy}) as well. This can be understood by considering the case $\beta < 1$, where $\log\log n = o(s)$, so there is no contribution from the additive $\frac{\log\log n}{s} \sim \frac{a\log p}{p^{1-\beta}}$ term in the limit as $p\to \infty$, and thus $a$ has no contribution to the formula for the detection boundary. In contrast, when $\beta = 1$, this term becomes $\frac{\log\log n}{s} \sim a\log p$, which is non-negligible compared to the competing term $\log \left(\frac{ep\log\log n}{s^2}\right) \sim \log p$. Taking $\beta=1$ gives
\begin{align*}
\rho^*_2(a,1)^2 = 2(1+a)\log p \sim 2(\log p + \log\log n),
\end{align*}
which is the minimax testing radius for the $s=1$ high dimensional changepoint detection problem, with multiplicative constant 2. Taking $a \to \infty$ when $\beta=s=1$ gives
\begin{align*}
\rho^*_2(a,1)^2 = 2(1+a)\log p \sim 2(\log p + \log\log n) = 2(1+o(1))\log\log n.
\end{align*}
Hence the sharp constant of 2 in the $p=s=1$ changepoint detection problem (\cite{verzelen2023optimal}) is consistent with Theorem \ref{main3}. The proof of Theorem \ref{main3} can be found in Sections 1.5 and 1.6 of the supplemental file.

\subsection{Connection to submatrix detection in \cite{butucea2013detection}}
\label{butudiscuss}

\cite{butucea2013detection} studied a related problem in which the entire observation is a matrix $Y \in \R^{N \times M}$ of unit variance Gaussian entries. Under the null, these entries have zero mean. Under the alternative, a submatrix $(Y_{ij})_{i \in A, j \in B}$ has elevated mean $s_{ij} \geq \rho$ for $(i,j) \in A \times B$ and zero mean $s_{ij} = 0$ for $(i,j) \notin A \times B \subseteq[N] \times [M]$. $A$ and $B$ are constrained to be of sizes $n\leq N$ and $m\leq M$ respectively. In this notation, the testing problem can be written,
\begin{align*}
&H_0 : s_{ij} = 0 \text{ for all }i=1,\dots,N, \text{ and } j =1,\dots, M \\
&H_1: \exists A \times B\in \mathcal{C}_{nm} \text{ such that } s_{ij} = 0 \text{ if }(i,j) \notin A \times B, \text{ and }  s_{ij} \geq \rho \text{ if } (i,j) \in A \times B,
\end{align*}
where $\mathcal{C}_{nm}$ is the collection of rectangles $A \times B \subseteq[N] \times [M]$ satisfying $|A|=n$ and $|B|=m$. \cite{butucea2013detection} established both the rate and constant for the minimax separation rate $\rho$, in a specific asymptotic regime which requires,
\begin{align}
\label{ingsterasymp}
n \log (N/n) \asymp m \log(M/m).
\end{align}
The main result of \cite{butucea2013detection} states that
the minimax separation $\rho$ satisfies
\begin{align}
\label{butucea}
\rho^2 =  \frac{2(n\log(N/n)+m\log(M/m))}{nm}(1+o(1)).
\end{align}
The purpose of this section is to show how formula \eqref{butucea} coincides with the critical radius $\rho^*_{\mathrm{1-side}}$ in a specific case of the asymptotic regime \eqref{3log}. In what follows, we make a comparison between the changepoint problem and the submatrix detection problem with $m=1$, which can be solved using the method in this paper. Specifically, our method can be viewed as having aggregated a Berk-Jones statistic from each of the $M$ columns to test whether one of these columns has a sparse subset of entries with an elevated mean.

The proof of the lower bound in Theorems \ref{main1} and \ref{main2} relies on the hardness of distinguishing between the following simple hypotheses for small enough $\rho$:
\begin{align*}
&H_0 : (X_{j:})_{j\in[p]} \sim \prod_{j=1}^p N(0,I_n)\\
&H_1: (X_{j:})_{j \in [p]} \sim \frac{1}{|\T'|} \sum_{k=1}^{|\T'|} \prod_{j=1}^p \left[(1-\e)N(0,I_n)+\e N(\rho\theta^{(k)},I_n) \right],
\end{align*}
where $\varepsilon$ represents the sparsity rate $s/p\sim p^{-\beta}$, $\theta^{(k)}$ is a piece-wise constant vector with a single changepoint at $t_k \coloneqq \lfloor (\log n)^k\rfloor$ defined by the relation \eqref{def:theta-k} (for an explicit definition of $\theta^{(k)}$, see Lemma 2.1 in Section 2 of the supplemental file), 
and $\mathcal{T}' \subseteq [n]$ is a set of candidate changepoint locations with cardinality of order $(\log n)^{1+o(1)}$, defined in \eqref{lowerT}. The likelihood ratio for this testing problem is 
\begin{align*}
\frac{f_1}{f_0}(X) = \frac{1}{|\T'|} \sum_{k=1}^{|\T'|} \prod_{j=1}^p \left[ 1+ \e \left(e^{\rho Y_{jt_k}-\rho^2/2}-1 \right)\right],
\end{align*}
which only depends on $X$ via the contrasts 
\begin{align}
\label{def:theta-k}
Y_{jt_k} = \langle X_{j:},\theta^{(k)} \rangle = \sqrt{\frac{\lfloor b^k \rfloor (n-\lfloor b^k \rfloor)}{n}} \left(\frac{1}{\lfloor b^k \rfloor} \sum_{t=1}^{\lfloor b^k \rfloor} X_{jt} -\frac{1}{n-\lfloor b^k \rfloor} \sum_{t=n-\lfloor b^k \rfloor }^{n} X_{jt}\right)
\end{align}
where $j=1,\dots,p$ and $k = 1,\dots, |\T'|$, $b = \log n$, and the second equality holds by definition of $\theta^{(k)}$ (see Part 1 of Lemma 2.1 in Section 2 of the supplemental file). 
In other words, these contrasts are sufficient for computing the likelihood ratio test $\psi_{\LRT}$ (assuming $\e$ and $\rho$ are known). 
Since the true changepoint location is drawn uniformly at random from $\T'$, the contrasts can be written,
\begin{align*}
Y_{jt_k} &= \langle X_{j:},\theta^{(k)}\rangle = \langle \rho \theta^{(k^*)} + Z_{j:},\theta^{(k)} \rangle = \rho \langle \theta^{(k^*)},\theta^{(k)}\rangle + \langle Z_{j:},\theta^{(k)}\rangle .
\end{align*}
Letting $k^* \sim \mathrm{Unif}\{1,\dots,|\T'|\}$ denote the index of the true changepoint location, we have
\begin{align*}
Y_{jt_k} &\stackrel{(d)}{=} \begin{cases}
\rho + Z \hspace{1em} &\text{if } k^*=k \\
o(1) + Z &\text{if } k^*\neq k,
\end{cases}
\end{align*}
since the grid $\T'$ has base $b = \log n = e^{p^{a(1+o(1))}}$ in the current regime (\ref{3log}) (see Parts 2 and 3 of Lemma 2.1 in the supplemental file), where $Z \sim N(0,1)$. Further, the covariance between two contrasts $Y_{jt_k}$ and $Y_{jt_\ell}$ corresponding to row $j$ is
\begin{align*}
\cov(Y_{jt_k},Y_{jt_\ell}) &= \langle \theta^{(k)},\theta^{(\ell)} \rangle = \begin{cases}
1 \hspace{1em} &\text{if }k=\ell \\
O(b^{-\frac{|k-\ell|}{2}}) &\text{if } k\neq \ell,
\end{cases}
\end{align*}
by Part 3 of Lemma 2.1 of the supplemental file. 
In other words, the $(Y_{jt_k})$ are nearly independent Gaussians, and under $H_1$, there exists some $k^* \leq |\T'|$ for which $(Y_{jt_{k^*}})_{j\in [p]}$ have elevated mean $\rho$, and all other $(Y_{jt_k})_{j\in [p], k\neq k^*}$ are Gaussian variables that have unit variance and nearly zero mean.

Consider the matrix $Y \in \R^{p \times |\T'|}$ whose $(j,k)$ entry is $Y_{jt_k}$. By the previous observation, the $(Y_{jt_k})$ are approximately independent, and the testing problem (\ref{lbconstruction0}) versus (\ref{lbconstruction1}) is essentially testing between the hypotheses
\begin{align*}
&H_0 : (Y_{j:})_{j \in [p]} \sim \prod_{j=1}^{p} N(0,I_{|\T'|}) \\
&H_1: (Y_{j:})_{j \in [p]} \sim \frac{1}{|\T'|} \sum_{k=1}^{|\T'|} \prod_{j=1}^p \left[(1-\e)N(0,I_{|\T'|})+\e N(\rho e_{k},I_{|\T'|}) \right],
\end{align*}
where $e_k \in \R^{|\T'|}$ is the $k^{th}$ standard basis vector. Let $I_j \in \{0,1\}$ indicate whether or not row $j$ in $Y$ has non-zero mean $\rho e_k$, i.e. $I_j = 1 \iff$ row $j$ is non-null.  Under $H_1$, there is some rectangle 
\begin{align}
\label{skinnyrect}
\{j \leq p : I_j = 1\} \times \{k\} \subseteq[p] \times [|\T'|],
\end{align}
over which the entries $Y_{j t_k}$ have elevated mean $\rho > 0$. This problem is a special case of the submatrix detection problem studied by \cite{butucea2013detection}.

In our setting, the rectangles (\ref{skinnyrect}) and matrix of contrasts $(Y_{jk}) \coloneqq (Y_{jt_k})$ have dimensions 
\begin{align*}
    (n,m,N,M) \coloneqq (p^{1-\beta},1,p,e^{p^{a(1+o(1))}}).
\end{align*}
Then condition (\ref{ingsterasymp}) corresponds to $1-\beta = a$, but in general does not cover the range of parameters $(a,\beta)$ considered in our paper. The result (\ref{butucea}) then reduces to
\begin{align*}
\rho^2 = \frac{2 (p^{1-\beta} \beta \log p + p^{a})}{p^{1-\beta}} = 2\beta (1+o(1))\log p,
\end{align*}
which coincides with the detection boundary in the case $a = 1-\beta$,
\begin{align*}
\rho^*_{\mathrm{1-side}}(1-\beta,\beta) = 2\beta \log p.
\end{align*}

\section{Overview of calculations}
\label{overview}

In this section, we provide a high-level overview of the proof for the one-sided changepoint problem in the asymptotic calibration \eqref{3log}. All rigorous proofs are deferred to the supplementary material. 

\subsection{Upper bound}
\label{calcupp}
To derive the upper bound result of Theorems \ref{main1} and \ref{main3}, it suffices to provide a testing procedure with Type I and II error tending to zero as $p\to \infty$. If the location $t^*\in [n-1]$ of the changepoint were known, one could compute a contrast for each row $j \in [p]$,
\begin{align}
\label{ycontrast}
Y_{jt^*} \coloneqq \sqrt{\frac{t^*(n-t^*)}{n}} \left(\frac{1}{t^*} \sum_{t=1}^{t^*}X_{jt} - \frac{1}{n-t^*} \sum_{t=t^*+1}^{n} X_{jt} \right).
\end{align}
Since $Y_{jt} \sim N(0,1)$ under $H_0$ for any $t \in [n-1]$, we can count the number of contrasts exceeding a null quantile $\bar{\Phi}^{-1}(q)$ for some $q \in (0,1)$, and compare the fraction of large signals to the expected fraction $q$ under the null,
\begin{align*}
\bar{S}_{p,t^*}(q) &\coloneqq \frac{1}{p} \sum_{j=1}^p \textbf{1}_{\{Y_{jt^*} > \bar{\Phi}^{-1}(q)\}} \\
\E_0 \bar{S}_{p,t}(q) &= q \text{ for any } t \in [n-1].
\end{align*}
A measure of probabilistic dissimilarity of these two fractions is $K(\bar{S}_{p,t^*}(q),q)$, where the function $K : (0,1)^2 \to \R$ is defined,
\begin{align}
\label{KL-function}
K(x,t) \coloneqq x\log \frac{x}{t} + (1-x)\log\frac{1-x}{1-t},
\end{align}
which is the KL divergence between two Bernoulli distributions with success parameters $x$ and $t$, i.e. $K(x,t) = \KL(\mathrm{Bern}(x),\mathrm{Bernoulli}(t))$. Hence, the statistic $K(\bar{S}_{p,t^*}(q),q)$ is large when the observed proportion differs greatly the expected proportion under the null. 
The supremum of the quantity $K(\bar{S}_{p,t^*}(q),q)$ over all $q \in (0,1)$ is achieved at some maximizing $p$-value, reducing this supremum to a finite maximum \cite{berk1979goodness},
\begin{align}
\label{BJ1}
\sup_{q \in (0,1)} K(\bar{S}_{p,t^*}(q),q) = \max_{j \leq p} K(j/p,p_{(j)}),
\end{align}
where $p_{(1)} < p_{(2)} < \dots < p_{(p)}$ are the ordered $p$-values $p_j \coloneqq \bar{\Phi}(Y_{jt^*})$. Since the true changepoint location $t^*$ is typically not known a priori, we might consider using as a test-statistic the maximum of (\ref{BJ1}) over all possible locations $t^* \in [n-1]$. However, the elevated regions corresponding to the changepoint locations $t^*$ and $t^*+1$ are almost completely overlapping. For each $j$, the $(Y_{jt^*})_{t^* \in [n-1]}$ are highly dependent over $t^* \in [n-1]$, which may lead to a violation of type I error control. A similar issue arises in the non-asymptotic analysis of the problem, where \cite{liu2021minimax} overcome this obstacle by taking a restricted maximum over a grid of geometrically growing candidate changepoint locations,
\begin{align*}
\{1,2,4,\dots,2^{\lfloor \log_2 (n/2) \rfloor}\} \subseteq[n-1].
\end{align*}
In order to derive the multiplicative constant appearing in the formula $\rho^*_{\mathrm{1-side}}(a,\beta)$, we use a dense and symmetric version of the above grid which ensures that for any true changepoint location $t^* \in [n-1]$, there exists an element of the grid close to $t^*$. The grid is defined as
\begin{align*}
\T \coloneqq \left\{\lfloor (1+\delta)^0\rfloor,\lfloor (1+\delta)^1\rfloor,\dots,\lfloor (1+\delta)^{\log_{1+\delta}\frac{n}{2}}\rfloor,\lfloor n-(1+\delta)^{\log_{1+\delta} \frac{n}{2}}\rfloor,\dots,\lfloor n-(1+\delta)^0\rfloor \right\},
\end{align*}
where $\delta = \frac{1}{\log\log n} \to 0$. The final test statistic is a penalized maximum over this grid,
\begin{align}
\label{pbj}
\PBJ_p \coloneqq \left[\max_{t \in \T} \sup_{q \in (0,1)} p K(\bar{S}_{p,t}(q),q)\right] - 2\log |\T|,
\end{align}
and the testing procedure rejects when this test statistic exceeds a threshold,
\begin{align*}
    \psi_{\PBJ}(X) &= \begin{cases}
        1 \hspace{1em} &\text{if }\PBJ_p > 2(2+\gamma) \log p \\
        0 &\mathrm{else},
    \end{cases}
\end{align*}
where $\gamma>0$ is any positive constant. The two-sided test is exactly the same, except the count variable $\bar{S}_{p,t}(q)$ is defined to count the number of exceedances in either the positive or negative direction $\sum_{j=1}^p \textbf{1}_{\{|Y_{jt}|>\bar{\Phi}^{-1}(q/2)\}}$.

A similar testing procedure was developed by \cite{chan2015optimal} in a related changepoint detection problem discussed in Section \ref{chandiscuss}. The penalty term $2\log |\T|$ appearing in \eqref{pbj} is needed for type 1 error control, shown in Section 1.3 of the supplemental file.

\subsection{Lower bound}
\label{calclow}
A geometrically growing grid (similar to $\T$ from the upper bound) plays a key role in the lower bound construction for each of Theorems \ref{main1} and \ref{main2}. The true changepoint is first selected uniformly at random from this grid, and conditional on this selection, the rows of $X$ are distributed iid from a mixture distribution. Note that the cardinality of the grid $\T$ from the proof of the upper bound behaves as,
\begin{align*}
|\T| \asymp \log_{1+\delta} n = \frac{\log n}{\log(1+\delta)} \asymp \frac{\log n}{(\log\log n)^{-1}},
\end{align*}
since $\log (1+x) \asymp x$ as $x \to 0$. Thus, the dominant behavior is determined by the factor $\log n \asymp e^{p^{a(1+o(1))}}$. This exponentially large quantity is 
the correct order of magnitude for the number of candidate changepoint locations in the grid needed for the lower bound on $\rho^*_{\mathrm{1-side}}$ to match the upper bound. However, computing a contrast as in (\ref{ycontrast}) for $t^* \in \T$ results in a collection of dependent variables $(Y_{jt^*})$. In order to ensure a normal limiting distribution for a sum of dependent terms involving the variables $(Y_{jt^*})$, it suffices to dampen the dependence between these variables by further spacing out the candidate changepoint locations. In order to retain tightness of the lower bound construction,
this dampening needs to be done while maintaining the overall exponential order of $e^{p^{a(1+o(1))}}$ many candidate changepoint locations. This is accomplished by using the grid,
\begin{align}
\label{lowerT}
\T' \coloneqq \left\{\lfloor (\log n)^0 \rfloor, \lfloor (\log n)^1 \rfloor,\dots,\lfloor (\log n)^{\log_{\log n}(n-1)} \rfloor\right\}.
\end{align}
Ignoring the symmetry of the grid $\T$, both of the grids $\T$ and $\T'$ are (roughly) of the form $\{b^0,b^1,\dots,b^{\log_b n}\}$ with bases $b = 1+\delta$ and $b=\log n$ corresponding to $\T$ and $\T'$ respectively. The cardinality of $\T'$ behaves as
\begin{align*}
|\T'| \asymp \log_{\log n} n = \frac{\log n}{\log\log n},
\end{align*}
meaning that the dominant behavior of $|\T'|$ is still determined by the factor $\log n \asymp e^{p^{a(1+o(1))}}$. 

The worst case testing error in (\ref{minmaxerr}) is lower bounded by the Bayes testing error of the following simple vs simple testing problem,
\begin{align}
\label{lbconstruction0}
&H_0 : (X_{j:})_{j\in[p]} \sim \prod_{j=1}^p N(0,I_n)\\
\label{lbconstruction1}
&H_1: (X_{j:})_{j \in [p]} \sim \frac{1}{|\T'|} \sum_{k=1}^{|\T'|} \prod_{j=1}^p \left[(1-\e)N(0,I_n)+\e N(\rho\theta^{(k)},I_n) \right],
\end{align}
where $\theta^{(k)} \in \R^n$ is the unique vector associated to the contrast (\ref{ycontrast}). That is, $\langle X_{j:},\theta^{(k)}\rangle = Y_{j t_k}$, where $t_k \coloneqq \lfloor (\log n)^k\rfloor \in \T'$ is the $k^{th}$ grid element, and $\e=p^{-\beta}$ is the average fraction of non-null rows\footnote{In order for the realization of $\theta$ drawn in $H_1$ to be supported on $\Theta_1^{\mathrm{1-side}}(p,n,\rho,s)$ with high probability, $\e$ should technically be replaced with $\bar{\e}=p^{-\bar{\beta}}$ for some slightly smaller $\bar{\beta} < \beta$, where `slightly smaller' is described more precisely in Section 1.1 of the supplemental file.}.
\begin{remark}
\label{caiwuremark}
A general framework for deriving the detection boundary in an iid testing problem of the form,
\begin{align*}
H_0: Y_i \stackrel{\iid}{\sim} Q_n \hspace{1em} \mathrm{versus} \hspace{1em} H_1: Y_i \stackrel{\iid}{\sim} (1-\e_n) Q_n + \e_n G_n, \tag{$i=1,\dots,n$}
\end{align*}
where $Q_n$ and $G_n$ are distributions on $\R$, can be found in the paper by \cite{cai2014optimal}. We note that the high dimensional changepoint problem considered here falls outside of this setting. Indeed, in the Bayesian two-point formulation (\ref{lbconstruction0}) versus (\ref{lbconstruction1}), each row $X_{j:}$ in the observed matrix $X \in \R^{p \times n}$ is not a scalar random variable, and the rows $(X_{j:})_{j\leq p}$ are not independent. In this fomulation, the true changepoint location $k^* \in \T'$ is drawn uniformly from a set of candidate locations. Marginalizing over the uniformly drawn changepoint location yields a distribution on $X$ for which the rows $(X_{j:})_{j\leq p}$ are dependent.
\end{remark}
An optimal procedure for the simple-versus-simple testing problem in (\ref{lbconstruction0}) and (\ref{lbconstruction1}) is one that thresholds the likelihood ratio
\begin{align*}
\psi_{\LRT}(X) = \textbf{1}\left\{\frac{f_1}{f_0}(X) > 1\right\}.
\end{align*}
Thus, the detection boundary can be derived by studying the asymptotic behavior of $\frac{f_1}{f_0}(X)$, where $f_0$ and $f_1$ are the densities associated with the distributions in $H_0$ and $H_1$ respectively. For most cases in the argument, this involves computing the second moment of the likelihood ratio
and choosing the signal size $\rho$ small enough for this moment to be no larger than the typical size of $\frac{f_1}{f_0}(X)$ under the $H_0$. In this setting, the test $\psi_{\LRT}$ would be powerless in detecting a deviation from the null, implying the failure of an optimal test and thus the hardness of the testing problem. In the case $1-4\beta/3 < a  \leq 1-\beta$, we used a different argument based on truncation; the details of this calculation can be found in Case 3 of Section 1.1 of the supplemental file.



\paragraph{Acknowledgements.} The authors would like to thank Richard Samworth for thoughtful discussions and for his help in improving the organization of the paper.

\paragraph{Funding.}
The research of CG was supported in part by NSF Grants ECCS-2216912 and DMS-2310769, NSF Career Award DMS-1847590, and an Alfred Sloan fellowship.


\bibliographystyle{apalike}
\bibliography{reference}

\begin{thebibliography}{}

\bibitem[Arias-Castro et~al., 2011]{arias2011detection}
Arias-Castro, E., Candes, E.~J., and Durand, A. (2011).
\newblock Detection of an anomalous cluster in a network.
\newblock {\em Ann. Statist.}, \textbf{39}:278--304.

\bibitem[Arias-Castro and Ying, 2019]{arias2019detection}
Arias-Castro, E. and Ying, A. (2019).
\newblock Detection of sparse mixtures: Higher criticism and scan statistic.
\newblock {\em Electron. J. Stat.}, \textbf{13}:208--230.

\bibitem[Aston and Kirch, 2012a]{aston2012detecting}
Aston, J.~A. and Kirch, C. (2012a).
\newblock Detecting and estimating changes in dependent functional data.
\newblock {\em J. Multivariate Anal.}, \textbf{109}:204--220.

\bibitem[Aston and Kirch, 2012b]{aston2012evaluating}
Aston, J.~A. and Kirch, C. (2012b).
\newblock Evaluating stationarity via change-point alternatives with
  applications to fmri data.
\newblock {\em Ann. Appl. Stat.}, \textbf{6}:1906--1948.

\bibitem[Bai, 2010]{bai2010common}
Bai, J. (2010).
\newblock Common breaks in means and variances for panel data.
\newblock {\em J. Econom.}, \textbf{157}:78--92.

\bibitem[Berk and Jones, 1979]{berk1979goodness}
Berk, R.~H. and Jones, D.~H. (1979).
\newblock Goodness-of-fit test statistics that dominate the kolmogorov
  statistics.
\newblock {\em Z. Wahrsch. Verw. Gebiete}, \textbf{47}:47--59.

\bibitem[Birg{\'e}, 2001]{birge2001alternative}
Birg{\'e}, L. (2001).
\newblock An alternative point of view on lepski's method.
\newblock {\em Lecture Notes-Monograph Series}, pages 113--133.

\bibitem[Bleakley and Vert, 2011]{bleakley2011group}
Bleakley, K. and Vert, J.-P. (2011).
\newblock The group fused lasso for multiple change-point detection.
\newblock {\em arXiv preprint arXiv:1106.4199}.

\bibitem[Butucea and Ingster, 2013]{butucea2013detection}
Butucea, C. and Ingster, Y.~I. (2013).
\newblock Detection of a sparse submatrix of a high-dimensional noisy matrix.
\newblock {\em Bernoulli}, \textbf{19}:2652--2688.

\bibitem[Cai et~al., 2011]{tony2011optimal}
Cai, T.~T., Jeng, J.~X., and Jin, J. (2011).
\newblock Optimal detection of heterogeneous and heteroscedastic mixtures.
\newblock {\em J. R. Stat. Soc. Ser. B. Stat. Methodol.}, \textbf{73}:629--662.

\bibitem[Cai and Wu, 2014]{cai2014optimal}
Cai, T.~T. and Wu, Y. (2014).
\newblock Optimal detection of sparse mixtures against a given null
  distribution.
\newblock {\em IEEE Trans. Inform. Theory}, \textbf{60}:2217--2232.

\bibitem[Chan and Walther, 2015]{chan2015optimal}
Chan, H.~P. and Walther, G. (2015).
\newblock Optimal detection of multi-sample aligned sparse signals.
\newblock {\em Ann. Statist.}, \textbf{43}:1865--1895.

\bibitem[Chen et~al., 2022]{chen2022high}
Chen, Y., Wang, T., and Samworth, R.~J. (2022).
\newblock High-dimensional, multiscale online changepoint detection.
\newblock {\em J. R. Stat. Soc. Ser. B. Stat. Methodol.}, \textbf{84}:234--266.

\bibitem[Chen et~al., 2024]{chen2024inference}
Chen, Y., Wang, T., and Samworth, R.~J. (2024).
\newblock Inference in high-dimensional online changepoint detection.
\newblock {\em J. Amer. Statist. Assoc.}, \textbf{119}:1461--1472.

\bibitem[Cho and Fryzlewicz, 2015]{cho2015multiple}
Cho, H. and Fryzlewicz, P. (2015).
\newblock Multiple-change-point detection for high dimensional time series via
  sparsified binary segmentation.
\newblock {\em J. R. Stat. Soc. Ser. B. Stat. Methodol.}, \textbf{77}:475--507.

\bibitem[Cole et~al., 2010]{cole2010advances}
Cole, D.~M., Smith, S.~M., and Beckmann, C.~F. (2010).
\newblock Advances and pitfalls in the analysis and interpretation of
  resting-state fmri data.
\newblock {\em Front. Syst. Neurosci.}, \textbf{4}:1459.

\bibitem[Collier et~al., 2017]{collier2017minimax}
Collier, O., Comminges, L., and Tsybakov, A.~B. (2017).
\newblock Minimax estimation of linear and quadratic functionals on sparsity
  classes.
\newblock {\em Ann. Statist.}, \textbf{45}:923--958.

\bibitem[Donoho and Jin, 2004]{donoho2004higher}
Donoho, D. and Jin, J. (2004).
\newblock Higher criticism for detecting sparse heterogeneous mixtures.
\newblock {\em Ann. Statist.}, \textbf{32}:962--994.

\bibitem[Efron and Zhang, 2011]{efron2011false}
Efron, B. and Zhang, N.~R. (2011).
\newblock False discovery rates and copy number variation.
\newblock {\em Biometrika}, \textbf{98}:251--271.

\bibitem[Enikeeva and Harchaoui, 2019]{enikeeva2019high}
Enikeeva, F. and Harchaoui, Z. (2019).
\newblock High-dimensional change-point detection under sparse alternatives.
\newblock {\em Ann. Statist.}, \textbf{47}:2051--2079.

\bibitem[Follain et~al., 2022]{follain2022high}
Follain, B., Wang, T., and Samworth, R.~J. (2022).
\newblock High-dimensional changepoint estimation with heterogeneous
  missingness.
\newblock {\em J. R. Stat. Soc. Ser. B. Stat. Methodol.},
  \textbf{84}:1023--1055.

\bibitem[Gao et~al., 2020]{gao2020estimation}
Gao, C., Han, F., and Zhang, C.-H. (2020).
\newblock On estimation of isotonic piecewise constant signals.
\newblock {\em Ann. Statist.}, \textbf{48}:629--654.

\bibitem[Horv{\'a}th et~al., 1999]{horvath1999testing}
Horv{\'a}th, L., Kokoszka, P., and Steinebach, J. (1999).
\newblock Testing for changes in multivariate dependent observations with an
  application to temperature changes.
\newblock {\em J. Multivariate Anal.}, \textbf{68}:96--119.

\bibitem[Horv\'{a}th and Rice, 2014]{horvath2014}
Horv\'{a}th, L. and Rice, G. (2014).
\newblock Extensions of some classical methods in change point analysis.
\newblock {\em TEST}, \textbf{23}:219--255.

\bibitem[Hu et~al., 2023]{hu2023likelihood}
Hu, S., Huang, J., Chen, H., and Chan, H.~P. (2023).
\newblock Likelihood scores for sparse signal and change-point detection.
\newblock {\em IEEE Trans. Inform. Theory}, \textbf{69}:4065--4080.

\bibitem[Ingster, 1999]{ingster1999some}
Ingster, Y.~I. (1999).
\newblock Minimax detection of a signal for $l^n$-balls.
\newblock {\em Math. Methods Statist.}, \textbf{7}:401--428.

\bibitem[Jeng et~al., 2013]{jeng2013simultaneous}
Jeng, X.~J., Cai, T.~T., and Li, H. (2013).
\newblock Simultaneous discovery of rare and common segment variants.
\newblock {\em Biometrika}, \textbf{100}:157--172.

\bibitem[Killick et~al., 2012]{killick2012optimal}
Killick, R., Fearnhead, P., and Eckley, I.~A. (2012).
\newblock Optimal detection of changepoints with a linear computational cost.
\newblock {\em J. Amer. Statist. Assoc.}, \textbf{107}:1590--1598.

\bibitem[Kov{\'a}cs et~al., 2023]{kovacs2023seeded}
Kov{\'a}cs, S., B{\"u}hlmann, P., Li, H., and Munk, A. (2023).
\newblock Seeded binary segmentation: a general methodology for fast and
  optimal changepoint detection.
\newblock {\em Biometrika}, \textbf{110}:249--256.

\bibitem[Kov{\'a}cs et~al., 2020]{kovacs2020optimistic}
Kov{\'a}cs, S., Li, H., Haubner, L., Munk, A., and B{\"u}hlmann, P. (2020).
\newblock Optimistic search strategy: Change point detection for large-scale
  data via adaptive logarithmic queries.
\newblock {\em arXiv preprint arXiv:2010.10194}.

\bibitem[Li et~al., 2023]{li2023robust}
Li, M., Chen, Y., Wang, T., and Yu, Y. (2023).
\newblock Robust mean change point testing in high-dimensional data with heavy
  tails.
\newblock {\em arXiv preprint arXiv:2305.18987}.

\bibitem[Liu et~al., 2021]{liu2021minimax}
Liu, H., Gao, C., and Samworth, R.~J. (2021).
\newblock Minimax rates in sparse, high-dimensional change point detection.
\newblock {\em Ann. Statist.}, \textbf{49}:1081–1112.

\bibitem[Page, 1955]{page1955test}
Page, E. (1955).
\newblock A test for a change in a parameter occurring at an unknown point.
\newblock {\em Biometrika}, \textbf{42}:523--527.

\bibitem[Page, 1954]{page1954continuous}
Page, E.~S. (1954).
\newblock Continuous inspection schemes.
\newblock {\em Biometrika}, \textbf{41}:100--115.

\bibitem[Pilliat et~al., 2023]{pilliat2023optimal}
Pilliat, E., Carpentier, A., and Verzelen, N. (2023).
\newblock Optimal multiple change-point detection for high-dimensional data.
\newblock {\em Electron. J. Stat.}, \textbf{17}:1240--1315.

\bibitem[Shah et~al., 2007]{shah2007modeling}
Shah, S.~P., Lam, W.~L., Ng, R.~T., and Murphy, K.~P. (2007).
\newblock Modeling recurrent dna copy number alterations in array cgh data.
\newblock {\em Bioinformatics}, \textbf{23}:i450--i458.

\bibitem[Tukey, 1989]{tukey1989higher}
Tukey, J. (1989).
\newblock Higher criticism for individual significances in several tables or
  parts of tables.
\newblock {\em Princeton University, Princeton (Internal working paper)}.

\bibitem[Verzelen et~al., 2023]{verzelen2023optimal}
Verzelen, N., Fromont, M., Lerasle, M., and Reynaud-Bouret, P. (2023).
\newblock Optimal change-point detection and localization.
\newblock {\em Ann. Statist.}, \textbf{51}:1586--1610.

\bibitem[Wang and Samworth, 2018]{wang2018high}
Wang, T. and Samworth, R.~J. (2018).
\newblock High dimensional change point estimation via sparse projection.
\newblock {\em J. R. Stat. Soc. Ser. B. Stat. Methodol.}, \textbf{80}:57--83.

\bibitem[Xiang and Gao, 2025]{xiang2025supplement}
Xiang, D. and Gao, C. (2025).
\newblock Supplement to “sharp phase transitions in high-dimensional
  changepoint detection”.
\newblock {\em Bernoulli}.

\bibitem[Xie and Siegmund, 2013]{xie2013sequential}
Xie, Y. and Siegmund, D. (2013).
\newblock Sequential multi-sensor change-point detection.
\newblock {\em Ann. Statist.}, \textbf{41}:1--20.

\bibitem[Yu, 2020]{yu2020review}
Yu, Y. (2020).
\newblock A review on minimax rates in change point detection and localisation.
\newblock {\em arXiv:2011.01857}.

\end{thebibliography}

\appendix 

\section{Proofs}
\label{proofs}

\noindent \textbf{Organization of the proofs.} Recall that $\Theta^{\mathrm{1-side}}_1(p,n,\rho,s) \subseteq\Theta_1^{\mathrm{2-side}}(p,n,\rho,s)$ implies the two sided changepoint detection problem is at least as difficult as the one sided version. Note that the detection boundaries only differ when $a \leq 1-2\beta$. Thus for the lower bounds, it suffices to prove the lower bound in the one-sided problem for all cases (Appendix \ref{LBmain1}), and prove the case $a \leq 1-2\beta$ in the two-sided problem (Appendix \ref{LBmain2}). Similarly, for the upper bounds, it suffices to prove the upper bound in the two-sided problem in all cases (Appendix \ref{UBmain2}), and prove the upper bound holds in the case $a \leq 1-2\beta$ for the one-sided problem (Appendix \ref{UBmain1}). Throughout, we let $Z$ denote a standard normal variable, whose dimensions should be clear from the context.

\subsection{Lower Bound for Theorem \ref{main1}}
\label{LBmain1}
The detection boundary for the one-sided changepoint is,
\begin{align*}
\rho_{\mathrm{1-side}}^*(a,\beta)^2 &\coloneqq \begin{cases}
p^{a-(1-2\beta)} \hspace{1em} &a \leq 1-2\beta \\
(a-(1-2\beta)) \log p &1-2\beta < a \leq 1-4\beta/3 \\
2(\sqrt{1-a}-\sqrt{1-a-\beta})^2 \log p &1-4\beta/3 < a \leq 1-\beta \\
p^{a-(1-\beta)} &a > 1-\beta.
\end{cases}
\end{align*}
Put $\rho \coloneqq \rho_{\mathrm{1-side}}^*(a,\beta_1)$ for $a > 0$ and $\beta_1 \in (0,1)$. We will show that if $\beta > \beta_1$, then $\mathcal{R}_{\mathrm{1-side}}(p,\rho,a,\beta) \not\to 0$, as $p \to \infty$. Consider the testing problem,
\begin{align*}
&H_0: X_{j:} \stackrel{\iid}{\sim} N(0,I_n) \\
&H_1: k^* \sim \mathrm{unif}\{1,\dots,|\T|\}, \; X_{j:} \mid k^* \stackrel{\iid}{\sim} (1-\e)N(0,I_n)+ \e N(\rho \theta^{(k^*)}, I_n),
\end{align*}
where $\e \coloneqq p^{-\beta}$, $\T$ is the base $b \coloneqq \log n \asymp e^{p^{a(1+o(1))}}$ grid, defined,
\begin{align*}
\mathcal{T} \coloneqq \{\lfloor b^1\rfloor,\lfloor b^2 \rfloor, \dots, \lfloor b^{\log_{\log n}n}\rfloor \},
\end{align*}
and $\theta^{(k)} \in\R^n$ is the unique vector for which
\begin{align*}
\langle Z,\theta^{(k)} \rangle = \sqrt{\frac{\lfloor b^k\rfloor(n-\lfloor b^k\rfloor)}{n}} (\bar{Z}_{1:\lfloor b^k\rfloor}-\bar{Z}_{\lfloor b^k\rfloor+1:n}).
\end{align*}
More explicitly,
\begin{align*}
\theta_t^{(k)} \coloneqq \begin{cases}
\left(\frac{n-\lfloor b^k\rfloor}{n\lfloor b^k\rfloor} \right)^{1/2} \hspace{1em} &\text{if } t \leq b^k \\
-\left(\frac{\lfloor b^k\rfloor}{n(n-\lfloor b^k\rfloor)} \right)^{1/2} &\text{if } t > b^k .
\end{cases}
\end{align*}
By Part 2 of Lemma \ref{thetas}, $\langle \theta^{(k)},\rho \theta^{(k)}\rangle = \rho$. Consequently, by Part 1 of Lemma \ref{thetas} and the definition of the signal in $\Theta^{\mathrm{1-side}}_1(p,n,\rho,s)$, if the draw of $\theta$ described in $H_1$ has no more than $s\sim p^{1-\beta}$ non-null rows, then $\theta \in \Theta^{\mathrm{1-side}}_1(p,n,\rho,s)$. Note however that the number of non-null rows under $H_1$ is distributed Binomial$(p,\e)$, which implies that the resulting (random) instance of $\theta \in \R^{p \times n}$ is not necessarily an element of $\Theta_1^{\mathrm{1-side}}(p,n,\rho,s)$. To remedy this issue, note that since $\beta > \beta_1$, any fixed $\bar{\beta} \in (\beta_1,\beta)$ satisfies $\bar{\e}\coloneqq p^{-\bar{\beta}} \gg p^{-\beta}$ as $p \to \infty$. Consider the modified testing problem,
\begin{align}
\nonumber
&H_0: X_{j:} \stackrel{\iid}{\sim} N(0,I_n) \\
\label{h1bar}
&\bar{H}_1: k^* \sim \mathrm{unif}\{1,\dots,|\T|\}, X_{j:} \mid k^* \stackrel{\iid}{\sim} (1-\bar{\e})N(0,I_n)+ \bar{\e} N(\rho \theta^{(k^*)}, I_n),
\end{align}
and define the event,
\begin{align*}
G \coloneqq \left\{\sum_{j=1}^{p}\textbf{1}_{\{\text{row $j$ is non-null}\}} \geq \frac{p \bar{\e} }{\log p}\right\}. 
\end{align*}
Each row $X_{j:}$ is non-null with probability $\bar{\e}=p^{-\bar{\beta}} =\omega (p^{-\beta})$ in (\ref{h1bar}). Thus $\frac{p\bar{\e}}{\log p} = \omega(p^{1-\beta})$, and on the set $G$, the realizations of $\theta$ belong to the parameter space $\Theta_1^{\mathrm{1-side}}(p,n,\rho,s)$. It follows that the minimax testing error is lower bounded, 
\begin{align}
\nonumber
\mathcal{R}_{\mathrm{1-side}}(p,n,\rho,s) &\coloneqq \inf_\psi \left[\sup_{\theta \in \Theta_0(p,n)} \P_\theta \psi + \sup_{\theta \in \Theta_1^{\mathrm{1-side}}(p,n,\rho,s)} \P_\theta(1-\psi) \right] \\
\nonumber
&\geq \inf_\psi \left[ \P_0 \psi + \bar{\P}_1(1-\psi)\textbf{1}_G \right]  \\
\label{pgc}
&\geq \inf_\psi \left[ \P_0 \psi + \bar{\P}_1(1-\psi) \right] - \bar{\P}_1(G^c) \\
\nonumber
&= 1-\TV(\P_0,\bar{\P}_1) - \bar{\P}_1(G^c) \tag{Neyman--Pearson},
\end{align}
where $\bar{\P}_1$ denotes the data distribution under $\bar{H}_1$. By Chebyshev's inequality, $\bar{\P}_1(G^c) \to 0$, so it suffices to show that $\liminf_{p\to\infty}(1-\TV(\P_0,\bar{\P}_1)) \geq c$, for any $c \in (0,1)$. To this end, the testing problem can equivalently be written,
\begin{align}
\label{modified-testing-problem}
&H_0 : (X_{j:})_{j\in[p]} \sim \prod_{j=1}^p N(0,I_n)\\
&\bar{H}_1: (X_{j:})_{j \in [p]} \sim \frac{1}{|\T|} \sum_{k=1}^{|\T|} \prod_{j=1}^p \left[(1-\bar{\e})N(0,I_n)+\bar{\e} N(\rho\theta^{(k)},I_n) \right].
\end{align}
Let $f_0$ and $f_1$ be the densities of $X$ corresponding to $H_0$ and $\bar{H}_1$. Note that,
\begin{align*}
1-\TV(\P_0,\bar{\P}_1) = \int f_0 \wedge f_1 \geq \int_{\{f_1 \geq cf_0\}} f_0 \wedge f_1 \geq c\P_0\left(\frac{f_1}{f_0} \geq c \right). \tag{for any $c \in (0,1)$}
\end{align*}
For the right hand side to tend to $c$, it suffices to show that
\begin{align}
\label{eq:lr-tends-to-1}
\frac{f_1}{f_0}(X) \stackrel{\P_0}{\to} 1.
\end{align}
The above convergence is implied by the condition, $\limsup_{p\to\infty} \E_0(\frac{f_1}{f_0}(X))^2 \leq 1$, since by Markov's inequality,
\begin{align*}
\P_0\left(\left|\frac{f_1}{f_0}(X) - 1 \right| > \d \right) &\leq \frac{\E_0(\frac{f_1}{f_0}(X))^2 - 2\E_0(\frac{f_1}{f_0}(X))+1 }{\d^2}   = \frac{\E_0(\frac{f_1}{f_0}(X))^2 - 1}{\d^2}.
\end{align*}
The form of the likelihood ratio for the testing problem \eqref{modified-testing-problem} is 
\begin{align*}
    \frac{f_1}{f_0}(X) &= \frac{1}{|\T|} \sum_{k=1}^{|\T|} \prod_{j=1}^p \frac{(1-\bar{\e})\phi(X_{j:})+\bar{\e}\phi(X_{j:}-\rho\theta^{(k)})}{\phi(X_{j:})} \\
    &= \frac{1}{|\T|} \sum_{k=1}^{|\T|} \prod_{j=1}^p \left[1-\bar{\e}+\bar{\e}\exp\left(\rho \langle X_{j:},\theta^{(k)}\rangle-\rho^2/2 \right) \right]\tag{$\|\theta^{(k)}\|=1$}.
\end{align*}
We proceed to analyze the likelihood ratio separately in several cases.

\subsubsection*{Cases 1 and 2: $0<a \leq 1-4\beta_1/3$} 

Letting $\bar{\E}_1$ denote the expectation with respect to $\bar{H}_1$, the second moment of the likelihood ratio is
\begin{align*}
\E_0 \left(\frac{f_1}{f_0}(X) \right)^2 &= \bar{\E}_1 \frac{f_1}{f_0}(X) \\
&= \frac{1}{|\T|} \sum_{k=1}^{|\T|} \bar{\E}_1 \prod_{j=1}^p \left[1-\bar{\e}+\bar{\e}\exp\left(\rho \langle X_{j:},\theta^{(k)}\rangle-\rho^2/2 \right) \right] \\
&= \frac{1}{|\T|} \sum_{k=1}^{|\T|} \E_{l \sim \mathrm{Unif}(\{1,\dots,|\T|\})} \prod_{j=1}^p \left[1-\bar{\e}+\bar{\e} \cdot \bar{\E}_1(\exp(\rho \langle X_{j:},\theta^{(k)}\rangle-\rho^2/2 \mid k^*=l) \right] \\
&= \frac{1}{|\T|^2} \sum_{k,l \leq |\T|} \prod_{j=1}^p \left[1-\bar{\e}+\bar{\e}\cdot  \E(\exp(\rho\langle \rho\theta^{(l)}I_j+Z_{j:},\theta^{(k)}\rangle - \rho^2/2)) \right],
\end{align*}
where $I_j = 1$ indicates that row $j$ has a changepoint, and $I_j=0$ otherwise ($I_j \stackrel{\iid}{\sim} \mathrm{Bern}(\bar{\e})$), and $Z_{j:}\in \R^{n}$ denotes a standard Gaussian vector, i.e. $Z_{ji}\stackrel{\iid}{\sim} N(0,1)$ for $(j,i)\in [p] \times [n]$. Now,
\begin{align*}
\E(\exp(\rho\langle \rho \theta^{(l)}I_j + Z_{j:},\theta^{(k)}\rangle - \rho^2/2) &= (1-\bar{\e}) \E e^{\rho U-\rho^2/2} + \bar{\e}\cdot  \E e^{\rho (\rho \langle \theta^{(l)},\theta^{(k)}\rangle + U)-\rho^2/2} \\
&= 1+\bar{\e} (\exp(\rho^2\langle \theta^{(l)},\theta^{(k)}\rangle)-1) \\
&\leq 1 + \bar{\e} (\exp(\rho^2 e^{-\frac{|k-l|}{2}p^{a(1+o(1))}})-1),
\end{align*}
where the last inequality follows by part 3 of Lemma \ref{thetas}, and $U \sim N(0,1)$ denotes a standard Gaussian variable. Noting that $\frac{|k-l|}{2}=0$ when $k=l$, we apply this bound to the expression for the second moment, obtaining
\begin{align*}
\E_0\left(\frac{f_1}{f_0}(X) \right)^2 &\leq \frac{1}{|\T|^2} \sum_{k=l} \prod_{j=1}^p \left[1+\bar{\e}^2 (e^{\rho^2}-1) \right] \\
&+ \frac{1}{|\T|^2} \sum_{k\neq l} \prod_{j=1}^p \left[1+\bar{\e}^2 (\exp(\rho^2 e^{-\frac{|k-l|}{2}p^{a(1+o(1))}})-1) \right].
\end{align*}
Since $k\neq l$ and $a>0$ imply that $\rho^2 e^{-\frac{|k-l|}{2}p^{a(1+o(1))}} \leq \rho^2 e^{-\frac{1}{2}p^{a(1+o(1))}} \to 0$ as $p\to\infty$, we have that when $k\neq l$,
\begin{align*}
\prod_{j=1}^p \left[1+\bar{\e}^2 (\exp(\rho^2 e^{-\frac{|k-l|}{2}p^{a(1+o(1))}})-1) \right] &\leq \left[1+2\bar{\e}^2 \rho^2 e^{-\frac{1}{2}p^{a(1+o(1))}} \right]^p \\
&\leq \exp\left( 2p\bar{\e}^2\rho^2 e^{-\frac{1}{2}p^{a(1+o(1))}}\right),
\end{align*}
where the first inequality follows from $e^{x}-1<2x$ for $0<x<1$, and the second inequality follows from $\log(1+x)\leq x$ for $x \in \R$. Note that $ 2p\bar{\e}^2\rho^2 e^{-\frac{1}{2}p^a}\to 0$ as $p\to \infty$, so the right hand side of the above tends to 1. Plugging the above back into the bound for the second moment, we obtain
\begin{align*}
\E_0\left(\frac{f_1}{f_0}(X) \right)^2 &\leq \frac{1}{|\T|^2} \sum_{k=l} \prod_{j=1}^p \left[1+\bar{\e}^2 (e^{\rho^2}-1) \right] + \frac{1}{|\T|^2} \sum_{k\neq l}\exp\left( 2p\bar{\e}^2\rho^2 e^{-\frac{1}{2}p^{a(1+o(1))}}\right) \\
&= \frac{1}{|\T|} \exp(p \bar{\e}^2 (e^{\rho^2}-1)) + 1+o(1).
\end{align*}
Recall $|\T| \asymp e^{p^{a(1+o(1))}}$. In order for the right hand side to tend to 1, it suffices for the first term to tend to zero, i.e. $\exp(p \bar{\e}^2 (e^{\rho^2}-1)-p^{a(1+o(1))}) \to 0$. Since $\rho^2 = \rho^*_{\mathrm{1-side}}(a,\beta_1)^2 \sim \log(1+p^{a-(1-2\beta_1)})$ when $a \neq 1-2\beta_1$, we have
\begin{align*}
\exp\left(p \bar{\e}^2 (e^{\rho^2}-1)-p^{a(1+o(1))}\right) &= \exp\left(p \bar{\e}^2 p^{(a-(1-2\beta_1))(1+o(1))}-p^{a(1+o(1))}\right) \\
&= \exp\left(p^{(a+2(\beta_1-\bar{\beta}))(1+o(1))} - p^{a(1+o(1))} \right) \\
&\to 0. \tag{$\bar{\beta}>\beta_1$}
\end{align*}
When $a = 1-2\beta_1$, we have $\rho^2 = 1$, so that
\begin{align*}
\exp\left(p \bar{\e}^2 (e^{\rho^2}-1)-p^{a(1+o(1))}\right) &= \exp\left(p \bar{\e}^2 (e-1)-p^{a(1+o(1))}\right) \\
&= \exp\left(p^{(1-2\bar{\beta})(1+o(1))} - p^{(1-2\beta_1)(1+o(1))} \right) \\
&\to 0. \tag{$\bar{\beta}>\beta_1$}
\end{align*}
\subsubsection*{Case 3: $1-4\beta_1/3<a\leq 1-\beta_1$}

Put $x \coloneqq \sqrt{1-a}$ and $y\coloneqq \sqrt{1-a-\beta_1}$. Then the signal $\rho$ is of the form, $\rho = (x-y)\sqrt{2\log p}$.
The likelihood ratio test for (\ref{h1bar}) is defined by
\begin{align*}
\psi_{\LRT}(X) \coloneqq \textbf{1}\left\{\frac{f_1}{f_0}(X) > 1\right\},
\end{align*}
where, recall that $f_1$ denotes the density of $X$ under $\bar{H}_1$. By (\ref{pgc}), the minimax testing risk for the original testing problem is asymptotically lower bounded by the testing risk of $\psi_{\LRT}$,
\begin{align*}
\mathcal{R}_{\mathrm{1-side}}(p,n,\rho,s) \geq \inf_{\psi}[\P_{0} \psi + \bar{\P}_{1}(1-\psi)] - o(1) = \P_0 \psi_\LRT + \bar{\P}_1(1-\psi_{\LRT}) - o(1).
\end{align*}
The likelihood ratio is of the form,
\begin{align*}
\frac{f_1}{f_0}(X) &= \frac{1}{|\T|} \sum_{k=1}^{|\T|}L_k \\
L_k &\coloneqq \prod_{j=1}^p \big[1-\bar{\e}+\bar{\e} e^{\rho \langle X_{j:},\theta^{(k)}\rangle - \rho^2/2} \big].
\end{align*}
By Lemma \ref{lem-LRT-error-lower-bd}, the error of the likelihood ratio test is bounded from below by
\begin{align*}
\P_0(\psi_{\LRT}=1) + \bar{\P}_1(\psi_{\LRT}=0) &\geq \frac{1}{3}\bar{\P}_1(A_3),
\end{align*}
where $A_3 \coloneqq \left\{ \frac{f_1}{f_0}(X) \leq 3 \right\}$. Now note
\begin{align*}
\bar{\P}_1 \left( A_3 \right) \geq \bar{\P}_1 \left( \frac{1}{|\T|} \sum_{k \neq k^*} L_k \leq 2, L_{k^*} \leq |\T| \right),
\end{align*}
where, recall that $k^* \in \{1,\dots,|\T|\}$ is the realized changepoint location defined in \eqref{h1bar}, and the notation $k\neq k^*$ is shorthand for the set of indices $\{k\leq |\T| : k\neq k^*\}$. Applying the tower property and the conclusion of Lemma \ref{lem-Lk}, we have
\begin{align*}
\bar{\E}_1 \left( \frac{1}{|\T|} \sum_{k\neq k^*} L_k \right) &= \bar{\E}_1 \left(\frac{1}{|\T|} \sum_{k\neq k^*} \bar{\E}_1 (L_k \mid k^*) \right)\\
&= \frac{1}{|\T|} \cdot (|\T|-1)\left(1+ O(p \e^2 \rho^2e^{-\frac{1}{2}p^{a(1+o(1))}})\right) \sim 1,
\end{align*}
which is $\leq 1 + o(1)$ for $a>0$ as $p \to \infty$. It now follows from Markov's inequality that 
\begin{align*}
\P_1 \left( \frac{1}{|\T|} \sum_{k \neq k^*} L_k \leq 2 \right) = 1-\P_1 \left( \frac{1}{|\T|} \sum_{k \neq k^*} L_k > 2 \right) \geq \frac{1}{2}-o(1).
\end{align*}
Thus, to show that $\bar{\P}_1(A_3)$ is asymptotically no smaller than $1/2$, it suffices to show
\begin{align}
\label{Lk-star-prob}
\P_1(L_{k^*} > |\T|) \to 0.
\end{align}
To this end, we split the indices $j \in [p]$ into four sets as follows. Letting $Q_j = \textbf{1}_{\{\text{row $j$ is non-null}\}}$, put
\begin{align*}
\Gamma_0 &\coloneqq \left\{j : Q_j = 0\right\} \\
\Gamma_1 &\coloneqq \left\{j : Q_j = 1, \langle X_{j:},\theta^{(k^*)} \rangle \leq \sqrt{2(1-a)\log p}\right\} \\
\Gamma_2 &\coloneqq \left\{j : Q_j = 1, \sqrt{2(1-a)\log p} < \langle X_{j:},\theta^{(k^*)} \rangle \leq 2\sqrt{2\log p} \right\} \\
\Gamma_3 &\coloneqq \left\{j : Q_j = 1, \langle X_{j:},\theta^{(k^*)} \rangle > 2\sqrt{2\log p}\right\} .
\end{align*}
Then $\log L_{k^*} = \sum_{i=0}^3 R_i$, where 
\begin{align}
\label{sum-Ri}
R_i \coloneqq \sum_{j \in \Gamma_i} \log \left( 1 + \bar{\e} \left( \exp\left( \rho \langle X_{j:},\theta^{(k^*)}\rangle -\rho^2/2\right)-1\right) \right), 
\end{align}
for $i=0,1,2,3$, and the probability in condition (\ref{Lk-star-prob}) is bounded,
\begin{align*}
\bar{\P}_1(L_{k^*} > |\T|) &= \bar{\P}_1\left( \sum_{i=0}^3 R_i > \log |\T| \right) \leq \sum_{i=0}^3 \bar{\P}_1\left(R_i > \frac{1}{4}\log |\T|\right).
\end{align*}
The result now follows upon showing that
\begin{align}
\label{i012}
\bar{\P}_1\left(R_i > \frac{1}{4}\log |\T|\right) \to 0, \hspace{1em} \text{for } i=0,1,2,3. 
\end{align}

For $i=0$, note that since the rows $X_{j:}$ given $Q_j=0$ are generated conditionally independently from the $N(0,I_n)$ distribution, we have $\langle X_{j:},\theta^{(k^*)}\rangle \sim N(0,1)$ independently across $j=1,\dots,p$, so that
\begin{align*}
\bar{\E}_1 e^{R_0} &= \bar{\E}_1 (\bar{\E}_1 (e^{R_0} \mid \Gamma_0)) = \bar{\E}_1 \prod_{j \in \Gamma_0}\E (1+\bar{\e} (e^{\rho U_j-\rho^2/2}-1)) = 1,
\end{align*}
where $U_j \stackrel{\iid}{\sim} N(0,1)$ independently of $|\Gamma_0| \sim \mathrm{Binomial}(p,1-\bar{\e})$. Thus by Markov's inequality,
\begin{align}
\label{i0}
\bar{\P}_1\left(R_0 > \frac{1}{4}\log |\T|\right) = \bar{\P}_1\left(e^{R_0} > |\T|^{1/4} \right) \leq \frac{1}{|\T|^{1/4}} \to 0.
\end{align}

For $i=1$, note that each summand in (\ref{sum-Ri}) is no smaller than $\log\left( 1+ \bar{\e} \left(0-1 \right)\right)=\log(1-~\bar{\e})$. Since there are $|\Gamma_1|$ many summands in $R_1$, the difference $R_1 - |\Gamma_1|\log(1-\bar{\e}) > 0$ is positive. Thus we may apply Markov's inequality together with $\log(1-\bar{\e})<0$ to obtain
\begin{align*}
\P\left(R_1 > \frac{1}{4} \log |\T| \right) \leq \P\left(R_1- |\Gamma_1|\log(1-\bar{\e}) > \frac{1}{4} \log |\T| \right) \leq \frac{4\bar{\E}_1(R_1-|\Gamma_1|\log(1-\bar{\e}))}{\log |\T|},
\end{align*}
Next, recall that $\log(1-\bar{\e}) \asymp -\bar{\e}$ as $\bar{\e} \to 0$ and $\log|\T| \asymp p^{a(1+o(1))}$. Thus, in order for the right hand side of the above to go to zero, it suffices to show
\begin{align}
\label{r1-cond1}
p^{-a(1+o(1))} \cdot \bar{\E}_1 R_1 &\to 0 \\
\label{r1-cond2}
p^{-a(1+o(1))-\bar{\beta}} \cdot \bar{\E}_1 |\Gamma_1| &\to 0.
\end{align}
To show \eqref{r1-cond1}, note that
\begin{align*}
\bar{\E}_1 R_1 &= \bar{\E}_1 \sum_{j \in \Gamma_1} \log \left(1+\bar{\e}\left(e^{\rho \langle X_{j:},\theta^{(k^*)}\rangle-\rho^2/2}-1 \right) \right) \\
&= \sum_{j=1}^p \bar{\E}_1\left[\textbf{1}_{\{Q_j=1,\langle X_{j:},\theta^{(k^*)}\rangle \leq \sqrt{2(1-a)\log p}\}} \log \left(1+\bar{\e}\left(e^{\rho \langle X_{j:},\theta^{(k^*)}\rangle-\rho^2/2}-1 \right) \right) \right] \\
&\leq \sum_{j=1}^p \bar{\E}_1\left[\textbf{1}_{\{Q_j=1,\langle X_{j:},\theta^{(k^*)}\rangle \leq \sqrt{2(1-a)\log p}\}} \bar{\e}\cdot e^{\rho \langle X_{j:},\theta^{(k^*)}\rangle-\rho^2/2} \right] \\
&\leq \sum_{j=1}^p \bar{\e}^2 \cdot\E \left[\textbf{1}_{\{\rho+Z \leq \sqrt{2(1-a)\log p}\}} \cdot e^{\rho (\rho+Z)-\rho^2/2} \right] \tag{$Z\sim N(0,1)$},
\end{align*}
since conditional on $Q_j=1$, we have that $\langle X_{j:},\theta^{(k^*)}\rangle \stackrel{(d)}{=}\rho+Z$ where $Z\sim N(0,1)$ independently of $Q_j$. Directly integrating, the above is equal to
\begin{align}
\nonumber
&= p \bar{\e}^2 e^{\rho^2/2} \int_{-\infty}^{\sqrt{2(1-a)\log p} - \rho} e^{\rho z} \phi(z) dz \\
\nonumber
&= p \bar{\e}^2 e^{\rho^2} \int_{-\infty}^{\sqrt{2(1-a)\log p} - \rho} \phi(z-\rho) dz \\
\label{2y-x}
&= p \bar{\e}^2 e^{\rho^2} \Phi(\sqrt{2(1-a)\log p} - 2\rho) .
\end{align}
Recall that $\rho = \sqrt{2\log p}\cdot (\sqrt{1-a}-\sqrt{1-a-\beta_1}) =: \sqrt{2\log p}\cdot (x-y)$. The condition $a > 1-4\beta_1/3$ implies 
\begin{align*}
\sqrt{2(1-a)\log p} - 2\rho = \sqrt{2\log p}\cdot (2y-x)=\sqrt{2\log p} \cdot (2\sqrt{1-a-\beta_1}-\sqrt{1-a}) < 0,
\end{align*}
and so expression (\ref{2y-x}) becomes (up to log factors in $p$)
\begin{align*}
&= p \bar{\e}^2 e^{\rho^2} \Phi(\sqrt{2\log p}\cdot (x-2(x-y))) = p^{1-2\bar{\beta}+2(x-y)^2-(2y-x)^2} =p^{1-2\bar{\beta}-2y^2+x^2},
\end{align*}
from which (\ref{r1-cond1}) follows, since $\bar{\beta}>\beta_1$ implies
\begin{align*}
p^{-a(1+o(1))} \cdot \bar{\E}_1 R_1 = p^{-a(1+o(1)) +1-2\bar{\beta}-2(1-a-\beta_1)+1-a} = p^{-2(\bar{\beta}-\beta_1)} \to 0.
\end{align*}
For \eqref{r1-cond2}, note that
\begin{align*}
\bar{\E}_1 |\Gamma_1| &= \sum_{j=1}^p \bar{\P}_1(Q_j=1,\langle X_{j:},\theta^{(k^*)}\rangle \leq \sqrt{2(1-a)\log p}) \\
&= p^{1-\bar{\beta}} \P(\rho+Z\leq \sqrt{2(1-a)\log p}) \tag{$Z\sim N(0,1)$}  \\
&\leq p^{1-\bar{\beta}}.
\end{align*}
(\ref{r1-cond2}) now follows since $\bar{\beta} > \beta_1$ implies
\begin{align*}
p^{-a(1+o(1))-\bar{\beta}} \cdot \bar{\E}_1 |\Gamma_1| \leq p^{-a(1+o(1))+1-2\bar{\beta}} \leq p^{-a(1+o(1))+1-2\beta_1} \to 0,
\end{align*}
because $a>1-4\beta_1/3 > 1-2\beta_1$. We have now shown (\ref{r1-cond1}) and (\ref{r1-cond2}), which imply that condition (\ref{i012}) holds with $i=1$. 

Next we check condition (\ref{i012}) for $i=2$. To this end, first note that each summand in $R_2$ is positive, because $\langle X_{j:},\theta^{(k^*)} \rangle > \sqrt{2(1-a) \log p}$ implies
\begin{align*}
\rho \langle X_{j:},\theta^{(k^*)} \rangle - \rho^2/2 &> \rho \sqrt{2(1-a)\log p} - \rho^2/2 \\
&= \rho \sqrt{2\log p} \cdot \left(\sqrt{1-a}-\frac{1}{2}(\sqrt{1-a}-\sqrt{1-a-\beta_1})\right) > 0,
\end{align*}
which implies $\log \left( 1 + \bar{\e} \left( \exp\left( \rho \langle X_{j:},\theta^{(k^*)}\rangle -\rho^2/2\right)-1\right) \right) > 0$. Thus Markov's inequality can be applied to obtain
\begin{align}
\label{markov-r2}
\P\left(R_2 > \frac{1}{4}\log |\T| \right) \leq \frac{4\bar{\E}_1 R_2}{\log |\T|} 
\end{align}
By construction, if $j \in \Gamma_2$, then $\langle X_{j:},\theta^{(k^*)}\rangle \leq 2\sqrt{2\log p}$, which implies
\begin{align*}
\bar{\E}_1 R_2 &= \sum_{j=1}^p \bar{\E}_1 \left[ \textbf{1}_{\{j \in \Gamma_2\} } \log \left(1+\bar{\e}\left(e^{\rho \langle X_{j:},\theta^{(k^*)}\rangle-\rho^2/2}-1 \right) \right) \right] \\
&\leq \sum_{j=1}^p \bar{\E}_1 \left[ \textbf{1}_{\{j \in \Gamma_2\} } \log \left(1+\bar{\e}\left(e^{\rho \cdot 2\sqrt{2\log p}-\rho^2/2}-1 \right) \right) \right] \\
&\leq \bar{\E}_1 |\Gamma_2| \cdot \log\left(1+p^{C} \right)
\end{align*}
for some fixed constant $C>0$ as $p\to \infty$. 
To bound $\bar{\E}_1 |\Gamma_2|$, note that
\begin{align*}
\bar{\E}_1  |\Gamma_2| &= \bar{\E}_1 \sum_{j=1}^p \textbf{1}_{\left\{Q_j=1,\sqrt{2(1-a)\log p} < \langle X_{j:},\theta^{(k^*)} \rangle \leq 2\sqrt{2\log p} \right\}} \\
&= \sum_{j=1}^p p^{-\bar{\beta}} \P\left(\sqrt{2(1-a)\log p} < \rho + Z \leq 2\sqrt{2\log p} \right) \tag{$Z\sim N(0,1)$}\\ 
&\leq p^{1-\bar{\beta}} \bar{\Phi}\left(\sqrt{2(1-a)\log p}-\rho\right) \\
&\asymp p^{1-\bar{\beta} - (\sqrt{1-a}-(x-y))^2} \tag{up to log factors in $p$}.
\end{align*}
Plugging this into (\ref{markov-r2}), we have (up to log factors in $p$) that
\begin{align*}
\bar{\P}_1 \left(R_2 > \frac{1}{4}\log |\T| \right) \lesssim \frac{p^{1-\bar{\beta}-(1-a-\beta_1)}}{p^{a(1+o(1))}} \to 0
\end{align*}
since $\beta_1 < \bar{\beta}$.

Finally, we check condition (\ref{i012}) for $i=3$. We have
\begin{align*}
\bar{\P}_1 \left(R_3 > \frac{1}{4}\log |\T| \right) &\leq \bar{\P}_1 \left(R_3 > 0 \right) \\
&= \bar{\P}_1(\Gamma_3\neq \varnothing) \\
&= \bar{\P}_1\left( \bigcup_{j=1}^p \left\{Q_j=1, \langle X_{j:},\theta^{(k^*)}\rangle > 2\sqrt{2\log p} \right\} \right) \\
&\leq \sum_{j=1}^p \bar{\P}_1\left(Q_j=1, \langle X_{j:},\theta^{(k^*)}\rangle > 2\sqrt{2\log p}  \right) \\
&= p\bar{\e}\cdot  \P(\rho+Z>2\sqrt{2\log p}) \\
&= p^{1-\bar{\beta}} \bar{\Phi}(\sqrt{2\log p}\cdot (2-(x-y))) \\
&= p^{1-\bar{\beta}-(2-(x-y))^2}, \tag{up to log factors in $p$}
\end{align*}
which tends to zero polynomially in $p$ if the exponent $1-\bar{\beta}-(2-(x-y))^2 < 0$. This condition is equivalent to
\begin{align*}
\sqrt{1-\bar{\beta}} + \sqrt{1-a}-\sqrt{1-a-\beta_1}< 2,
\end{align*}
since $x=\sqrt{1-a}$ and $y=\sqrt{1-a-\beta_1}$. This condition is satisfied because the left hand side is
\begin{align*}
\sqrt{1-\bar{\beta}} + \sqrt{1-a-\beta_1+\beta_1}-\sqrt{1-a-\beta_1}<\sqrt{1-\bar{\beta}} +\sqrt{\beta_1} < 2,
\end{align*}
because $\bar{\beta},\beta_1 \in (0,1)$. 

We have now shown (\ref{i012}) for each of $i=0,1,2,3$, completing the proof of this case.

\subsubsection*{Case 4: $a > 1-\beta_1$}

For this case, a slightly different construction is used, in which the 
signal is distributed evenly across the first $s\sim p^{1-\beta}$ rows (recall that $\rho^2 = p^{a-(1-\beta_1)}$). The hypothesis testing problem is
\begin{align*}
&H_0: (X_{j:})_{j\leq p} \sim \prod_{j=1}^p N(0,I_n) \\
&H_1: k^* \sim \mathrm{Unif}\{1,\dots,|\T|\}, (X_{j:})_{j\leq s} \mid k^* \sim \prod_{j=1}^s N(\rho \theta^{(k^*)},I_n),\hspace{1em} (X_{j:})_{j > s} \sim \prod_{j=s+1}^p N(0,I_n).
\end{align*}
Letting $\E_0$ and $\E_1$ (resp. $f_0$ and $f_1$) denote expectation (resp. density) with respect to the above two data generating mechanisms, the second moment of the likelihood ratio is
\begin{align*}
\E_0 \left(\frac{f_1}{f_0}(X)\right)^2 &= \E_1 \frac{f_1}{f_0}(X) \\
&= \frac{1}{|\T|} \sum_{k=1}^{|\T|}\E_1 \prod_{j=1}^s \frac{\phi(X_{j:}-\rho\theta^{(k)})}{\phi(X_{j:})} \\
&= \frac{1}{|\T|^2} \sum_{k,l} \prod_{j=1}^s  \E_1(\exp(\rho \langle X_{j:},\theta^{(k)}\rangle - \rho^2/2) \mid k^*=l) \\
&= \frac{1}{|\T|^2} \sum_{k,l} \prod_{j=1}^s  \E(\exp(\rho \langle \rho \theta^{(l)}+Z_{j:},\theta^{(k)}\rangle - \rho^2/2)),
\end{align*}
where $Z_{j:}\stackrel{\iid}{\sim}N(0,I_n)$, and $\E$ denotes expectation taken with respect to these standard Gaussian vectors. Now since $\langle Z_{j:},\theta^{(k)}\rangle \sim N(0,1)$ for any $j,k$, we have the above is equal to
\begin{align*}
&= \frac{1}{|\T|^2} \sum_{k,l} e^{s\rho^2 \langle \theta^{(k)},\theta^{(l)}\rangle} \\
&\leq \frac{1}{|\T|^2}\sum_{k=l} e^{p^{1-\beta} \rho^2} + \frac{1}{|\T|^2}\sum_{k\neq l} \exp\left(p^{1-\beta} e^{-\frac{|k-l|}{2}p^a}\right) \tag{Lemma \ref{thetas}}\\
&\leq \frac{1}{|\T|} e^{p^{1-\beta+a-(1-\beta_1)}} + 1+o(1),
\end{align*}
since $\langle \theta^{(k)},\theta^{(l)}\rangle \leq e^{-\frac{|k-l|}{2}p^a}$ according to Lemma \ref{thetas} with $b \sim \log n$ (recall the asymptotic relationship \eqref{3log}). Now since $|\T| = e^{p^{a(1+o(1))}}$, the first term is
\begin{align*}
\frac{1}{|\T|} e^{p^{1-\beta+a-(1-\beta_1)}} =\exp\left(p^{a+(\beta_1-\beta)} - p^{a(1+o(1))} \right) \to 0, \tag{$\beta_1 < \beta$}
\end{align*}
which gives $\limsup_{p\to\infty}\E_0 \left(\frac{f_1}{f_0}(X)\right)^2 \leq 1$, as desired.

\subsection{Lower Bound for Theorem \ref{main2}}
\label{LBmain2}
As discussed in the beginning of Appendix \ref{proofs}, since the detection boundaries for the one-sided and two-sided changepoint problems are the same except for case $a \leq 1-2\beta_1$, it suffices to show \eqref{eq:lr-tends-to-1} holds for the two-sided version of the testing problem problem. To this end, consider
\begin{align*}
&H_0 : (X_{j:})_{j\in[p]} \sim \prod_{j=1}^p N(0,I_n)\\
&\bar{H}_1: (X_{j:})_{j \in [p]} \sim \frac{1}{|\T|} \sum_{k=1}^{|\T|} \prod_{j=1}^p \left[(1-\bar{\e})N(0,I_n)+\bar{\e}\cdot \frac{1}{2} \left(N(\rho\theta^{(k)},I_n)+N(-\rho\theta^{(k)},I_n)\right) \right],
\end{align*}
where $\rho \coloneqq \rho^*_{\mathrm{2-side}}(a,\beta_1)$ and $a \geq 0$ and $\beta_1 < \beta$ and $\bar{\e} = p^{-\bar{\beta}}$, where $\bar{\beta} \in (\beta_1,\beta)$, and $\theta^{(k)}$ is defined as in the proof for Theorem \ref{main1}. In words, under the alternative hypothesis $\bar{H}_1$, the mean vector for a non-null row is either $\rho \theta^{(k)}$ or $-\rho \theta^{(k)}$ for some $k \in \{1,\dots, |\T|\}$. 

By construction of $\theta^{(k)}$ (Part 1 of Lemma \ref{thetas}), for any vector $v_\mu \in \R^n$ whose first $\lfloor b^k\rfloor$ components are equal to $\mu_1$, and remaining $n-\lfloor b^k\rfloor$ components are equal to $\mu_2$, we have
\begin{align*}
|\langle \theta^{(k)},v_\mu \rangle| = \sqrt{\frac{\lfloor b^k\rfloor(n-\lfloor b^k\rfloor)}{n}} |\mu_1-\mu_2|.
\end{align*}
Taking $v_\mu \in \{\rho \theta^{(k)},-\rho \theta^{(k)}\}$, this implies $|\langle \theta^{(k)},v_\mu \rangle| = \rho$. Thus, each non-null mean vector $\theta_{j:}$ generated in $\bar{H}_1$ satisfies the two-sided signal requirement of $\Theta_1^{\mathrm{2-side}}(p,n,\rho,s)$. Together with the discussion on the choice of $\bar{\beta}$ in Appendix \ref{LBmain1}, this shows that the random instance of $\theta$ generated under $\bar{H}_1$ is an element of $\Theta^{\mathrm{2-side}}_1(p,n,\rho,s)$ with high probability.

Since $\bar{\beta} < \beta_1$, the calculation (\ref{pgc}) discussed in Appendix \ref{LBmain1} implies that it is enough to show 
\begin{align}
\label{eq:sec-mom-condition}
\limsup_{p\to\infty} \E_0 \left(\frac{f_1}{f_0}(X)\right)^2 \leq 1,
\end{align}
where $f_0$ (resp. $f_1$) is the density of the data generated by $H_0$ (resp. $\bar{H}_1$). The likelihood ratio can be expressed
\begin{align*}
    \frac{f_1}{f_0}(X) &= \frac{1}{|\T|} \sum_{k=1}^{|\T|} \prod_{j=1}^p \frac{(1-\bar{\e})\phi(X_{j:}) + \bar{\e} \left(\frac{1}{2} \phi(X_{j:}-\rho \theta^{(k)}) + \frac{1}{2} \phi(X_{j:}+\rho \theta^{(k)}) \right)}{\phi(X_{j:})} \\
    &= \frac{1}{|\T|} \sum_{k=1}^{|\T|} \prod_{j=1}^p \left[ 1-\bar{\e} + \bar{\e} e^{-\rho^2/2} \cdot \frac{e^{ \rho \langle X_{j:},\theta^{(k)}\rangle}+e^{-\rho\langle X_{j:},\theta^{(k)}\rangle}}{2}  \right] \\
\end{align*}
By linearity of expectation,
\begin{align*}
\nonumber
\E_0 \left(\frac{f_1}{f_0}(X)\right)^2 &= \bar{\E}_1 \frac{f_1}{f_0}(X) \\
\nonumber
&= \frac{1}{|\T|} \sum_{k=1}^{|\T|} \bar{\E}_1 \prod_{j=1}^p \left[1-\bar{\e}+\bar{\e} \frac{e^{-\rho^2/2}}{2}\left( e^{\rho \langle X_{j:},\theta^{(k)}\rangle} + e^{-\rho \langle X_{j:},\theta^{(k)}\rangle} \right) \right].
\end{align*}
By definition of $\bar{H}_1$, the rows of $X$ are conditionally independent given the changepoint location, so the above is equal to
\begin{align}
\label{2side2m}
&= \frac{1}{|\T|^2} \sum_{k,l \leq |\T|} \prod_{j=1}^p \left[1-\bar{\e}+\bar{\e} \frac{e^{-\rho^2/2}}{2}\bar{\E}_1 \left( e^{\rho \langle X_{j:},\theta^{(k)}\rangle} + e^{-\rho \langle X_{j:},\theta^{(k)}\rangle} \mid k^* = l\right) \right].
\end{align}
Letting $Z_{j:}\stackrel{\iid}{\sim}N(0,I_n)$ for $j=1,\dots,p$, the conditional expectation can be bounded as follows,
\begin{align*}
\bar{\E}_1 (e^{\rho \langle X_{j:},\theta^{(k)}\rangle} \mid k^*=l) &= (1-\bar{\e}) e^{\rho^2/2} + \bar{\e} \cdot \frac{1}{2}\left(\E e^{\rho \langle \rho \theta^{(l)} + Z_{j:},\theta^{(k)}\rangle} + \E e^{\rho \langle -\rho \theta^{(l)}+Z_{j:},\theta^{(k)}\rangle} \right) \\
&= (1-\bar{\e}) e^{\rho^2/2} + \bar{\e} \cdot \frac{e^{\rho^2/2}}{2}\left(e^{\rho^2 \langle \theta^{(l)},\theta^{(k)}\rangle} + e^{-\rho^2 \langle \theta^{(l)},\theta^{(k)}\rangle} \right) \\
&\leq (1-\bar{\e}) e^{\rho^2/2} + \bar{\e} \cdot e^{\rho^2/2}e^{\rho^{4}\langle \theta^{(l)},\theta^{(k)}\rangle^2/2},
\end{align*}
where we have used the inequality $\frac{1}{2}(e^{x}+e^{-x}) \leq e^{x^2/2}$ in the last line. By symmetry we also have
\begin{align*}
\bar{\E}_1 (e^{-\rho \langle X_{j:},\theta^{(k)}\rangle} \mid k^*=l) \leq (1-\bar{\e}) e^{\rho^2/2} + \bar{\e} \cdot e^{\rho^2/2}e^{\rho^{4}\langle \theta^{(l)},\theta^{(k)}\rangle^2/2}.
\end{align*}
Plugging these two estimates back into (\ref{2side2m}), we obtain
\begin{align*}
\E_0 \left( \frac{f_1}{f_0}(X) \right)^2 &\leq \frac{1}{|\T|^2} \sum_{k,l\leq |\T|} \prod_{j=1}^p \left[1-\bar{\e}+\bar{\e} \cdot e^{-\rho^2/2} \left((1-\bar{\e})e^{\rho^2/2}+\bar{\e}\cdot e^{\rho^2/2}e^{\rho^4\langle \theta^{(l)},\theta^{(k)}\rangle^2/2} \right) \right] \\
&\leq \frac{1}{|\T|^2} \sum_{k,l\leq |\T|} \prod_{j=1}^p \left[1-\bar{\e}+\bar{\e} \left(1-\bar{\e}+\bar{\e} e^{\rho^4\langle \theta^{(l)},\theta^{(k)}\rangle^2/2} \right) \right] \\
&= \frac{1}{|\T|^2} \sum_{k,l\leq |\T|} \prod_{j=1}^p \left[1+\bar{\e}^2 \left(e^{\rho^4\langle \theta^{(l)},\theta^{(k)}\rangle^2/2}-1 \right) \right] \\
&\leq \frac{1}{|\T|^2} \sum_{k,l\leq |\T|} \exp\left(p^{1-2\bar{\beta}}(e^{\rho^4\langle \theta^{(l)},\theta^{(k)}\rangle^2/2}-1) \right)\tag{$\log(1+x)\leq x$}\\
&\leq \frac{1}{|\T|^2} \sum_{k=l} \exp\left(p^{1-2\bar{\beta}}(e^{\rho^4/2}-1) \right) + \frac{1}{|\T|^2} \sum_{k\neq l} \exp\left(p\bar{\e}^2(e^{\rho^4 e^{-\frac{1}{2}p^{a(1+o(1))}}/2}-1) \right) ,
\end{align*}
where in the last inequality, we have used $|\langle \theta^{(k)},\theta^{(l)}\rangle | \leq e^{-\frac{1}{2}p^{a(1+o(1))}}$, which follows from Part 3 of Lemma \ref{thetas} with $b=\log n \sim e^{p^a}$. Now since $\rho^4 e^{-\frac{1}{2}p^{a(1+o(1))}} \to 0$ as $p\to \infty$, the inequality $e^{x}-1 \leq 2x$ for $0<x<1$ implies that the above is bounded by
\begin{align*}
&\leq \frac{1}{|\T|} \exp\left(p^{1-2\bar{\beta}}(e^{\rho^4/2}-1) \right) + \frac{1}{|\T|^2} \sum_{k\neq l} \underbrace{\exp\left(p\bar{\e}^2\rho^4 e^{-\frac{1}{2}p^{a(1+o(1))}} \right)}_{1+o(1)}  \tag{$a>0$}\\
&= \exp\left(p^{1-2\bar{\beta}}(e^{\rho^4/2}-1)-p^{a(1+o(1))} \right) + 1+o(1).
\end{align*}
In order for the right hand side to tend to 1, it suffices for the first term to tend to zero, i.e. $\exp(p^{1-2\bar{\beta}}(e^{\rho^4/2}-1)-p^{a(1+o(1))}) \to 0$. Under  $a \leq 1-2\beta_1$, there are two cases; first suppose $a < 1-2\beta_1$. Then
\begin{align*}
\rho^2 =p^{\frac{a-(1-2\beta_1)}{2}}\sim \sqrt{\log(1+p^{a-(1-2\beta_1)})},    
\end{align*}
and the first term becomes
\begin{align*}
\exp(p^{1-2\bar{\beta}}(e^{\rho^4/2}-1)-p^{a(1+o(1))}) &\leq \exp(p^{1-2\bar{\beta}+a-(1-2\beta_1)}-p^{a(1+o(1))}) \to 0,
\end{align*}
since $\beta_1 < \bar{\beta}$. If $a = 1-2\beta_1$, then $\rho^2 = 1$, so that
\begin{align*}
\exp(p^{1-2\bar{\beta}}(e^{\rho^4/2}-1)-p^{a(1+o(1))}) &= \exp(p^{(1-2\bar{\beta})(1+o(1))}-p^{(1-2\beta_1)(1+o(1))}) \to 0,
\end{align*}
since $0<\beta_1<\bar{\beta}$.

\subsection{Upper Bound for Theorem \ref{main2}}
\label{UBmain2}

Put $\beta < \beta_1$ and $\rho \coloneqq \rho^*_{\mathrm{2-side}}(a,\beta_1)$ as in the statement of Theorem \ref{main2}. To show the second part of Theorem \ref{main2}, it suffices to show that for any $\theta \in \Theta_1^{\mathrm{2-side}}(p,n,\rho,s)$, the Type I and II errors of the penalized Berk--Jones statistic tend to zero as $p\to \infty$. The test statistic in this setting is defined in terms of the KL divergence function \eqref{KL-function} for two Bernoulli distributions,
\begin{align}
\label{pbj-stat}
\PBJ_p \coloneqq \left[\max_{k\leq | \T|} \sup_{q \in (0,1)} p K(\bar{S}_{p,k}(q),q)\right] - 2\log |\T|,
\end{align}
where $\mathcal{T}$ is the following grid,
\begin{align*}
\T &\coloneqq \left\{\lfloor (1+\d)^0\rfloor,\lfloor (1+\d)^1\rfloor,\dots,\lfloor (1+\d)^{\log_{1+\d}\frac{n}{2}}\rfloor,\lfloor n-(1+\d)^{\log_{1+\d}\frac{n}{2}}\rfloor,\dots,\lfloor n-(1+\d)^0\rfloor \right\},
\end{align*}
where $\d = \frac{1}{\log\log n}\to 0$, and $\bar{S}_{p,k}(q)$ is defined
\begin{align}
\label{sydef}
\bar{S}_{p,k} &\coloneqq \frac{1}{p} \sum_{j=1}^p \textbf{1}_{\{|Y_{jk}| > \bar{\Phi}^{-1}(q/2)\}}, \hspace{1em} k=1,\dots,|\T|
\end{align}
and $Y_{jk}$ is the contrast corresponding to the $k^{th}$ element in the grid $t_k \in \T$,
\begin{align*}
Y_{jk} \coloneqq \sqrt{\frac{t_k (n-t_k)}{n}}(\bar{X}_{j,1:t_k} - \bar{X}_{j,t_k+1:n}), \hspace{2em} 1\leq j \leq p, \hspace{1em} 1\leq k \leq |\T| .
\end{align*}
Note that in the current asymptotic setting (\ref{3log}), the cardinality of the grid is $|\T| = e^{p^{a(1+o(1))}}$. The test is performed by checking if the penalized Berk--Jones statistic \eqref{pbj-stat} exceeds the level $2(2+\gamma)\log p$, where $\gamma > 0$ is an arbitrary positive constant,
\begin{align}
\label{eq:pbj-test}
\psi_{\PBJ}(X) \coloneqq \textbf{1}_{\{\PBJ_p > 2(2+\gamma)\log p\}}.
\end{align}

\subsubsection*{Type I error}

For any $\theta \in \Theta_0(p,n)$, we have $Y_{jk} \sim N(0,1)$. The maximum over $q > 0$ is equivalent to taking a maximum over a finite set \citep{berk1979goodness},
\begin{align*}
\max_{q \in (0,1)} K(\bar{S}_{p,k}(q),q) = \max_{j\leq p} K(j/p,p_{k,(j)}),
\end{align*}
where $p_{k,j} \coloneqq 2\bar{\Phi}(|Y_{jk}|)$ is the two sided $p$-value corresponding to the $k^{th}$ element in $\T$, and $p_{k,(1)}<\dots <p_{k,(p)}$ are the ordered $p$-values. Under the null, they have the same distribution as the order statistics of $p$ iid Uniform$(0,1)$ variables, since for every $k \leq |\T|$ the $Y_{jk}$ are independent across $j=1,\dots,p$ and standard normal under the null. It now follows by the union bound that
\begin{align*}
\P_\theta(\PBJ_p > 2(2+\gamma) \log p) &= \P_\theta \left(\max_{k\leq |\T|,j\leq p} pK(j/p,p_{k,(j)}) > 2(2+\gamma)\log p + 2\log |\T| \right) \\
&\leq |\T| \sum_{j=1}^p \P_\theta\left(pK(j/p,p_{1,(j)}) > 2(2+\gamma)\log p + 2\log |\T| \right) ,
\end{align*}
since for every $j$, the quantities $K(j/p,p_{k,(j)})$ are identically distributed across $k=1,\dots,|\T|$ when the null is true. By Lemma \ref{chernoff}, the above is bounded by
\begin{align*}
&\leq |\T| \left[(1+9/e) e^{-2(2+\gamma)\log p - 2\log |\T|} +\sum_{j=2}^p e\sqrt{2} j e^{-(1-1/j)(2(2+\gamma)\log p + 2\log |\T|)}\right] \\
&\leq |\T| \left[(1+9/e) e^{-\gamma\log p - \log |\T|} +e\sqrt{2} p\sum_{j=2}^p  e^{-\frac{1}{2}(2(2+\gamma)\log p + 2\log |\T|)}\right] \\
&\leq |\T| \left[(1+9/e) e^{-\gamma\log p - \log |\T|} +e\sqrt{2} p^2 e^{-(2+\gamma)\log p -\log |\T|}\right] \to 0.
\end{align*}

\subsubsection*{Type II error}
For any $\theta \in \Theta_1^{\mathrm{2-side}}(p,n,\rho,s)$, the Type II error is,
\begin{align*}
\P_\theta(\PBJ_p \leq 2(2+\gamma)\log p) &= \P_\theta \left(\max_{k\leq |\T|,q>0} K(\bar{S}_{p,k}(q),q) \leq \frac{2(2+\gamma)\log p + 2p^{a(1+o(1))}}{p} \right),
\end{align*}
since $|\T| = e^{p^{a(1+o(1))}}$. To show that the right hand side tends to zero as $p\to \infty$, it suffices to pick $k \leq |\T|$ and $q > 0$ such that $K(\bar{S}_{p,k}(q),q) = \omega(p^{a-1+\zeta})$ in probability for some constant $\zeta > 0$, i.e.
\begin{align}
\label{type2crit}
\frac{K(\bar{S}_{p,k}(q),q)}{p^{a-1}} \stackrel{\P_\theta}{\longrightarrow} \infty \hspace{1em}\text{polynomially fast in $p$.}
\end{align}
Denote by $t^*$ the true changepoint location in $\theta \in \Theta_1^{\mathrm{2-side}}(p,n,\rho,s)$. The Berk--Jones test is equivalent to a threshold test (see Section 1.1 of \cite{arias2019detection}) whose rejection region is of the form,
\begin{align}
\label{threshtest}
\bigcup_{k \leq \T, t \in \mathcal{S}} \{\bar{S}_{p,k}(t) \geq c_t\},
\end{align}
for some subset $\mathcal{S} \subseteq\R$ and $(c_t)$ a set of critical values.
It then follows from the definition (\ref{sydef}) that if the means of the Gaussian statistics $Y_{jk}$ are increased, the power of the threshold test increases, i.e. the Type II error decreases. Thus it is sufficient to show the Type II error tends to zero when the lower bounds on $s$ and $\rho$ in the definition (\ref{2sidecp}) of the parameter space $\Theta_1^{2-\mathrm{side}}(p,n,\rho,s)$ are achieved, that is,
\begin{align*}
\sqrt{\frac{t^*(n-t^*)}{n}}|\mu_{j1}-\mu_{j2}| &= \rho \\
\sum_{j=1}^p \textbf{1}_{\{\text{row $j$ has a changepoint}\}} &= s.
\end{align*}
Without loss of generality, suppose that $t^* \leq n/2$; by symmetry of $\T$, an analogous argument can be made for $t^* > n/2$. Let $\rho \coloneqq \rho^*_{\mathrm{2-side}}(a,\beta_1)$. By Lemma \ref{closegrid}, there exists some $\tilde{t} \coloneqq \lfloor (1+\d)^{\tilde{k}} \rfloor \in \T$ for which
\begin{align}
\label{murho}
|Y_{j\tilde{k}}| = |\mu + Z| \text{ where } \mu = (1+o(1)) \rho,
\end{align}
when $j$ corresponds to a non-null row in $\theta$. When $j$ corresponds to a null row in $\theta$, 
\begin{align*}
Y_{jk} \sim N(0,1) \hspace{2em} \text{for all }k =1,\dots,|\T|.
\end{align*}
Throughout the following calculations, $\mu$ refers to the mean satisfying (\ref{murho}).

\subsubsection*{Case 1: $a \leq 1-2\beta_1$}

In the case $a < 1-2\beta_1$, we have $\rho = p^{\frac{a-(1-2\beta_1)}{4}} \to 0$. Put $q \coloneqq 2\bar{\Phi}(\mu) \to 1$, where $\mu = (1+o(1))\rho =(1+o(1)) p^{\frac{a-(1-2\beta_1)}{4}} \to 0$. Then by (\ref{murho}), we have
\begin{align*}
\E_\theta \bar{S}_{p,\tilde{k}}(q) &= \frac{1}{p} \sum_{j=1}^p \P_\theta \left(|Y_{j\tilde{k}} | > \bar{\Phi}^{-1}(q/2)\right) \\
&= (1-\e) \P(|Z| > \mu) + \e \P(|\mu+Z|>\mu) \\
&= (1-\e)2\bar{\Phi}(\mu) + \e (\bar{\Phi}(0) + \bar{\Phi}(2\mu)) \to 1, \\
\var_\theta(\bar{S}_{p,\tilde{k}}(q)) &\leq \frac{1}{p^2}\sum_{j=1}^p \P_\theta\left(|Y_{j\tilde{k}}|>\bar{\Phi}^{-1}(q/2) \right) \leq p^{-1},
\end{align*}
where $\e \coloneqq p^{-\beta}$ is the fraction of non-null rows in $\theta$. 
It now follows from Chebyshev's inequality that
\begin{align}
\label{chebyshev-k-tilde}
\P_\theta\left(|\bar{S}_{p,\tilde{k}}(q) -\E_\theta \bar{S}_{p,\tilde{k}}(q)|\leq p^{-1/2}\log p\right) \to 1,
\end{align}
as $p\to\infty$. The above calculation for $\E_\theta\bar{S}_{p,\tilde{k}}(q)$ and the Mean Value Theorem imply that
\begin{align}
\nonumber
\E_\theta\bar{S}_{p,\tilde{k}}(q)-q &= \e (\bar{\Phi}(0) + \bar{\Phi}(2\mu) - 2\bar{\Phi}(\mu)) \tag{$q=2\bar{\Phi}(\mu)$} \\
\nonumber
&= \e (\phi'(0) \mu^2 + o(\mu^2)) \tag{$\mu \to 0$}\\
\nonumber
&= \phi'(0) p^{-\beta+\frac{a-(1-2\beta_1)}{2}}(1+o(1)) \\
\nonumber
&= \phi'(0) p^{(\beta_1-\beta) + \frac{a}{2}-\frac{1}{2}} (1+o(1)) \\
\label{omega-variance}
&= \omega(p^{-1/2}\log p),
\end{align}
since $\beta_1 > \beta$. Hence, by Part 1 of Lemma \ref{kbehave}, we have 
\begin{align*}
K(\bar{S}_{p,\tilde{k}}(q),q) &\geq 2(\bar{S}_{p,\tilde{k}}(q)-q)^2 \\
&= 2(\bar{S}_{p,\tilde{k}}(q)-\E_\theta\bar{S}_{p,\tilde{k}}(q) + \E_\theta\bar{S}_{p,\tilde{k}}(q)-q)^2 \\
&= 2(\E_\theta\bar{S}_{p,\tilde{k}}(q)-q)^2(1+o_{\P_\theta}(1)) \tag{\eqref{chebyshev-k-tilde} and \eqref{omega-variance}}\\
&= 2\phi'(0)^2 p^{2(\beta_1-\beta) + a-1}(1+o_{\P_\theta}(1)) .
\end{align*}
Then (\ref{type2crit}) follows since $\beta_1 > \beta$.

In the case $a = 1-2\beta_1$, we have $\rho^2 = 1$. Put $q \coloneqq 2\bar{\Phi}(\mu)$. Then by the same calculation in the previous case,
\begin{align*}
\E_\theta \bar{S}_{p,\tilde{k}}(q) &= (1-\e) 2 \bar{\Phi}(\mu) + \e (\bar{\Phi}(0) + \bar{\Phi}(2\mu)) \\
\var_\theta(\bar{S}_{p,\tilde{k}}(q)) &\leq p^{-1}.
\end{align*}
We then have
\begin{align*}
\E_\theta \bar{S}_{p,\tilde{k}}(q) - q &= \e(\bar{\Phi}(0) + \bar{\Phi}(2\mu) - 2\bar{\Phi}(\mu)) \\
&\sim p^{-\beta} (\bar{\Phi}(0) + \bar{\Phi}(2) - 2\bar{\Phi}(1))  \\
&= \omega(p^{-1/2}\log p),
\end{align*}
since $\beta < \beta_1 = \frac{1-a}{2} < \frac{1}{2}$ and $\bar{\Phi}(0) + \bar{\Phi}(2) - 2\bar{\Phi}(1)>0$. By Chebyshev's inequality,
\begin{align*}
\P_\theta \left( |\bar{S}_{p,\tilde{k}}(q) -\E_\theta \bar{S}_{p,\tilde{k}}(q)|\leq p^{-1/2}\log p\right) \to 1,
\end{align*}
as $p \to \infty$. By Part 1 of Lemma \ref{kbehave}, and the reasoning from the previous paragraph, we have
\begin{align*}
K(\bar{S}_{p,\tilde{k}}(q),q) &\geq 2(\E_\theta \bar{S}_{p,\tilde{k}}(q) - q)^2 (1+o_{\P_\theta}(1)) \\
&\gtrsim p^{-2\beta}(1+o_{\P_\theta}(1)).
\end{align*}
Now since $-2\beta-(a-1) > 1-2\beta_1 - a = 0$, the condition (\ref{type2crit}) is satisfied.

\subsubsection*{Case 2: $1-2\beta_1 < a \leq 1-4\beta_1/3$}

In this case, $\rho = \sqrt{(a-(1-2\beta_1))\log p} \to \infty$. Put $q \coloneqq 2\bar{\Phi}(2\mu) \to 0$. We have
\begin{align}
\nonumber
\E_\theta \bar{S}_{p,\tilde{k}}(q) &= \frac{1}{p} \sum_{j=1}^p \P_\theta \left(|Y_{j\tilde{k}} | > \bar{\Phi}^{-1}(q/2)\right) \\
\nonumber
&= (1-\e) \P(|Z| > 2\mu) + \e \P(|\mu+Z|>2\mu) \\
\label{altexpbd}
&= (1-\e)2\bar{\Phi}(2\mu) + \e (\bar{\Phi}(\mu) + \bar{\Phi}(3\mu)) \\
\label{altvarbd}
\var_\theta(\bar{S}_{p,\tilde{k}}(q)) &\leq \frac{1}{p^2} \sum_{j=1}^{p} \P_{\theta}\left(|Y_{j\tilde{k}} | > \bar{\Phi}^{-1}(q/2)\right) = p^{-1}\E_{\theta}\bar{S}_{p,\tilde{k}}(q),
\end{align}
since $Y_{j\tilde{k}}$ are identically distributed across $j=1,\dots,p$. It follows that
\begin{align*}
\frac{\var_\theta(\bar{S}_{p,\tilde{k}}(q))}{(\E_\theta \bar{S}_{p,\tilde{k}}(q))^2} &\leq \frac{1}{p \E_\theta \bar{S}_{p,\tilde{k}}(q)} \\
&= \frac{1}{p\big((1-\e)2\bar{\Phi}(2\mu) + \e (\bar{\Phi}(\mu)+\bar{\Phi}(3\mu))\big)} \\
&\leq \frac{1}{p^{1-\beta-\frac{1}{2}(a-(1-2\beta_1))(1+o(1))}} \to 0,
\end{align*}
where the last inequality follows from Mill's ratio, $\bar{\Phi}(\mu) \geq p^{-\frac{1}{2}(a-(1-2\beta_1))(1+o(1))}$. The convergence to zero follows since the inequality $1-\beta-\frac{1}{2}(a-(1-2\beta_1)) > 0$ is implied by the inequalities $a \leq 1-4\beta_1/3$, $\beta < \beta_1$, and $\beta_1 \leq \frac{3}{4}$ (if $\beta_1 > \frac{3}{4}$, then $0\leq a \leq 1-4\beta_1/3$ could not be satisfied). 

There are two possibilities that determine the behavior of the term $\E_\theta \bar{S}_{p,\tilde{k}}(q)$,
\begin{align}
\label{1case3expS}
-2(a-(1-2\beta_1)) \geq -\beta - \frac{1}{2}(a-(1-2\beta_1)), \\
\label{2case3expS}
-2(a-(1-2\beta_1)) < -\beta - \frac{1}{2}(a-(1-2\beta_1)).
\end{align}

First suppose (\ref{1case3expS}) holds, so that $\mu \sim \rho= \sqrt{(a-(1-2\beta_1))\log p}$ and $\e \bar{\Phi}(\mu) \leq 2\bar{\Phi}(2\mu)$ together with \eqref{altexpbd} implies
\begin{align*}
    2\bar{\Phi}(2\mu)(1+o(1))\leq\E_{\theta}\bar{S}_{p,\tilde{k}}(q) \leq 4\bar{\Phi}(2\mu)(1+o(1)).
\end{align*}
It then follows from Chebyshev's inequality (since $\frac{\var_\theta(\bar{S}_{p,\tilde{k}}(q))}{(\E_\theta \bar{S}_{p,\tilde{k}}(q))^2} \to 0$) that 
\begin{align*}
    \frac{ \bar{S}_{p,\tilde{k}}(q)}{q} = \frac{ \bar{S}_{p,\tilde{k}}(q)}{2\bar{\Phi}(2\mu)} \leq \frac{4}{2} + o_{\P_\theta}(1),
\end{align*}
which implies $\frac{ \bar{S}_{p,\tilde{k}}(q)}{q} \leq 4$ on a set with probability tending to 1. Then Part 2 of Lemma \ref{kbehave} implies
\begin{align}
\label{asymp:high-prob-lb}
    \P_\theta \left( K(\bar{S}_{p,\tilde{k}}(q),q) \geq \frac{(\bar{S}_{p,\tilde{k}}(q)-q)^2}{9q} \right) \to 1.
\end{align}
Note by (\ref{altexpbd}), (\ref{altvarbd}), and (\ref{1case3expS}) that 
\begin{align*}
\sqrt{\var_\theta(\bar{S}_{p,\tilde{k}}(q))} &\leq \sqrt{\frac{(1-\e)2\bar{\Phi}(2\mu)+\e(\bar{\Phi}(\mu)+\bar{\Phi}(3\mu))}{p}} \\
&\sim \sqrt{ p^{-1-2(a-(1-2\beta_1))(1+o(1))}} = o(\E_\theta \bar{S}_{p,\tilde{k}}(q) - q),
\end{align*}
where the last equality follows because by Mill's ratio, 
\begin{align*}
    \E_\theta \bar{S}_{p,\tilde{k}}(q) - q = \e \bar{\Phi}(\mu)-\e \bar{\Phi}(2\mu) + \e \bar{\Phi}(3\mu) \sim \e \bar{\Phi}(\mu) = p^{-\beta-\frac{1}{2}(a-(1-2\beta_1))(1+o(1))}
\end{align*}
and since $\beta_1 > \beta$ implies
\begin{align*}
\frac{-1-2(a-(1-2\beta_1))}{2} < -\beta - \frac{1}{2}(a-(1-2\beta_1)).
\end{align*}
It now follows from another application of Chebyshev's inequality that
\begin{align*}
\frac{(\bar{S}_{p,\tilde{k}}(q) -q)^2}{9q} &= \frac{(\bar{S}_{p,\tilde{k}}(q) - \E_\theta \bar{S}_{p,\tilde{k}}(q)+\E_\theta \bar{S}_{p,\tilde{k}}(q)-q)^2}{9q} \\
&= \frac{(\E_\theta \bar{S}_{p,\tilde{k}}(q)-q)^2}{9q} (1+o_{\P_\theta}(1)).
\end{align*}
By Mill's ratio, $q = 2\bar{\Phi}(2\mu) \leq 2 p^{-2(a-(1-2\beta_1))(1+o(1))}$, so the above and \eqref{asymp:high-prob-lb} imply that on a set with probability tending to 1,
\begin{align*}
K(\bar{S}_{p,\tilde{k}}(q),q) &\geq  \frac{1}{18} \cdot p^{2(a-(1-2\beta_1))(1+o(1)) -2\beta-(a-(1-2\beta_1))(1+o(1))} = \omega(p^{a-1}),
\end{align*}
since $\beta < \beta_1$, and (\ref{type2crit}) now follows.

Now suppose that (\ref{2case3expS}) holds, so that $\E_\theta \bar{S}_{p,\tilde{k}}(q) = \e \bar{\Phi}(\mu) (1+o(1))$. Since $\frac{\var_\theta(\bar{S}_{p,\tilde{k}}(q))}{(\E_\theta \bar{S}_{p,\tilde{k}}(q))^2} \to 0$, it follows that,
\begin{align*}
\frac{\bar{S}_{p,\tilde{k}}(q)}{q} &= \frac{\E_\theta \bar{S}_{p,\tilde{k}}(q)(1+o_{\P_\theta}(1))}{2\bar{\Phi}(2\mu)} \tag{Chebyshev}\\
&= \frac{\e \bar{\Phi}(\mu)(1+o_{\P_\theta}(1))}{2\bar{\Phi}(2\mu)} \tag{by (\ref{2case3expS})}\\
&\geq p^{-\beta-\frac{1}{2}(a-(1-2\beta_1))(1+o(1))+2(a-(1-2\beta_1))(1+o(1))}(1+o_{\P_\theta}(1)) \tag{Mill's ratio}\\
&\to \infty,\tag{by \eqref{2case3expS}} \\
\bar{S}_{p,\tilde{k}}(q) &= (\E_{\theta}\bar{S}_{p,\tilde{k}}(q))(1+o_{\P_\theta}(1)) \to 0. \tag{Chebyshev}
\end{align*}
Thus, on a set with probability tending to 1, we have both $e^2\leq \frac{\bar{S}_{p,\tilde{k}}(q)}{q}$ and $\frac{1}{2}\E_\theta \bar{S}_{p,\tilde{k}}(q)\leq \bar{S}_{p,\tilde{k}}(q) \leq \frac{1}{2}$. On this set, by Part 3 of Lemma \ref{kbehave},
\begin{align*}
K(\bar{S}_{p,\tilde{k}}(q),q) &\geq \frac{1}{2} \bar{S}_{p,\tilde{k}}(q) \log \frac{\bar{S}_{p,\tilde{k}}(q)}{q} \\
&\geq \frac{1}{2} \E_\theta \bar{S}_{p,\tilde{k}}(q) \tag{$\log e^2=2$}\\
&= \frac{1}{2} \e \bar{\Phi}(\mu)(1+o(1)) \\
&\geq \frac{1}{2} p^{-\beta-\frac{1}{2}(a-(1-2\beta_1))(1+o(1))}  = \omega(p^{a-1}),
\end{align*}
since $-\beta - \frac{1}{2}(a-(1-2\beta_1)) > a-1$ is implied by $a \leq 1-4\beta_1/3 < 1$ and $\beta < \beta_1$. Thus, (\ref{type2crit}) also holds in this case.

\subsubsection*{Case 3: $1-4\beta_1/3 < a \leq 1-\beta_1$}

In this case, put $x \coloneqq \sqrt{1-a}$ and $y \coloneqq \sqrt{1-a-\beta_1}$. Then,
\begin{align*}
\rho &= (\sqrt{1-a}-\sqrt{1-a-\beta_1})\sqrt{2\log p} \to \infty, \\
q &\coloneqq 2\bar{\Phi}(x\sqrt{2\log p}) \to 0.
\end{align*}
Then since $\mu \sim \rho=(x-y)\sqrt{2\log p}$,
\begin{align*}
\E_\theta \bar{S}_{p,\tilde{k}}(q) &= \frac{1}{p} \sum_{j=1}^p \P_\theta\left( |Y_{j\tilde{k}}| > \bar{\Phi}^{-1}(q/2)\right) \\
&= (1-\e) 2\bar{\Phi}(x\sqrt{2\log p}) + \e \P(|\mu + Z| > x\sqrt{2\log p}) \\
&= (1-\e) 2 \bar{\Phi}(x\sqrt{2\log p}) + \e \left(\bar{\Phi}((y+o(1))\sqrt{2\log p}) + \Phi((y-2x)(1+o(1))\sqrt{2\log p})\right) \\
&\sim 2p^{-x^2(1+o(1))} + p^{-\beta -y^2+o(1)}(1+o(1)) \tag{$y<2x-y$}\\
&\asymp p^{-\beta-y^2+o(1)},
\end{align*}
since $\beta<\beta_1$. Next notice that
\begin{align*}
\frac{\var_\theta(\bar{S}_{p,\tilde{k}}(q))}{(\E_\theta \bar{S}_{p,\tilde{k}}(q))^2} \leq \frac{1}{p \E_\theta \bar{S}_{p,\tilde{k}}(q)} \sim \frac{1}{2p^{1-x^2(1+o(1))}+p^{1-\beta-y^2+o(1)}} \to 0,
\end{align*}
since $y^2 = 1-a-\beta_1$ and $\beta < \beta_1$. It now follows from Chebyshev's inequality and Mill's ratio that
\begin{align*}
\frac{\bar{S}_{p,\tilde{k}}(q)}{q} = \frac{\E_\theta\bar{S}_{p,\tilde{k}}(q) }{q} (1+o_{\P_\theta}(1))\asymp p^{-\beta-y^2+o(1) + (1-a)(1+o(1))}(1+o_{\P_\theta}(1))\to\infty,
\end{align*}
since $y^2=1-a-\beta_1$ and $\beta < \beta_1$. Then on a set with probability tending to 1, we have $e^2 \leq \frac{\bar{S}_{p,\tilde{k}}(q)}{q}$ and $\frac{1}{2}\E_\theta \bar{S}_{p,\tilde{k}}(q) \leq \bar{S}_{p,\tilde{k}}(q) \leq \frac{1}{2}$. On this set, by Part 3 of Lemma \ref{kbehave},
\begin{align*}
K(\bar{S}_{p,\tilde{k}}(q),q) &\geq \frac{1}{2} \bar{S}_{p,\tilde{k}}(q) \log \frac{\bar{S}_{p,\tilde{k}}(q)}{q} \\
&\geq \frac{1}{2} \E_\theta \bar{S}_{p,\tilde{k}}(q) \tag{$\log e^2=2$} \\
&\geq \frac{1}{2} p^{-\beta-y^2+o(1)}(1+o(1)) \\
&=\frac{1}{2} p^{-\beta-(1-a-\beta_1)+o(1)}(1+o(1))  = \omega(p^{a-1}),
\end{align*}
since $\beta < \beta_1$. (\ref{type2crit}) now follows.

\subsubsection*{Case 4: $a > 1-\beta_1$}

In this case, $\rho^2 = p^{a-(1-\beta_1)} \to \infty$. Put $q \coloneqq 2\bar{\Phi}(\mu/2) \leq e^{-\frac{1}{8}(1+o(1))p^{a-(1-\beta_1)}} \to 0$. Then,
\begin{align*}
\E_\theta \bar{S}_{p,\tilde{k}}(q) &= \frac{1}{p} \sum_{j=1}^p \P_\theta(|Y_{j\tilde{k}}| > \mu/2) \\
&= (1-\e) 2\bar{\Phi}(\mu/2) + \e \P(|\mu+Z| > \mu/2) \\
&= (1-\e) 2\bar{\Phi}(\mu/2) + \e (\bar{\Phi}(-\mu/2)+\bar{\Phi}(3\mu/2)) \\
&\sim \e \bar{\Phi}(-\mu/2) = \e (1+o(1)),
\end{align*}
since $\bar{\Phi}(-\mu/2)\to 1$, $\bar{\Phi}(3\mu/2) \to 0$ and $\bar{\Phi}(\mu/2) \to 0$. The variance satisfies
\begin{align*}
\frac{\var_\theta (\bar{S}_{p,\tilde{k}}(q))}{(\E_\theta \bar{S}_{p,\tilde{k}}(q))^2} &\leq \frac{1}{ p \E_\theta \bar{S}_{p,\tilde{k}}(q)} = p^{-1+\beta}(1+o(1)) \to 0.
\end{align*}
Then we have by Chebyshev's inequality and Mill's ratio that,
\begin{align*}
\frac{\bar{S}_{p,\tilde{k}}(q)}{q} &= \frac{\E_\theta \bar{S}_{p,\tilde{k}}(q)}{q}(1+o_{\P_\theta}(1)) \\
&\geq \frac{\e}{e^{-\frac{1}{8}(1+o(1))p^{a-(1-\beta_1)}}}(1+o_{\P_\theta}(1)) \\
&\to \infty,
\end{align*}
since $a > 1-\beta_1$. Then on a set with probability tending to 1, we have $e^2 \leq \frac{\bar{S}_{p,\tilde{k}}(q)}{q}$ and $\frac{1}{2} \E_\theta \bar{S}_{p,\tilde{k}}(q) \leq \bar{S}_{p,\tilde{k}}(q) \leq \frac{1}{2}$. On this set, by Part 3 of Lemma \ref{kbehave},
\begin{align*}
K(\bar{S}_{p,\tilde{k}}(q),q) &\geq \frac{1}{2} \bar{S}_{p,\tilde{k}}(q) \log \frac{\bar{S}_{p,\tilde{k}}(q)}{q} \\
&\geq \frac{1}{4} (\E_\theta \bar{S}_{p,\tilde{k}}(q))\log \frac{\frac{1}{2}\E_\theta \bar{S}_{p,\tilde{k}}(q)}{e^{-\frac{1}{8}(1+o(1))p^{a-(1-\beta_1)}}} \\
&\geq \frac{1}{4} (\E_\theta \bar{S}_{p,\tilde{k}}(q))\log \left(e^{\frac{1}{8}(1+o(1))p^{a-(1-\beta_1)}}\right) \\
&= \frac{1}{32}(1+o(1)) p^{-\beta+a-(1-\beta_1)} = \omega(p^{a-1}),
\end{align*}
since $\beta<\beta_1$. Now (\ref{type2crit}) follows.

\subsection{Upper Bound for Theorem \ref{main1}}
\label{UBmain1}

As discussed in the beginning of Appendix \ref{proofs}, it suffices to show the upper bound in the case $a \leq 1-2\beta_1$, as the upper bound for the other cases is implied by the calculation in the previous section. It suffices to show that for any $\theta \in \Theta_1^{\mathrm{1-side}}(p,n,\rho,s)$, the Type I and II errors of the penalized Berk--Jones statistic tend to zero as $p\to\infty$. The test statistic is the same as for the two-sided problem, except $\bar{S}_{p,k}(q)$ is defined to count the fraction of the contrasts that exceed a threshold,
\begin{align*}
\bar{S}_{p,k} \coloneqq \frac{1}{p} \sum_{j=1}^p \textbf{1}_{\{Y_{jk} > \bar{\Phi}^{-1}(q)\}}.
\end{align*}
Since the one-sided $p$-values $p_{k,j} \coloneqq \bar{\Phi}(Y_{jk})$ are distributed as iid Uniform$(0,1)$ variables under the null, the proof of the Type I error is exactly the same as in the two sided setting. We proceed to show that the Type II error also goes to zero when $a \leq 1-2\beta_1$.

\subsubsection*{Type II error}
Suppose $\theta \in \Theta_1^{\mathrm{1-side}}(p,n,\rho,s)$, with $\beta < \beta_1$ as in the statement of the Theorem. It suffices to pick $k\leq |\T|$ and $q > 0$ for which (\ref{type2crit}) is satisfied. Denote by $t^*$ the true changepoint location in $\theta$. By the discussion in Appendix \ref{UBmain2} about threshold tests (see (\ref{threshtest})), assume that the lower bounds $\rho$ and $s$ on the signal size and sparsity in the definition of $\Theta_1^{\mathrm{1-side}}(p,n,\rho,s)$ are achieved. Without loss of generality, assume that $t^* \leq n/2$. By Lemma \ref{closegrid}, there exists some $\tilde{t} \coloneqq \lfloor (1+\d)^{\tilde{k}} \rfloor \in \T$ for which
\begin{align}
\label{murho2}
Y_{j\tilde{k}} = \mu + Z \text{ where }\mu=(1+o(1))\rho,
\end{align}
when $j$ corresponds to a non-null row in $\theta$, and $\rho \coloneqq \rho^*_{\mathrm{1-side}}(a,\beta_1)$, and $Z \sim N(0,1)$. When $j$ corresponds to a null row in $\theta$,
\begin{align*}
Y_{jk} \sim N(0,1) \hspace{2em} \text{for all } k = 1,\dots,|\T|.
\end{align*}
Throughout the following calculation, $\mu$ refers to the mean satisfying (\ref{murho2}). 

\subsubsection*{Case 1: $a \leq 1-2\beta_1$}
First suppose $a < 1-2\beta_1$. In this case, $\mu \sim \rho = p^{\frac{a-(1-2\beta_1)}{2}} \to 0$. Put $q = \bar{\Phi}(2\mu) \to \frac{1}{2}$. Then
\begin{align*}
\E_\theta \bar{S}_{p,\tilde{k}}(q) &= \frac{1}{p} \sum_{j=1}^p \P_\theta(Y_{j\tilde{k}} > \bar{\Phi}^{-1}(q)) \\
&= (1-\e) \P(Z > 2\mu) + \e \P(\mu+Z>2\mu) \\
&= (1-\e) \P(Z > 2\mu) + \e \P(Z>\mu) \to \frac{1}{2} , \\
\var_\theta(\bar{S}_{p,\tilde{k}}) &\leq \frac{1}{p^2} \sum_{j=1}^p \P_\theta(Y_{j\tilde{k}}>\bar{\Phi}^{-1}(q)) \leq p^{-1}.
\end{align*}
Then since $\frac{\var_\theta(\bar{S}_{p,\tilde{k}}(q))}{(\E_\theta \bar{S}_{p,\tilde{k}}(q))^2} \to 0$, it follows by Chebyshev's inequality that 
\begin{align*}
\P_\theta\left(|\bar{S}_{p,\tilde{k}}(q)-\E_\theta \bar{S}_{p,\tilde{k}}(q)| \leq p^{-1/2}\log p\right) \to 1,
\end{align*}
By the Mean Value Theorem,
\begin{align*}
\E_\theta \bar{S}_{p,\tilde{k}}(q) - q &= (1-\e) \bar{\Phi}(2\mu) + \e \bar{\Phi}(\mu) - \bar{\Phi}(2\mu) \\
&= \e (\bar{\Phi}(\mu)-\bar{\Phi}(2\mu)) \\
&= \e (\phi(0)\mu + o(\mu)) \\
&= \phi(0) p^{-\beta+\frac{a-(1-2\beta_1)}{2}}(1+o(1)) \\
&= \omega(p^{-1/2}\log p)\tag{$\beta<\beta_1$},
\end{align*}
so that $\E_\theta \bar{S}_{p,\tilde{k}}(q) - q$ is of larger order than $\bar{S}_{p,\tilde{k}}(q)-\E_\theta \bar{S}_{p,\tilde{k}}(q)$ on a set with probability tending to 1. By Part 1 of Lemma \ref{kbehave}, 
\begin{align*}
K(\bar{S}_{p,\tilde{k}}(q),q) &\geq 2(\bar{S}_{p,\tilde{k}}(q)-q)^2 \\
&= 2(\bar{S}_{p,\tilde{k}}(q)-\E_\theta \bar{S}_{p,\tilde{k}}(q)+\E_\theta \bar{S}_{p,\tilde{k}}(q)-q)^2 \\
&= 2(\E_\theta \bar{S}_{p,\tilde{k}}(q)-q)^2 (1+o_{\P_\theta}(1)) \\
&= 2\phi(0)^2 p^{-2\beta+a-(1-2\beta_1)}(1+o_{\P_\theta}(1)) \\
&= \omega (p^{a-1}),
\end{align*}
since $\beta < \beta_1$. Hence, (\ref{type2crit}) holds.

In the case $a = 1-2\beta_1$, we have $\mu \sim \rho = 1$. Put $q \coloneqq \bar{\Phi}(\mu)$. Then by the same calculation as in the previous case,
\begin{align*}
\E_\theta \bar{S}_{p,\tilde{k}}(q) &= (1-\e)\P(Z>\mu) + \e \P(\mu+Z > \mu) \\
&= (1-\e) \bar{\Phi}(\mu) + \e \bar{\Phi}(0) \\
\var_\theta (\bar{S}_{p,\tilde{k}}(q)) &\leq p^{-1}.
\end{align*}
Then,
\begin{align*}
\E_\theta \bar{S}_{p,\tilde{k}}(q) - q &= \e (\bar{\Phi}(0)-\bar{\Phi}(\mu)) \gtrsim p^{-\beta} = \omega(p^{-1/2}\log p),
\end{align*}
since $\beta < \beta_1 = \frac{1-a}{2} < \frac{1}{2}$. By Chebyshev's inequality,
\begin{align*}
\P_\theta \left( |\bar{S}_{p,\tilde{k}}(q)-\E_\theta \bar{S}_{p,\tilde{k}}(q)| \leq p^{-1/2}\log p \right) \to 1,
\end{align*}
so that $\E_\theta \bar{S}_{p,\tilde{k}}(q) - q$ is of larger order than $\bar{S}_{p,\tilde{k}}(q)-\E_\theta \bar{S}_{p,\tilde{k}}(q)$ on a set with probability tending to 1. Thus by Part 1 of Lemma \ref{kbehave},
\begin{align*}
K(\bar{S}_{p,\tilde{k}}(q),q) &\geq 2(\E_\theta \bar{S}_{p,\tilde{k}}(q)-q)^2 (1+o_{\P_\theta}(1)) \\
&\gtrsim p^{-2\beta}(1+o_{\P_\theta}(1)).
\end{align*}
Now since $-2\beta-(a-1) > 1-2\beta_1-a=0$, we must have
\begin{align*}
    K(\bar{S}_{p,\tilde{k}}(q),q) \gtrsim p^{-2\beta-(a-1)+(a-1)} = \omega(p^{1-2\beta_1-a+(a-1)}) = \omega(p^{a-1})
\end{align*}
on a set with probability tending to 1, so condition (\ref{type2crit}) is satisfied.

\subsection{Lower Bound for Theorem \ref{main3}}
\label{LBmain3}

Recall the formula $\rho_2^*(a,\beta) = \sqrt{2r_2^*(a,\beta)\log p}$, where
\begin{align*}
r_2^*(a,\beta) &\coloneqq \begin{cases}
\beta-1/2 \hspace{1em} &1/2 < \beta \leq 3/4 \\
(1-\sqrt{1-\beta})^2 &3/4<\beta < 1\\
1+a &\beta = 1,
\end{cases}
\end{align*}
and the asymptotic setting,
\begin{align*}
\log\log n &\sim a\log p \hspace{2em} a > 0 \\
\nonumber
s &\sim p^{1-\beta} \hspace{2em} 1/2 < \beta \leq 1.
\end{align*}
Put $\rho \coloneqq \sqrt{2r\log p}$, where $0<r < r^*_2(a,\beta)$ as in the statement of the theorem. We show below that if $\beta \in (\frac{1}{2},1]$, then $\mathcal{R}_{\mathrm{1-side}}(p,n,\rho,s) \to 1$ as $p \to \infty$. A different prior on $\theta$ is used when $\beta \in (\frac{1}{2},1)$ versus when $\beta = 1$. 
\subsubsection*{Cases 1 and 2: $\beta \in (1/2,3/4]$ and $\beta \in (3/4,1)$}
Since $0 < r < r_2^*(a,\beta)$, which is increasing in $\beta$, there exists some $\beta_1 \in (1/2,\beta)$ for which $r_2^*(a,\beta_1) = r$. Consider the testing problem,
\begin{align*}
&H_0: X_{j:} \stackrel{\iid}{\sim} N(0,I_n) \\
&\bar{H}_1: X_{j:} \stackrel{\iid}{\sim} (1-\bar{\e}) N(0,I_n) + \bar{\e} N(\rho \theta^{(1)},I_n),
\end{align*}
where $\bar{\e} \coloneqq p^{-\bar{\beta}}$ with $\bar{\beta}\in(\beta_1,\beta)$, and $\theta^{(1)}$ is defined as in Appendix \ref{LBmain1}, and the reason for the choice $\bar{\beta}\in(\beta_1,\beta)$ is discussed in the beginning of Appendix \ref{LBmain1}. Since $\theta^{(1)}$ is a unit vector (see Lemma \ref{thetas}), there exist unit vectors $u_2,\dots,u_n \in \R^n$ such that $\{\theta^{(1)},u_2,\dots,u_n\}$ is an orthonormal basis for $\R^n$. Then each row $X_{j:}$ can be rewritten in this basis,
\begin{align}
\label{newcoord}
X_{j:} \equiv (X_{j:}^\top \theta^{(1)},X_{j:}^\top u_2,\dots,X_{j:}^\top u_n).
\end{align}
Since $u_i^\top \theta^{(1)} = 0$, we have $X_{j:}^\top u_i \stackrel{\iid}{\sim} N(0,1)$ under both $H_0$ and $H_1$ for each $i=2,\dots,n$, and
\begin{align*}
X_{j:}^{\top} \theta^{(1)} &\stackrel{\iid}{\sim} \begin{cases}
N(0,1) \hspace{1em} &\text{under } H_0 \\
(1-\bar{\e})N(0,1)+\bar{\e}N(\rho,1) &\text{under }H_1.
\end{cases} 
\end{align*}
Hence only the first coordinate $X_{j:}^\top \theta^{(1)}$ is informative for testing between $H_0$ and $H_1$, and the remaining coordinates in (\ref{newcoord}) can be ignored. Recall that $\bar{\beta} > \beta_1$, and that $\rho = \sqrt{2r\log p} = \sqrt{2r_2^*(a,\beta_1)\log p}$ is the minimal signal strength needed to detect a non-null fraction $p^{-\beta_1}$ in the classical setting of Ingster--Donoho--Jin. If the true sparsity level is $\bar{\e} = p^{-\bar{\beta}}$, then $\mathcal{R}_{\mathrm{1-side}}(p,n,\rho,s) \to 1$ as a consequence of the original Ingster--Donoho--Jin detection boundary.

\subsubsection*{Case 3: $\beta=1$}
In this case, $\rho = \sqrt{2r\log p} < \sqrt{2(1+a)\log p}$. We show that $\mathcal{R}_{\mathrm{1-side}}(p,n,\rho,s=1) \to 1$ as $p\to \infty$. Consider the testing problem,
\begin{align*}
H_0: X_{j:} \stackrel{\iid}{\sim} N(0,I_n) \hspace{1em} \mathrm{vs}\hspace{1em} H_1: k &\sim \mathrm{Unif} \{1,\dots,|\T|\} \\
i&\sim \mathrm{Unif}\{1,\dots,p\} \\
X_{j:} \mid k,i &\stackrel{\mathrm{indep.}}{\sim} \begin{cases}
N(\rho \theta^{(k)},I_n) \hspace{1em} &j = i\\
N(0,I_n) &j \neq i,
\end{cases} 
\end{align*}
for $j=1,\dots,p$, where the grid $\T$ is defined,
\begin{align*}
\mathcal{T} \coloneqq \{\lfloor b^1\rfloor,\lfloor b^2 \rfloor, \dots, \lfloor b^{\log_{\log n}n}\rfloor \},
\end{align*}
with base $b = \log n$, and the $\theta^{(k)}$ are defined as in previous sections (see Lemma \ref{thetas}). Note that the cardinality of this grid is $|\T| \asymp \frac{\log n}{\log\log n} = p^{a(1+o(1))}$, according to the calibration (\ref{2log}). It suffices to show the following two conditions,
\begin{align}
\label{trunc1}
\P_1(A_p^c) &\to 0 \\
\label{trunc2}
\limsup_{p\to\infty} \E_0\left(\frac{f_1}{f_0}(X) \right)^2\textbf{1}_{A_p} &\leq 1,
\end{align}
for a suitable truncation event $A_p$, where $f_0$ and $f_1$ are the densities of $X$ corresponding to $H_0$ and $\bar{H}_1$ respectively. Indeed, if (\ref{trunc1}) and (\ref{trunc2}) hold, then by monotonicity and Markov's inequality,
\begin{align*}
\P_0\left(\left|\frac{f_1}{f_0}(X)-1 \right| > \eta \right) &\leq \P_0 \left(\left|\frac{f_1}{f_0}(X)\textbf{1}_{A_p}-1 \right| > \eta \right) \tag{$\eta\in(0,1)$}\\
&\leq \frac{\E_0\left( \frac{f_1}{f_0}(X) \right)^2 \textbf{1}_{A_p} - 2\E_0 \frac{f_1}{f_0}(X) \textbf{1}_{A_p} +1}{\eta^2} \to 0,
\end{align*}
since $\P_1(A_p^c) \to 0$ is equivalent to $\E_0 \frac{f_1}{f_0}(X)\textbf{1}_{A_p} \to 1$. Thus (\ref{trunc1}) and (\ref{trunc2}) imply $\frac{f_1}{f_0}(X) \stackrel{\P_0}{\longrightarrow} 1$, which implies $H_0$ and $H_1$ are asymptotically hard to distinguish (see Appendix \ref{LBmain1}). 

To obtain (\ref{trunc1}) and (\ref{trunc2}), define the truncation event,
\begin{align*}
A_p \coloneqq \left\{\max_{k\leq |\T|, j\leq p} \langle X_{j:},\theta^{(k)}\rangle \leq \sqrt{2(1+a+\gamma)\log p} \right\},
\end{align*}
for some small constant $\gamma$ satisfying
\begin{align}
\label{gammareq}
0 < \gamma < 4r - (1+a),
\end{align}
the existence of which we may assume without loss of generality, since distinguishing between $H_0$ and $H_1$ is even more difficult if $4r \leq 1+a$. Next, we check that (\ref{trunc1}) holds,
\begin{align*}
\P_1(A_p^c) &= \frac{1}{p|\T|} \sum_{k=1}^{|\T|} \sum_{i=1}^p \P\left(\max_{m\leq |\T|,j\leq p} \langle \rho \theta^{(k)}\textbf{1}_{\{i=j\}} + Z_{j:},\theta^{(m)}\rangle > \sqrt{2(1+a+\gamma)\log p} \right) \\
&= \frac{1}{p|\T|} \sum_{k=1}^{|\T|} \sum_{i=1}^p \sum_{m=1}^{|\T|} \sum_{j=1}^p \P\big(\langle \rho \theta^{(k)}\textbf{1}_{\{i=j\}}+Z_{j:},\theta^{(m)}\rangle > \sqrt{2(1+a+\gamma)\log p}\big),
\end{align*}
where the probability measure $\P$ on the right hand side refers to $Z_{j:} \stackrel{\iid}{\sim}N(0,I_n)$. Applying Part 3 of Lemma \ref{thetas} with $b =\log n \asymp p^{a(1+o(1))}$, we have $\langle \theta^{(k)},\theta^{(m)}\rangle \leq p^{-\frac{a|k-m|}{2}}$, which is $o(1/\rho)$ for any $k\neq m$. Thus the sum can be split into three pieces,
\begin{align*}
&\leq \frac{1}{p|\T|} \sum_{(k,m,i,j):j=i,m=k} \bar{\Phi}\big(\sqrt{2(1+a+\gamma)\log p}-\rho \big) \\
&+ \frac{1}{p|\T|} \sum_{(k,m,i,j):j=i,m\neq k} \bar{\Phi}\big(\sqrt{2(1+o(1))(1+a+\gamma)\log p} \big) \\
&+ \frac{1}{p|\T|} \sum_{(k,m,i,j):j\neq i} \bar{\Phi}\big(\sqrt{2(1+a+\gamma)\log p} \big),
\end{align*}
where we have used that $\langle Z_{j:},\theta^{(m)} \rangle \sim N(0,1)$ for every $m \leq |\T|$ and $j\leq p$. Applying Mill's ratio to the above tail probabilities, and noting that $|\T| \asymp \frac{\log n}{\log\log n} = p^{a(1+o(1))}$ according to the calibration (\ref{2log}), we find that the above is bounded by
\begin{align*}
&\leq \frac{1}{p^{1+a(1+o(1))}} p^{1+a(1+o(1))} p^{-(\sqrt{1+a+\gamma}-\sqrt{r})^2} \\
&+ \frac{1}{p^{1+a(1+o(1))}} p^{1+2a(1+o(1))}p^{-(1+a+\gamma)(1+o(1))} \\
&+ \frac{1}{p^{1+a(1+o(1))}} p^{2(1+a(1+o(1)))}p^{-(1+a+\gamma)} \to 0,
\end{align*}
since $1+a+\gamma > r$ and $\gamma > 0$. To check (\ref{trunc2}), we compute the truncated second moment,
\begin{align*}
\E_0 \left( \frac{f_1}{f_0}(X) \right)^2\textbf{1}_{A_p} &= \E_1 \frac{f_1}{f_0}(X) \textbf{1}_{A_p} \\
&= \frac{1}{p|\T|}  \sum_{k=1}^{|\T|} \sum_{i=1}^p \underbrace{\E_1\big(e^{\rho \langle X_{i:},\theta^{(k)}\rangle - \rho^2/2} \textbf{1}_{A_p}\big)}_{(*)}.
\end{align*}
The term $(*)$ is,
\begin{align*}
(*) &= \frac{1}{p|\T|} \sum_{l=1}^{|\T|} \sum_{j=1}^p \E \left(e^{\rho\langle \rho \theta^{(l)}\textbf{1}_{\{i=j\}}+Z_{i,:},\theta^{(k)}\rangle - \rho^2/2}\textbf{1}_{A_p}\right) \tag{$Z_{i,:} \stackrel{\iid}{\sim}N(0,I_n)$}\\
&\leq \frac{1}{p|\T|} \sum_{l=1}^{|\T|} \left[ (p - 1) + e^{\rho^2 \langle \theta^{(l)},\theta^{(k)}\rangle }\E e^{ \rho U-\rho^2/2}\textbf{1}_{\{\rho \langle \theta^{(l)},\theta^{(k)}\rangle + U \leq \sqrt{2(1+a+\gamma)\log p}\}} \right] \tag{$U\sim N(0,1)$} \\
&= \frac{(p-1)|\T|}{p|\T|} + \frac{1}{p|\T|}\left( e^{\rho^2}\E e^{\rho U - \rho^2/2}\textbf{1}_{\{U \leq \sqrt{2(1+a+\gamma)\log p}-\rho\}} + \sum_{l\leq |\T|: l \neq k}^{|\T|} (1+c_p) \right),
\end{align*}
where we have used Part 3 of Lemma \ref{thetas} to deduce 
\begin{align*}
    \rho^2 \langle \theta^{(l)},\theta^{(k)}\rangle = \begin{cases}
        c_p \hspace{1em} &\text{when } l\neq k \\
        \rho^2 &\text{when } l=k,
    \end{cases}
\end{align*}
for some positive sequence $c_p \to 0$ This is bounded by
\begin{align*}
&\leq 1 + \frac{1}{p|\T|} \left[ e^{\rho^2} \Phi(\sqrt{2(1+a+\gamma)\log p} - 2\rho) + (|\T|-1) (1+o(1)) \right] \\
&\leq 1 + \frac{1}{p^{1+a(1+o(1))}}\cdot p^{2r} \cdot p^{-(\sqrt{1+a+\gamma}-2\sqrt{r})^2}+o(1),
\end{align*}
where we have used that $\sqrt{1+a+\gamma}-2\sqrt{r} < 0$ is implied by the condition (\ref{gammareq}). The right hand side of the expression for $(*)$ now becomes
\begin{align*}
&= 1 + c_p + p^{-(1+a)(1+o(1)) + 2(1+a)\frac{r}{1+a} - (1+a)\big(\sqrt{1+\frac{\gamma}{1+a}}-2\sqrt{\frac{r}{1+a}}\big)^2} \\
&= 1 + c_p + p^{-(1+a)(1+o(1))\left(1-2\frac{r}{1+a} + \big(\sqrt{1+\frac{\gamma}{1+a}}-2\sqrt{\frac{r}{1+a}}\big)^2\right)}.
\end{align*}
As $\gamma \to 0$, some manipulation will show that the above becomes
\begin{align*}
&= 1+c_p+p^{-2(1+a)(1+o(1))\big(1-\sqrt{\frac{r}{1+a}}\big)^2+O(\gamma)},
\end{align*}
It follows from the assumption $r < 1+a$ that $\gamma$ can be chosen small enough to make the exponent negative, yielding $(*) = 1 + o(1)$ uniformly over $(k,i)$ pairs. Plugging this bound back into the truncated second moment gives
\begin{align*}
\E_0 \left( \frac{f_1}{f_0}(X) \right)^2\textbf{1}_{A_p} &= \E_1 \frac{f_1}{f_0}(X) \textbf{1}_{A_p} \\
&\leq \frac{1}{p|\T|}  \sum_{k=1}^{|\T|} \sum_{i=1}^p \left( 1+c_p+p^{-2(1+a)(1+o(1))\big(1-\sqrt{\frac{r}{1+a}}\big)^2+O(\gamma)} \right) \\
&= 1 + o(1),
\end{align*}
as desired.

\subsection{Upper Bound for Theorem \ref{main3}}
\label{UBmain3}

We consider the cases $\beta \in (1/2,1)$ and $\beta=1$ separately; the penalized Berk--Jones test \eqref{eq:pbj-test} gives the upper bound for the former case, while a simple maximum statistic (defined later in expression \eqref{eq:max-test}) gives the upper bound for the latter case. The tests can be combined via $\psi =\psi_{\PBJ} \vee \psi_{\mathrm{max}}$ to give an overall test achieving the upper bound in the theorem.

First let $\beta \in (1/2,1)$. Suppose without loss of generality that $r \in (0,1)$; if $r \geq 1$, then
\begin{align*}
\Theta_1^{\mathrm{1-side}}(p,n,\rho=\sqrt{2r\log p},s) \subseteq \Theta_1^{\mathrm{1-side}}(p,n,\rho=\sqrt{2\log p},s),
\end{align*}
and thus the worst case testing error becomes smaller. Since $r > r^*(a,\beta)$, there exists some $\beta_1 > \beta$ for which $r^*_2(a,\beta_1)=r$. 
Put $\rho \coloneqq \sqrt{2r\log p}$ as in the statement of Theorem \ref{main3}. To show the upper bound, it suffices to show that for any $\theta \in \Theta_1^{\mathrm{1-side}}(p,n,\rho,s)$, the Type I and II errors of the penalized Berk--Jones statistic tend to zero as $p\to \infty$. Here the test statistic is,
\begin{align*}
\PBJ_p \coloneqq \left[\max_{k\leq |\T|} \sup_{q \in (0,1)} pK(\bar{S}_{p,k}(q),q) \right]-2\log |\T|
\end{align*}
where we recall from the beginning of Appendix \ref{UBmain2} that $\mathcal{T}$ and $\bar{S}_{p,k}(q)$ are defined
\begin{align*}
\T &\coloneqq \left\{\lfloor (1+\d)^0\rfloor,\lfloor (1+\d)^1\rfloor,\dots,\lfloor (1+\d)^{\log_{1+\d}\frac{n}{2}}\rfloor,\lfloor n-(1+\d)^{\log_{1+\d}\frac{n}{2}}\rfloor,\dots,\lfloor n-(1+\d)^0\rfloor \right\}, \\
\bar{S}_{p,k} &\coloneqq \frac{1}{p} \sum_{j=1}^p \textbf{1}_{\{Y_{jk} > \bar{\Phi}^{-1}(q)\}},
\end{align*}
where $\d = \frac{1}{\log\log n} \to 0$, and $Y_{jk}$ is the contrast corresponding to the corresponding to the $k^{th}$ element in the grid $t_k \in \T$,
\begin{align}
\label{xtoy}
Y_{jk} \coloneqq \sqrt{\frac{t_k (n-t_k)}{n}}(\bar{X}_{j,1:t_k} - \bar{X}_{j,t_k+1:n}), \hspace{2em} 1\leq j \leq p, \hspace{1em} 1\leq k \leq |\T| .
\end{align}
Note that $|\T| = p^{a(1+o(1))}$ so that $2\log |\T| = 2a(1+o(1))\log p$ in the asymptotic setting (\ref{2log}), which is the setting for this result. The test is performed by checking if the penalized Berk--Jones statistic exceeds the level $2(2+\gamma)\log p$, where $\gamma > 0$ is a small constant,
\begin{align*}
\psi_{\PBJ}(X) \coloneqq \textbf{1}_{\{\PBJ_p > 2(2+\gamma)\log p\}}.
\end{align*}
Since the one sided $p$-values $p_{k,j} \coloneqq \bar{\Phi}(Y_{jk})$ are distributed as iid Uniform$(0,1)$ variables under the null, the proof of the Type I error is exactly the same as in Appendix \ref{UBmain2}. We proceed to show that the Type II error also goes to zero.

For any $\theta \in \Theta_1^{\mathrm{1-side}}(p,n,\rho,s)$, the Type II error is,
\begin{align*}
\P_\theta(\PBJ_p \leq 2(2+\gamma)\log p) &= \P_\theta \left( \max_{k\leq |\T|, q> 0} p K(\bar{S}_{p,k}(q),q) \leq \big(2(2+\gamma)+2a(1+o(1))\big)\log p \right),
\end{align*}
since $|\T| = p^{a(1+o(1))}$. To prove that the Type II error goes to zero, it suffices to pick $k\leq |\T|$ and $q > 0$ such that $K(\bar{S}_{p,k}(q),q) = \omega(p^{-1+\zeta})$ in probability for some $\zeta > 0$, i.e.
\begin{align}
\label{type2crit2}
p^{1-\zeta} K(\bar{S}_{p,k}(q),q) \stackrel{\P_\theta}{\longrightarrow} \infty \hspace{1em} \text{as $p \to \infty$.}
\end{align}
Denote by $t^*$ the true changepoint location in $\theta$. By the discussion in Appendix \ref{UBmain2} about threshold tests (see, e.g. expression (\ref{threshtest})), we may assume that the lower bounds $\rho$ and $s$ on the signal size and sparsity in the definition of $\Theta_1^{\mathrm{1-side}}(p,n,\rho,s)$ are achieved. Without loss of generality, suppose $t^*\leq n/2$; by symmetry of $\T$, an analogous argument can be made for $t^* > n/2$. By Lemma \ref{closegrid}, there exists some $\tilde{t} \coloneqq \lfloor (1+\d)^{\tilde{k}} \rfloor \in \T$ for which
\begin{align}
\label{murho3}
Y_{j\tilde{k}} \sim N(\mu,1) \text{ where } \mu = (1+o(1)) \rho,
\end{align}
when $j$ corresponds to a non-null row in $\theta$. When $j$ corresponds to a null row in $\theta$,
\begin{align*}
Y_{jk} \sim N(0,1) \hspace{2em} \text{for all }  k=1,\dots,|\T|.
\end{align*}
Throughout the following calculations, $\mu$ refers to the mean satisfying (\ref{murho3}). Since $r \in (0,1)$, there are two cases: $r \in (0,\frac{1}{4}]$ and $r \in (\frac{1}{4},1)$. Case 3 is when $\beta = 1$, where it is shown that as long as $r > 1+a$, we have $\mathcal{R}_{\mathrm{1-side}}(p,n,\rho=\sqrt{2r\log p},1) \to 0$.

\subsubsection*{Case 1: $0<r\leq \frac{1}{4}$} 
In this case, $\rho = \sqrt{2r\log p} = \sqrt{(2\beta_1-1)\log p}$, and $\beta_1 \in (\frac{1}{2},\frac{3}{4}]$. Put $q \coloneqq \bar{\Phi}(2\mu) \to 0$. Then,
\begin{align}
\nonumber
\E_\theta \bar{S}_{p,\tilde{k}}(q) &= \frac{1}{p} \sum_{j=1}^p \P_\theta \left( Y_{j\tilde{k}} > \bar{\Phi}^{-1}(q) \right) \\
\nonumber
&= (1-\e) \P(Z > 2\mu) + \e \P(\mu + Z > 2\mu) \\
\label{expmain3}
&= (1-\e) \bar{\Phi}(2\mu) + \e \bar{\Phi}(\mu), \\
\label{varmain3}
\var_\theta(\bar{S}_{p,\tilde{k}}(q)) &\leq \frac{1}{p^2} \sum_{j=1}^p \P_\theta(Y_{j\tilde{k}}>\bar{\Phi}^{-1}(q)) = p^{-1} \E_\theta \bar{S}_{p,\tilde{k}}(q).
\end{align}
Applying Mill's ratio to bound the difference $\E_\theta \bar{S}_{p,\tilde{k}}(q) - q$, we obtain
\begin{align}
\nonumber
\E_\theta \bar{S}_{p,\tilde{k}}(q) - q &= \e(\bar{\Phi}(\mu) - \bar{\Phi}(2\mu)) \\
\label{expqdiffmain3}
&= p^{-\beta}\big(p^{-\frac{1}{2}(2\beta_1-1)(1+o(1))} - p^{-2(2\beta_1-1)(1+o(1))}\big).
\end{align}
It is straightforward to check that the assumptions $\beta < \beta_1$ and $\beta_1 \leq \frac{3}{4}$ imply the following inequalities,
\begin{align*}
\frac{1}{2}(-1-2(2\beta_1-1)) &< -\beta-\frac{1}{2}(2\beta_1-1) \\
\frac{1}{2}\big(-1-\beta-\frac{1}{2}(2\beta_1-1)\big) &< -\beta-\frac{1}{2}(2\beta_1-1).
\end{align*}
Together with (\ref{expmain3}), (\ref{varmain3}), and (\ref{expqdiffmain3}), the above two inequalities imply that,
\begin{align}
\nonumber
\sqrt{\var_\theta(\bar{S}_{p,\tilde{k}}(q))} &\leq \sqrt{p^{-1}\E_\theta \bar{S}_{p,\tilde{k}}(q)}\\
\nonumber
&= \sqrt{p^{-1}\big(p^{-2(2\beta_1-1)(1+o(1))}+p^{-\beta-\frac{1}{2}(2\beta_1-1)(1+o(1))}\big)} \\
\nonumber
&= \sqrt{p^{-1-2(2\beta_1-1)(1+o(1))}+p^{-1-\beta-\frac{1}{2}(2\beta_1-1)(1+o(1))}} \\
\label{var1main3}
&= o(\E_\theta \bar{S}_{p,\tilde{k}}(q)-q).
\end{align}
Next, observe that
\begin{align}
\label{asymp:chebyshev-cond}
\frac{\var_\theta(\bar{S}_{p,\tilde{k}}(q))}{(\E_\theta \bar{S}_{p,\tilde{k}}(q))^2} \leq \frac{1}{p \E_\theta \bar{S}_{p,\tilde{k}}(q)} = \frac{1}{p(p^{-2(2\beta_1-1)(1+o(1))}+p^{-\beta-\frac{1}{2}(2\beta_1-1)(1+o(1))})} \to 0,
\end{align}
since $1-\beta-\frac{1}{2}(2\beta_1-1) > 0$ is implied by the assumptions $\beta < \beta_1$ and $\beta_1 \leq \frac{3}{4}$. 

Similar to the proof of the upper bound in Theorem \ref{main2} (see Appendix \ref{UBmain2}, Case 2), we split the remaining analysis into two further cases, depending on which of the two terms, either $\bar{\Phi}(2\mu)$ or $\e \bar{\Phi}(\mu)$, dominates in expression \eqref{expmain3} for the mean $\E_\theta \bar{S}_{p,\tilde{k}}(q)$. 

\textbf{(Subcase 1).} Suppose that the first term dominates, that is,
\begin{align*}
    \E_\theta \bar{S}_{p,\tilde{k}}(q) = (1-\e) \bar{\Phi}(2\mu) + \e \bar{\Phi}(\mu) \leq 2 \bar{\Phi}(2\mu).
\end{align*}
Together with Chebyshev's inequality and condition \eqref{asymp:chebyshev-cond}, this implies
\begin{align*}
    \frac{\bar{S}_{p,\tilde{k}}(q)}{q} &= \frac{\E_\theta \bar{S}_{p,\tilde{k}}(q)}{q}(1+o_{\P_\theta}(1)) \\
    &\leq \frac{2 \bar{\Phi}(2\mu)}{q}(1+o_{\P_\theta}(1)) = 2(1+o_{\P_\theta}(1)).
\end{align*}
By Part 2 of Lemma \ref{kbehave}, on a set with probability tending to 1, we have
\begin{align*}
    K(\bar{S}_{p,\tilde{k}}(q),q) &\geq \frac{(\bar{S}_{p,\tilde{k}}(q)-q)^2}{9q} \\
    &= \frac{(\bar{S}_{p,\tilde{k}}(q)-\E_\theta \bar{S}_{p,\tilde{k}}(q)+\E_\theta \bar{S}_{p,\tilde{k}}(q)-q)^2}{9q},
\end{align*}
which, by another application of Chebyshev's inequality and \eqref{var1main3}, is equal to
\begin{align*}
    &= \frac{(\E_\theta \bar{S}_{p,\tilde{k}}(q) - q)^2}{9q}(1+o_{\P_\theta}(1)).
\end{align*}
To show (\ref{type2crit2}) holds for $\tilde{k}$ and $q$, by (\ref{expqdiffmain3}) it suffices to check that
\begin{align*}
p \cdot \frac{\big( p^{-\beta-\frac{1}{2}(2\beta_1-1)(1+o(1))} - p^{-\beta-2(2\beta_1-1)(1+o(1))} \big)^2}{p^{-2(2\beta_1-1)(1+o(1))}} \to \infty.
\end{align*}
The above divergence is implied by $1-2\beta-(2\beta_1-1)+2(2\beta_1-1) > 0$, which in turn is implied by the assumption $\beta < \beta_1$.

\textbf{(Subcase 2).} Now suppose that the second term in \eqref{expmain3} dominates, in the sense that, once $p$ exceeds some universal constant, we have
\begin{align*}
    \frac{\E_\theta \bar{S}_{p,\tilde{k}}(q)}{q}=\frac{\E_\theta \bar{S}_{p,\tilde{k}}(q)}{\bar{\Phi}(2\mu)} \geq e^2 + 1. 
\end{align*}
Chebyshev's inequality and \eqref{asymp:chebyshev-cond} now imply that $\P_\theta(e^2 \leq \bar{S}_{p,\tilde{k}}(q)/q) \to 1$. Thus, Part 3 of Lemma \ref{kbehave} can be applied together with Chebyshev's inequality to claim that, on a set with probability tending to 1,
\begin{align*}
    K(\bar{S}_{p,\tilde{k}}(q),q) &\geq \frac{1}{2}  \bar{S}_{p,\tilde{k}}(q) \log \frac{\bar{S}_{p,\tilde{k}}(q)}{q}\tag{Part 3 of Lemma \ref{kbehave}}\\
    &\geq \bar{S}_{p,\tilde{k}}(q) \tag{$\log (\bar{S}_{p,\tilde{k}}(q)/q) \geq 2$}\\
    &= (\E_\theta \bar{S}_{p,\tilde{k}}(q))(1+o_{\P_\theta}(1)) \tag{Chebyshev} \\
    &\geq \e \bar{\Phi}(\mu) (1+o_{\P_\theta}(1)) = p^{-\beta + \frac{1}{2}-\beta_1} (1+o_{\P_\theta}(1)).
\end{align*}
Finally, since $\beta_1 < 3/4 \Rightarrow 2\beta_1 < 3/2$, and since $\beta<\beta_1$ implies $\beta+\beta_1 < 2\beta_1$, we must have that $\beta+\beta_1 < 3/2$ and thus $-\beta+1/2-\beta_1 > -1$, from which \eqref{type2crit2} now follows.

\subsubsection*{Case 2: $\frac{1}{4}<r<1$} 
In this case, $\rho = \sqrt{2r\log p} = (1-\sqrt{1-\beta_1})\sqrt{2\log p}$, and $\beta_1 \in (\frac{3}{4},1)$. Put $q \coloneqq \bar{\Phi}(\sqrt{2\log p}) \to 0$. Then,
\begin{align*}
\E_\theta \bar{S}_{p,\tilde{k}}(q) &= (1-\e) \P(Z > \sqrt{2\log p}) + \e \P(\mu + Z > \sqrt{2\log p}) \\
&= (1-\e) \bar{\Phi}(\sqrt{2\log p}) + \e \bar{\Phi}\big((1+o(1))\sqrt{1-\beta_1}\sqrt{2\log p}\big) \\
&= p^{-(1+o(1))} + p^{-\beta - (1-\beta_1)(1+o(1))}, \\
\frac{\var_\theta(\bar{S}_{p,\tilde{k}}(q))}{(\E_\theta \bar{S}_{p,\tilde{k}}(q))^2} &\leq \frac{1}{p \E_\theta \bar{S}_{p,\tilde{k}}(q)} = \frac{1}{p(p^{-(1+o(1))} + p^{-\beta-(1-\beta_1)(1+o(1))})} \to 0,
\end{align*}
since $\beta < \beta_1$. It then follows by Chebyshev's inequality that
\begin{align*}
\frac{\bar{S}_{p,\tilde{k}}(q)}{q} &= \frac{\E_\theta \bar{S}_{p,\tilde{k}}(q)}{q}(1+o_{\P_\theta}(1)) \\
&= \frac{p^{-(1+o(1))}+p^{-\beta-(1-\beta_1)(1+o(1))}}{p^{-(1+o(1))}}(1+o_{\P_\theta}(1)) \\
&\to \infty,
\end{align*}
since $\beta < \beta_1$. The same application of Chebyshev's inequality gives
\begin{align*}
\bar{S}_{p,\tilde{k}}(q) =\E_\theta \bar{S}_{p,\tilde{k}}(q) (1+o_{\P_\theta}(1)) \to 0 \mathrm{ in } \P_\theta.
\end{align*}
It follows that on a set with probability tending to 1, we have that $e^2\leq \frac{\bar{S}_{p,\tilde{k}}(q)}{q}$ and $\frac{1}{2} \E_\theta \bar{S}_{p,\tilde{k}}(q) \leq \bar{S}_{p,\tilde{k}}(q) \leq \frac{1}{2}$. On this set, by Part 3 of Lemma \ref{kbehave}, 
\begin{align*}
K(\bar{S}_{p,\tilde{k}}(q),q) &\geq \frac{1}{2} \bar{S}_{p,\tilde{k}}(q) \log \frac{\bar{S}_{p,\tilde{k}}(q)}{q} \\
&\geq \frac{1}{2} \E_\theta \bar{S}_{p,\tilde{k}}(q) \tag{$\log e^2=2$}\\
&\asymp p^{-\beta-(1-\beta_1)(1+o(1))} .
\end{align*}
(\ref{type2crit2}) now follows since $\beta < \beta_1$.

\subsubsection*{Case 3: $\beta = 1$}

In this case, we show that if $r>1+a$ and $\rho \coloneqq \sqrt{2r\log p}$, then $\mathcal{R}_{\mathrm{1-side}}(p,n,\rho,1) \to 0$, as $p \to \infty$. 
Consider the test,
\begin{align}
\label{eq:max-test}
\psi_{\max}(X) \coloneqq \textbf{1}\left\{\max_{k\leq |\T|,j\leq p} Y_{jk} > \sqrt{(2+\gamma)\log (p |\T|)} \right\},
\end{align}
where the $\{Y_{jk}\}$ are defined in (\ref{xtoy}), and $\gamma > 0$ is a small constant satisfying,
\begin{align}
\label{gammamain3}
\sqrt{(2+\gamma)(1+a)} < \sqrt{2r}.
\end{align}
Since each $Y_{jk} \sim N(0,1)$ under any null parameter $\theta \in \Theta_0(p,n)$, a union bound yields the Type I error control,
\begin{align*}
\P_\theta\left(\max_{k\leq |\T|,j\leq p} Y_{jk} > \sqrt{(2+\gamma)\log(p|\T|)} \right) &\leq \sum_{k=1}^{|\T|} \sum_{j=1}^p \bar{\Phi}\big(\sqrt{(2+\gamma)\log (p|\T|)} \big) \\
&= p^{1+a(1+o(1))} p^{-\frac{2+\gamma}{2}(1+a(1+o(1)))} \\
&\to 0,
\end{align*}
since $|\T| = p^{a(1+o(1))}$. Now suppose $\theta \in \Theta_1^{\mathrm{1-side}}(p,n,\rho,1)$, so that $\theta$ has at least one non-null row. The Type II error is
\begin{align*}
\P_\theta(\psi_{\max} = 0) = \P_\theta\left(\max_{k\leq |\T|,j\leq p}Y_{jk} \leq \sqrt{(2+\gamma)\log (p|\T|)} \right) \leq \P_\theta\big(Y_{jk} \leq \sqrt{(2+\gamma)\log (p|\T|)}\big),
\end{align*}
for any $j\leq p$ and $k\leq |\T|$. Since the max test only becomes more powerful if the means of $Y_{jk}$ are increased, we may assume that the lower bound $\rho$ on the signal size in the definition of $\Theta_1^{\mathrm{1-side}}(p,n,\rho,s=1)$ is achieved. By definition of $\theta \in \Theta_1^{\mathrm{1-side}}(p,n,\rho,s=1)$, there is some non-null row in $\theta$. Denote the index of this non-null row as $i \in [p]$. Without loss of generality, suppose that the true changepoint location $t^*$ in $\theta$ satisfies $t^* \leq n/2$; by symmetry of the grid, an analogous argument can be made for $t^* > n/2$. By the second part of Lemma \ref{closegrid}, there exists some $\tilde{t} \coloneqq \lfloor (1+\d)^{\tilde{k}} \rfloor \in \T$ for which
\begin{align*}
Y_{i\tilde{k}} \sim N(\mu,1) \text{ where } \mu = (1+o(1)) \rho,
\end{align*}
under $\P_\theta$. Then the Type II error is bounded,
\begin{align*}
\P_\theta(\psi_{\max} = 0) &\leq \P_\theta\big(Y_{i\tilde{k}} \leq \sqrt{(2+\gamma)\log(p|\T|)} \big) \\
&= \P\big(\mu + Z \leq \sqrt{(2+\gamma)(1+a)\log p} \big) \tag{$Z\sim N(0,1)$}\\
&= \Phi\left(\sqrt{(2+\gamma)(1+a)\log p} - (1+o(1))\sqrt{2r \log p} \right)\\
&\to 0,
\end{align*}
by the assumption that $\gamma$ satisfies (\ref{gammamain3}).

\subsection{Lower bound for Theorem \ref{thm:two-changepoints}}
\label{sec-two-changepoints-LB}

The cases in which $1-2\beta_1 \leq a$ are covered by Theorem 1 in \cite{chan2015optimal}, so we will only show the first case here. To this end, suppose $a \leq 1-2\beta_1$, let $\beta_1 < \beta$ and put $\rho^2= p^{\frac{a-(1-2\beta_1)}{2}} \to 0$. Consider testing between
\begin{align*}
    &H_0: (X_{j:})_{j\leq p} \sim \prod_{j=1}^p N(0,I_n) \\
    &H_1: (X_{j:})_{j\leq p} \sim \frac{1}{n-1} \sum_{k=1}^{n-1} \prod_{j=1}^p \left[(1-\e)N(0,I_n) + \e \left(\frac{1}{2}  N(\rho \theta^{(k)},I_n) + \frac{1}{2} N(-\rho\theta^{(k)},I_n) \right)\right],
\end{align*}
where $\theta^{(k)}_i=1\{i=k+1\}$ for $i=1,\dots,n$. We will show
\begin{align*}
    \limsup_{p\to \infty}\E_1 \left( \frac{f_1}{f_0}(X) \right) \leq 1.
\end{align*}
To this end, we have
\begin{align*}
    \frac{f_1}{f_0}(X) = \frac{1}{n-1} \sum_{k=1}^{n-1} \prod_{j=1}^p \left[1-\e+\e e^{-\rho^2/2}\left(\frac{e^{\rho\langle X_{j:},\theta^{(k)}\rangle}+e^{-\rho \langle X_{j:},\theta^{(k)}\rangle}}{2}\right) \right].
\end{align*}
Distributing the expectation, we have
\begin{align*}
    \E_1 \frac{f_1}{f_0}(X) &= \frac{1}{n-1} \sum_{k=1}^{n-1} \E_1 \prod_{j=1}^p \left[1-\e+\e e^{-\rho^2/2}\left(\frac{e^{\rho\langle X_{j:},\theta^{(k)}\rangle}+e^{-\rho \langle X_{j:},\theta^{(k)}\rangle}}{2}\right) \right] .
\end{align*}
Conditional on the location $k^*=\ell$ of the changepoint, the noise variables across $j=1,\dots,p$ are independent, so the above is equal to
\begin{align*}
    = \frac{1}{(n-1)^2} \sum_{k,\ell\leq n-1}  \prod_{j=1}^p \left[1-\e+\e e^{-\rho^2/2}\E_1\left(\frac{e^{\rho\langle X_{j:},\theta^{(k)}\rangle}+e^{-\rho \langle X_{j:},\theta^{(k)}\rangle}}{2} \mid k^*=\ell\right) \right] .
\end{align*}
Next, note that
\begin{align*}
    \E_1 \left( e^{\rho \langle X_{j:},\theta^{(k)} \rangle} \mid k^*=\ell \right) &= (1-\e) e^{\rho^2/2} + \e\left( \frac{1}{2} \E e^{\rho\langle \rho \theta^{(\ell)}+Z_{j:},\theta^{(k)}\rangle} + \frac{1}{2}\E e^{\rho \langle -\rho \theta^{(\ell)}+Z_{j:},\theta^{(k)}\rangle}\right) \\
    &= \begin{cases}
        e^{\rho^2/2} \hspace{1em} &\text{if }k\neq \ell \\
        (1-\e)e^{\rho^2/2} + \e e^{\rho^2/2}\left( \frac{1}{2} e^{\rho^2} + \frac{1}{2} e^{-\rho^2} \right) &\text{if }k=\ell.
    \end{cases}
\end{align*}
Arguing by symmetry,
\begin{align*}
    \E_1 \left(e^{-\rho\langle X_{j:},\theta^{(k)}\rangle }\mid k^*=\ell\right)=\E_1 \left( e^{\rho \langle X_{j:},\theta^{(k)} \rangle} \mid k^*=\ell \right) &\leq \begin{cases}
        e^{\rho^2/2} \hspace{1em} &\text{if } k\neq \ell \\
        (1-\e) e^{\rho^2/2} + \e e^{\rho^2/2+\rho^4/2} &\text{if } k= \ell.
    \end{cases}
\end{align*}
Plugging this into our expression for $\E_1 \frac{f_1}{f_0}(X)$, we have
\begin{align*}
    \E_1 \frac{f_1}{f_0}(X) &\leq \frac{1}{(n-1)^2} \left( \sum_{k\neq \ell}  \prod_{j=1}^p \left[ 1 -\e + \e  \right] + \sum_{k= \ell} \prod_{j=1}^p \left[1-\e + \e(1-\e+\e e^{\rho^4/2}) \right]\right) \\
    &= 1 + o(1) + \frac{1}{(n-1)^2} \cdot (n-1) \left[1 + \e^2 (e^{\rho^4/2}-1) \right]^p \\
    &\sim 1 + \frac{1}{n}\cdot \exp(p^{1-2\beta}(e^{\rho^4/2}-1)) 
\end{align*}
since $\beta > \beta_1$ implies
\begin{align*}
    p^{1-2\beta}(e^{\rho^4/2}-1) \sim p^{1-2\beta} \rho^4/2 \sim p^{1-2\beta+(a-(1-2\beta_1))}/2 \asymp p^{a+2(\beta_1-\beta)} \ll p^{a(1+o(1))}=\log n.
\end{align*}
Thus we have
\begin{align*}
    \E_1 \frac{f_1}{f_0}(X) \leq 1 + \exp\left(p^{a+2(\beta_1-\beta)+o(1)}-\log n \right) \to 1.
\end{align*}

\subsection{Upper bound for Theorem \ref{thm:two-changepoints}}
\label{sec-two-changepoints-UB}

\noindent\textbf{Test statistic 1.} For the case $a \leq 1-2\beta_1$, we may use the following test:
\begin{align*}
    \text{Reject $H_0$ if }\max_{t=(t_1,t_2)\in [n-1]^2} \; \sum_{j=1}^p Y_{j,t} > 2p+2\sqrt{2xp}+2x,
\end{align*}
where $x=2.1\log n$. Lemma \ref{lem:birge} implies type 1 error control, since
\begin{align*}
    \sum_{j=1}^p Y_{j,t} \sim \chi^2_{2p} \Rightarrow \P_0(\text{Reject }H_0) &\leq \sum_{t_1,t_2 \in [n-1]} \P_0(\chi^2_{2p} > 2p+2\sqrt{2xp}+2x) \\
    &\leq n^{2} e^{-x} \to 0.
\end{align*}
The type 2 error is
\begin{align*}
    \P_{\theta}\bigg(\max_{t=(t_1,t_2)\in [n-1]^2} \; \sum_{j=1}^p Y_{j,t} \leq 2p+2\sqrt{2xp}+2x \bigg) &\leq \P_\theta\bigg(\sum_{j=1}^p Y_{j,k} \leq 2p+2\sqrt{2xp}+2x \bigg) ,
\end{align*}
where $k \in [n-1]^2$ is the vector of true changepoint locations. Since $\sum_{j=1}^p Y_{j,k} \sim \chi^2_{2p,s\rho^2}$, we have
\begin{align*}
    \P_\theta\bigg(\sum_{j=1}^p Y_{j,k} \leq 2p+2\sqrt{2xp}+2x\bigg) \leq \P_\theta\bigg(\sum_{j=1}^p Y_{j,k} \leq s\rho^2+2p-2\sqrt{x(2p+2s\rho^2)}\bigg)
\end{align*}
as long as $s\rho^2 \geq 2x+2\sqrt{2xp}+2\sqrt{2x(p+s\rho^2)}$. Since $x\asymp \log n \sim p^{a(1+o(1))}$, this condition is equivalent to
\begin{align*}
    p^{1-\beta+ \frac{1}{2}(a-(1-2\beta_1))} \gtrsim p^{a(1+o(1))} + p^{\frac{1}{2}(a+1)} \asymp p^{\frac{1}{2}(a+1)},
\end{align*}
where the last equivalence follows because $a\leq 1-2\beta_1 < 1$. Inspecting the exponents, we see that the above condition is satisfied since $\beta<\beta_1$. The type 2 error probability goes to zero by the second part of Lemma \ref{lem:birge}.

\vspace{1em}

\noindent \textbf{Test statistic 2.} For the cases in which $a > 1-2\beta_1$, we use the following test. For any $t:=(t_1,t_2) \in [n-1]^2$ with $t_1 < t_2$, define the statistic
\begin{align}
\label{eq:count-test-statistic}
    Y_{j,t} \coloneqq \left\| \Sigma_{t_1,t_2}^{-1/2} \left( \bar{X}_{j,[t_1+1:t_2]}-\bar{X}_{j,[1:t_1]}, \; \bar{X}_{j,[t_2+1:n]} - \bar{X}_{j,[t_1+1:t_2]} \right)^\top \right\|^2,
\end{align}
where $\bar{X}_{j,[i:k]}$ represents the sample mean of the $i^{\th}$ through $k^{\th}$ entries of $X_{j:}$, so that, under $H_0$, we have $Y_{j,t} \sim \chi^2_2$ independently for $j=1,\dots,p$. Define the count proportion
\begin{align*}
    \bar{S}_{p,t} (q) \coloneqq \frac{1}{p}\sum_{j=1}^p 1\{Y_{j,t} > \bar{F}_2^{-1}(q)\},
\end{align*}
where $\bar{F}_{2}$ is the tail probability function for the $\chi^2_2$ distribution. Let $\gamma>0$. The penalized Berk-Jones test is defined
\begin{align*}
    \text{Reject $H_0$ if }\max_{t_1,t_2\in [n-1]:t_1<t_2} \; \max_{q \in (0,1)} \; pK(\bar{S}_{p,t}(q),q) - (4+\gamma)\log n > 2(2+\gamma) \log p
\end{align*}
where $K(x,y)\coloneqq x \log \frac{x}{y} + (1-x) \log \frac{1-x}{1-y}$.

\vspace{1em}

\noindent For the type 1 error of this test, a union bound yields
\begin{align*}
    \P_0(\text{Reject $H_0$}) &\leq \sum_{t_1=1}^{n-1} \sum_{t_2=t_1+1}^{n-1} \sum_{j=1}^p \P_0(pK(j/p,P_{t,(j)}) > 2(2+\gamma)\log p + (4+\gamma) \log n)
\end{align*}
where $P_{t,(j)}\coloneqq \bar{F}_2(Y_{j,t})$. By Lemma \ref{chernoff}, the above is bounded by
\begin{align*}
    &\leq \sum_{t_1=1}^{n-1}\sum_{t_2=t_1+1}^{n-1} \left[5 e^{-2(2+\gamma)\log p - (2+\gamma) \log n} + \sum_{j=2}^p e\sqrt{2} j e^{-\frac{1}{2}(2(2+\gamma)\log p + (4+\gamma) \log n)}\right] \\
    &\leq 5 n^2 e^{-2(2+\gamma)\log p - (4+\gamma) \log n} + n^2 p^2 e\sqrt{2} e^{-\frac{1}{2}(2(2+\gamma)\log p + (4+\gamma)\log n)} \\
    &\lesssim p^{-2(2+\gamma)} n^{-2} + p^{2-(2+\gamma)}  \to 0.
\end{align*}
For the type 2 error, as in the proof of Theorem \ref{UBmain2}, we assume that the signal requirement is tight, i.e. for each non-null $j$ the lower bound on the signal is achieved,
\begin{align*}
    \|\Sigma_{t_1,t_2}^{-1/2} (\Delta^{(1)}_j, \Delta^{(2)}_j)^\top \|^2 = \rho^2.
\end{align*}
We must now show for any $\theta$ satisfying the conditions of $H_1$,
\begin{align*}
    \P_\theta \left( \max_{t_1,t_2\in [n-1]:t_1<t_2} \; \max_{q \in (0,1)} K(\bar{S}_{p,t}(q),q) \leq \frac{2(2+\gamma)\log p + (2+\gamma)\log n}{p} \right) \to 0.
\end{align*}
Now since $(2+\gamma) \log n \asymp p^{a(1+o(1))}$, it is sufficient to pick some $t \in [n-1]^2$ and $q \in (0,1)$ such that
\begin{align}
\label{eq:type2-condition}
    \frac{K(\bar{S}_{p,k}(q),q)}{p^{a-1}} = \omega_{\P_\theta}(p^\zeta)
\end{align}
for some $\zeta>0$. Now let $\beta_1 \in (0,1)$ satisfying $\beta < \beta_1$ and put $\rho^2 \equiv \rho^*_{2-\text{side}}(a,\beta_1)^2$,
and let $t^* \in [n-1]^2$ denote the true changepoint locations, for which $Y_{j,t^*} \sim \chi^2_{2,\rho^2}$.

\subsubsection*{Case 2: $\beta = 1-2\beta_1< a<1-4\beta_1/3$}

In this case, $\rho^2 = (a-(1-2\beta_1))\log p$. Choose $q = \bar{F}_2(4\rho^2)$.
\begin{align*}
    \E_\theta \bar{S}_{p,k}(q) = (1-\e) q + \e \P(\chi^2_{2,\rho^2} > 4\rho^2)
\end{align*}
As expression \eqref{lem:chi-sq-tightness} indicates, the first part of Lemma \ref{lem:birge} is tight in the following sense:
\begin{align*}
    \P(\chi^2_{2,\rho^2} \geq \rho^2 + 2\sqrt{2x\rho^2} + 2x) \geq e^{-x(1+o(1))},
\end{align*}
as $x,\rho^2 \to \infty$. This implies
\begin{align}
\label{eq:case2-chebyshev}
    \E_\theta \bar{S}_{p,k}(q) \geq \e \P(\chi^2_{2,\rho^2} \geq 4\rho^2) \geq p^{-\beta}e^{-\frac{1}{2}\rho^2 (1+o(1))} = p^{-\beta - \frac{1}{2}(a-(1-2\beta_1))(1+o(1))},
\end{align}
since $4\rho^2 \sim \rho^2 + 2\sqrt{2\rho^2 x}+2x \Rightarrow \sqrt{x}\sim \rho /\sqrt{2}$. It follows that
\begin{align*}
    \frac{\var(\bar{S}_{p,k}(q))}{(\E_\theta \bar{S}_{p,k}(q))^2} \leq \frac{1}{p\E_\theta \bar{S}_{p,k}(q)} \to 0
\end{align*}
since $1-\beta-\frac{1}{2}(a-(1-2\beta_1)) > 0$ in this case.
Expression \eqref{lem:chi-sq-tightness} of Lemma \ref{lem:birge} also implies $q=\bar{F}_2(4\rho^2) \geq e^{-2\rho^2(1+o(1))}$, which yields
\begin{align*}
    \E_\theta \bar{S}_{p,k}(q) \geq \max \left\{ p^{-\beta - \frac{1}{2}(a-(1-2\beta_1))(1+o(1))}, p^{-2(a-(1-2\beta_1))(1+o(1))} \right\}.
\end{align*}
\textbf{First suppose:} $-\beta-\frac{1}{2}(a-(1-2\beta_1)) < -2(a-(1-2\beta_1))$. Then
\begin{align*}
    \E_\theta \bar{S}_{p,k}(q) = q + \e( p^{-\frac{1}{2}(a-(1-2\beta_1))(1+o(1))}-q) \sim q,
\end{align*}
so eventually $\frac{\E_{\theta}\bar{S}_{p,k}(q)}{q} \leq 4$ is satisfied, which implies by Part 2 of Lemma \ref{kbehave} that
\begin{align*}
    K(\bar{S}_{p,k}(q),q) \geq \frac{(\bar{S}_{p,k}(q)-q)^2}{9q} = \frac{(\E_\theta \bar{S}_{p,k}(q)-q)^2(1+o_{\P_\theta}(1))}{9q},
\end{align*}
where the equality follows from $-1-2(a-(1-2\beta_1)) < -2\beta-(a-(1-2\beta_1))$ and
\begin{align*}
    \var(\bar{S}_{p,k}(q)) &\leq p^{-1} \E_{\theta}\bar{S}_{p,k}(q) \sim p^{-1-2(a-(1-2\beta_1))(1+o(1))} \\
    \E_\theta \bar{S}_{p,k}(q) - q &\geq p^{-\beta-\frac{1}{2}(a-(1-2\beta_1))(1+o(1))}.
\end{align*}
Then it follows that
\begin{align*}
    \frac{(\E_\theta \bar{S}_{p,k}(q)-q)^2}{q} &\gtrsim p^{-2\beta-(a-(1-2\beta_1))(1+o(1))} / p^{-2(a-(1-2\beta_1))(1+o(1))} \\
    &= p^{(a-(1-2\beta_1))(1+o(1)) - 2\beta} \gg p^{a-1}
\end{align*}
since $\beta < \beta_1$, which yields \eqref{eq:type2-condition} with, e.g. $\zeta=\beta_1-\beta$. \\

\noindent \textbf{Now suppose:} $-\beta-\frac{1}{2}(a-(1-2\beta_1)) > -2(a-(1-2\beta_1))$. Then \eqref{eq:case2-chebyshev} implies
\begin{align*}
    \bar{S}_{p,k}(q) = (\E_\theta \bar{S}_{p,k}(q)) (1+o_{\P_\theta}(1)).
\end{align*}
We also have
\begin{align*}
    \frac{\E_\theta \bar{S}_{p,k}(q)}{q} \geq \frac{p^{-\beta-\frac{1}{2}(a-(1-2\beta_1))(1+o(1))}}{p^{-2(a-(1-2\beta_1))(1+o(1))}} \to \infty.
\end{align*}
Part 3 of Lemma \ref{kbehave} implies
\begin{align*}
    K(\bar{S}_{p,k}(q),q) \geq (\E_\theta \bar{S}_{p,k}(q))(1+o_{\P_\theta}(1)) \sim p^{-\beta-\frac{1}{2}(a-(1-2\beta_1))} \gg p^{a-1}
\end{align*}
since $-\beta-\frac{1}{2}(a-(1-2\beta_1)) > a-1$ happens iff
\begin{align*}
     -\beta > \frac{3}{2}(a-1) + \beta_1 \Leftarrow -\beta_1 \geq \frac{3}{2}(a-1)+\beta_1 \iff 1-\frac{4}{3} \beta_1 \geq a,
\end{align*}
which was assumed at the start of Case 2.

\subsubsection*{Case 3: $1-4\beta_1/3 < a \leq 1-\beta_1$}

In this case, $\rho^2 = 2(x-y)^2\log p$ where $x=\sqrt{1-a}$ and $y=\sqrt{1-a-\beta_1}$. Choose $q = \bar{F}_{2}(2x^2 \log p)$. Lemma \ref{lem:birge} implies
\begin{align*}
    \E_\theta \bar{S}_{p,k}(q) &= (1-\e) q + \e \P(\chi^2_{2,\rho^2}> 2x^2\log p) \\
    &\geq p^{-x^2(1+o(1))} + p^{-\beta - y^2(1+o(1))},
\end{align*}
since $q \geq e^{-\frac{1}{2} \cdot 2x^2(1+o(1)) \log p} = p^{-x^2(1+o(1))}$ and
\begin{align*}
    \P(\chi^2_{2,\rho^2}>2x^2 \log p) \geq \exp(-y^2(1+o(1))\log p),
\end{align*}
since $2x^2 \log p \sim \rho^2 + 2\sqrt{2z\rho^2}+2z$ implies $\sqrt{z}\sim y\sqrt{\log p}$. It follows now that
\begin{align*}
    \frac{\var(\bar{S}_{p,k}(q))}{(\E_\theta \bar{S}_{p,k}(q))^2} &\leq \frac{1}{p\E_{\theta}\bar{S}_{p,k}(q)} \\
    &\leq \frac{1}{p^{1-x^2(1+o(1))}+ p^{1-\beta-y^2(1+o(1))}} \to 0,
\end{align*}
since $y^2=1-a-\beta_1 < 1-\beta$. Chebyshev's inequality implies
\begin{align*}
    \frac{\bar{S}_{p,k}(q)}{q} =\frac{(\E_\theta \bar{S}_{p,k}(q))(1+o_{\P_\theta}(1))}{q} \geq \frac{p^{-x^2(1+o(1))}+p^{-\beta-y^2(1+o(1))}}{p^{-x^2(1+o(1))}} \to \infty
\end{align*}
since $\beta<\beta_1$ implies $-\beta-y^2=-\beta-1+a+\beta_1 > a-1= -x^2$. Part 3 of Lemma \ref{kbehave} implies
\begin{align*}
    K(\bar{S}_{p,k}(q),q) \geq (\E_{\theta}\bar{S}_{p,k}(q))(1+o_{\P_\theta}(1)) \geq p^{-\beta-y^2 + o(1)} \gg p^{a-1}
\end{align*}
since $-\beta-y^2 = -\beta-(1-a-\beta_1)$ and $\beta<\beta_1$.

\subsubsection*{Case 4: $a > 1-\beta_1$}

In this case, $\rho^2 = p^{a-(1-\beta_1)} \to \infty$. Choose $q = \bar{F}_2(\rho^2/2)\leq e^{-\frac{1}{4}p^{a-(1-\beta_1)}} \to 0$. Then
\begin{align*}
    \E_\theta \bar{S}_{p,k}(q) &= (1-\e) q + \e \P(\chi^2_{2,\rho^2} > \rho^2/2) \\
    &\geq \e (1-\P(\chi^2_{2,\rho^2(1+o(1))} \leq \rho^2/2) ) \\
    &\geq \e (1-e^{-\frac{\rho^2}{32}(1+o(1))}) \sim \e,
\end{align*}
by the second part of Lemma \ref{lem:birge}, since $\rho^2/2 \sim \rho^2 - 2\sqrt{2x\rho^2}$ implies $\sqrt{x} \sim \frac{\rho}{4\sqrt{2}}$ and
\begin{align*}
    \P(\chi^2_{2,\rho^2} \leq \rho^2/2) \leq e^{-\left(\frac{\rho^2}{4\sqrt{2}}\right)^2}.
\end{align*}
Thus, we have the condition
\begin{align*}
    \frac{\var(\bar{S}_{p,k}(q))}{(\E_\theta \bar{S}_{p,k}(q))^2} \leq \frac{1}{p\E_{\theta}\bar{S}_{p,k}(q)} \leq \frac{1}{p\e} \to 0
\end{align*}
since $\beta < 1$. Now since
\begin{align*}
    \frac{\bar{S}_{p,k}(q)}{q} = \frac{(\E_\theta \bar{S}_{p,k}(q))(1+o_{\P_\theta}(1))}{q} \geq p^{\beta-1} \cdot \exp\left( \frac{p^{a-(1-\beta_1)}}{4} \right) \to \infty.
\end{align*}
Part 3 of Lemma \ref{kbehave} implies
\begin{align*}
    K(\bar{S}_{p,k}(q),q) &\geq \frac{1}{2} (\E_\theta \bar{S}_{p,k}(q))(1+o_{\P_\theta}(1)) \log \frac{\E_\theta \bar{S}_{p,k}(q)}{q} \\
    &\geq \frac{1}{2}\e \left((\beta-1)\log p + \frac{p^{a-(1-\beta_1)}}{4}\right) \\
    &\sim \frac{p^{a-1+\beta_1-\beta}}{8} \gg p^{a-1}
\end{align*}
since $\beta_1>\beta$.


\section{Technical Lemmas}
\label{technical}
This supplement file contains the statements and proofs of several technical lemmas that were referenced in the proofs of the main results.

\begin{lemma}
\label{thetas}
Let $\T$ be the grid of geometrically growing changepoint locations with base $b$,
\begin{align*}
\T \coloneqq \{\lfloor b^1\rfloor,\lfloor b^2 \rfloor, \dots, \lfloor b^{\log_{\log n}n}\rfloor \}.
\end{align*}
We abuse notation and treat $b^k$ as $\lfloor b^k \rfloor$ in the statements and calculations that follow. For each $k =1,\dots,|\T|$, recall the definition of the vector $\theta^{(k)} \in \R^n$,
\begin{align*}
\theta^{(k)}_t \coloneqq \begin{cases}
\left(\frac{n-b^k}{nb^k} \right)^{1/2} \hspace{1em} &\text{if } t \leq b^k \\
-\left(\frac{b^k}{n(n-b^k)} \right)^{1/2} &\text{if } t > b^k.
\end{cases}
\end{align*}
Then the following are true for each $k,l \leq |\T|$, and $Z \in \R^n$.
\begin{enumerate}
\item $\langle Z,\theta^{(k)}\rangle = \sqrt{\frac{b^k(n-b^k)}{n}} \left(\overline{Z}_{1:b^k}-\overline{Z}_{b^k+1:n} \right)$.
\item $\|\theta^{(k)}\|= 1$.
\item $\langle \theta^{(k)},\theta^{(l)} \rangle \leq b^{-\frac{|k-l|}{2}}$.
\end{enumerate}
\end{lemma}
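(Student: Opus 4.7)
The plan is to verify each of the three parts by direct computation from the definition of $\theta^{(k)}$; there is no real conceptual obstacle, just bookkeeping.

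For Part 1, I would simply split the inner product $\langle Z,\theta^{(k)}\rangle$ at the index $b^k$ and substitute the two values of the coordinates, obtaining
\[
\sum_{t=1}^{b^k} Z_t \sqrt{\tfrac{n-b^k}{nb^k}} - \sum_{t=b^k+1}^{n} Z_t \sqrt{\tfrac{b^k}{n(n-b^k)}} = \sqrt{\tfrac{b^k(n-b^k)}{n}} \bigl(\bar Z_{1:b^k}-\bar Z_{b^k+1:n}\bigr),
\]
after pulling $b^k$ inside the first square root and $n-b^k$ inside the second. For Part 2, the squared norm reads $b^k\cdot\frac{n-b^k}{nb^k}+(n-b^k)\cdot\frac{b^k}{n(n-b^k)}=\frac{n-b^k}{n}+\frac{b^k}{n}=1$.

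Part 3 is the only piece that requires a little work. I will assume without loss of generality that $k\le l$, so $b^k\le b^l$, and it is convenient to use the unnormalised vector $v^{(k)} := \frac{1}{b^k}\mathbf{1}_{[1,b^k]}-\frac{1}{n-b^k}\mathbf{1}_{(b^k,n]}$, which satisfies $\|v^{(k)}\|^2 = n/(b^k(n-b^k))$, hence $\theta^{(k)}=\sqrt{b^k(n-b^k)/n}\,v^{(k)}$. Splitting the inner product of $v^{(k)}$ and $v^{(l)}$ at the indices $b^k$ and $b^l$ gives three regions with sign patterns $(+,+)$, $(-,+)$, $(-,-)$, and summing yields
\[
\langle v^{(k)},v^{(l)}\rangle=\frac{1}{b^l}-\frac{b^l-b^k}{(n-b^k)b^l}+\frac{1}{n-b^k}=\frac{n}{b^l(n-b^k)},
\]
after routine algebraic simplification. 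Multiplying by the two normalising factors gives
\[
\langle\theta^{(k)},\theta^{(l)}\rangle=\sqrt{\tfrac{b^k(n-b^k)}{n}}\sqrt{\tfrac{b^l(n-b^l)}{n}}\cdot\frac{n}{b^l(n-b^k)}=\sqrt{\tfrac{b^k}{b^l}}\cdot\sqrt{\tfrac{n-b^l}{n-b^k}}.
\]
Since $b^k\le b^l$ implies $n-b^l\le n-b^k$, the second factor is at most $1$, and the first equals $b^{-(l-k)/2}=b^{-|k-l|/2}$, giving the claimed bound. The only place one must be mildly careful is the middle region $(b^k,b^l]$, where the sign of $v^{(k)}$ has flipped while that of $v^{(l)}$ has not; otherwise the three parts are straightforward computations and no new ideas are needed.
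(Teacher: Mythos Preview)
Your proposal is correct and follows essentially the same approach as the paper: direct computation split into three index regions, leading to the closed form $\langle\theta^{(k)},\theta^{(l)}\rangle = b^{-|k-l|/2}\sqrt{(n-b^{k\vee l})/(n-b^{k\wedge l})}$, from which both Parts~2 and~3 follow. The only cosmetic difference is that you factor out the unnormalised vectors $v^{(k)}$ before computing, whereas the paper carries the normalisation through from the start; the algebra is otherwise identical.
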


\begin{proof}
For the first identity, apply the definition of $\theta^{(k)}$ to obtain,
\begin{align*}
\langle Z,\theta^{(k)} \rangle &= \left(\left(\frac{n-b^k}{nb^k} \right)^{1/2} \sum_{i\leq b^k} Z_i - \left(\frac{b^k}{n(n-b^k)} \right)^{1/2} \sum_{i> b^k} X_i \right) \\
&= \left(\frac{b^k(n-b^k)}{n} \right)^{1/2}\cdot \left(b^{-k} \sum_{i\leq b^{k}} Z_i - (n-b^k)^{-1} \sum_{i > b^k} Z_i \right) \\
&= \sqrt{\frac{b^k(n-b^k)}{n}} \left(\overline{Z}_{1:b^k}-\overline{Z}_{b^k+1:n} \right).
\end{align*}
For the second and third results, we compute the inner product for arbitrary $k,l \leq |\T|$. Suppose $k\leq l$. Then the inner product $\langle \theta^{(k)},\theta^{(l)}\rangle$ is equal to,
\begin{align*}
&= b^k \left(\frac{n-b^k}{nb^k}\cdot \frac{n-b^l}{nb^l}\right)^{1/2}-(b^l-b^k)\left(\frac{b^k}{n(n-b^k)}\cdot \frac{n-b^l}{nb^l}\right)^{1/2}\\
&+(n-b^l)\left( \frac{b^k}{n(n-b^k)}\cdot \frac{b^l}{n(n-b^l)}\right)^{1/2} \\
&= \frac{b^{\frac{k+l}{2}}}{n}((n-b^k)(n-b^l))^{1/2}\left[b^k \cdot \frac{1}{b^{k+l}} - (b^l-b^k)b^{\frac{k-l}{2}-\frac{k+l}{2}}\frac{1}{n-b^k}+\frac{n-b^l}{(n-b^k)(n-b^l)} \right] \\
&= \frac{b^{\frac{k+l}{2}}}{n}((n-b^k)(n-b^l))^{1/2}\left[\frac{1}{b^{l}} - (b^l-b^k)b^{-l} \frac{1}{n-b^k}+\frac{1}{n-b^k} \right] \\
&= \frac{b^{\frac{k+l}{2}}}{n}((n-b^k)(n-b^l))^{1/2}\left[\frac{n-b^k}{b^{l}(n-b^k)} -  \frac{b^l-b^k}{b^{l}(n-b^k)}+\frac{b^l}{b^l(n-b^k)} \right] \\
&= \frac{b^{\frac{k+l}{2}}}{n}((n-b^k)(n-b^l))^{1/2}\left[\frac{n}{b^l(n-b^k)} \right] \\
&= b^{\frac{k-l}{2}} \cdot \left(\frac{n-b^l}{n-b^k}\right)^{1/2}.
\end{align*}
An identical calculation gives a symmetric expression when $l \leq k$. Putting them together gives,
\begin{align*}
\langle \theta^{(k)},\theta^{(l)} \rangle = b^{-\frac{|k-l|}{2}}\left(\frac{n-b^{k\vee l}}{n-b^{k\wedge l}} \right)^{1/2} \leq b^{-\frac{|k-l|}{2}},
\end{align*}
which yields the third claim. Taking $k=l$ in the first equality gives $\|\theta^{(k)}\|=1$.

\end{proof}

\begin{lemma}
\label{lem-LRT-error-lower-bd}
Consider the simple vs simple testing problem,
\begin{align*}
H_0 : X \sim f_0 \hspace{1em}\mathrm{ versus } \hspace{1em}H_1: X \sim f_1.
\end{align*}
The likelihood ratio test is defined $\psi(x) = 1_{\left\{\frac{f_1}{f_0}(x) > 1\right\}}$. The sum of type 1 and 2 errors is bounded below,
\begin{align*}
\P_0(\psi=1) + \P_1(\psi=0) \geq \frac{1}{k} \P_1(A_k),
\end{align*}
for any $k\geq 1$, where $A_k \coloneqq \left\{ \frac{f_1}{f_0}(X) \leq k \right\}$, and $\P_0,\P_1$ are the probability measures associated with $f_0,f_1$.
\end{lemma}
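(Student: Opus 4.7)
The plan is to reduce the sum of Type I and II errors of the likelihood ratio test to an explicit integral, then restrict that integral to the event $A_k$ and use the defining inequality $f_1 \leq k f_0$ there together with $k \geq 1$ to produce the stated bound.

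First I would compute the exact form of the testing error for $\psi$. Writing out the definitions,
\begin{align*}
\P_0(\psi = 1) + \P_1(\psi = 0) &= \int_{\{f_1 > f_0\}} f_0 \, d\mu + \int_{\{f_1 \leq f_0\}} f_1 \, d\mu \\
&= \int \min(f_0, f_1) \, d\mu,
\end{align*}
since on $\{f_1 > f_0\}$ the integrand is $f_0 = \min(f_0,f_1)$ and on the complementary event it is $f_1 = \min(f_0,f_1)$. This is the standard identity $\P_0(\psi = 1) + \P_1(\psi = 0) = 1 - \TV(\P_0, \P_1)$.

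Next I would lower bound the integral by restricting to $A_k = \{f_1/f_0 \leq k\}$, i.e.\ the set $\{f_1 \leq k f_0\}$. On $A_k$, we have $f_0 \geq f_1/k$, and since $k \geq 1$ this gives $\min(f_0, f_1) \geq \min(f_1/k, f_1) = f_1/k$. Therefore
\begin{align*}
\int \min(f_0, f_1) \, d\mu &\geq \int_{A_k} \min(f_0, f_1) \, d\mu \geq \frac{1}{k} \int_{A_k} f_1 \, d\mu = \frac{1}{k}\P_1(A_k),
\end{align*}
which combined with the identity above yields the claim. There is no real obstacle here; the proof is a one-line application of the min representation of the LRT error together with the pointwise bound $\min(f_0,f_1) \geq f_1/k$ on $A_k$.
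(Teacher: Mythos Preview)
Your proof is correct and is essentially the same argument as the paper's: both restrict attention to $A_k$ and use the pointwise inequality $f_0 \geq f_1/k$ there. The only cosmetic difference is that you first combine the two error terms into the single integral $\int \min(f_0,f_1)\,d\mu$ and then bound it on $A_k$, whereas the paper bounds $\P_0(\psi=1)$ and $\P_1(\psi=0)$ separately on $A_k$ (via $\P_0(\{\psi=1\}\cap A_k)\geq \frac{1}{k}\P_1(\{\psi=1\}\cap A_k)$ and $\P_1(\psi=0)\geq \frac{1}{k}\P_1(\{\psi=0\}\cap A_k)$) and sums.
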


\begin{proof}
The type 1 error can be lower bounded,
\begin{align*}
\P_0(\psi=1) \geq \P_0(\{\psi=1\}\cap A_k) \geq \E_0 \left( \frac{1}{k} \cdot \frac{f_1}{f_0}(X) 1_{\{\psi=1\}\cap A_k} \right) = \frac{\P_1(\{\psi=1\}\cap A_k)}{k}.
\end{align*}
The type 2 error is trivially lower bounded,
\begin{align*}
\P_1(\psi=0) \geq \frac{\P_1(\{\psi=0\}\cap A_k) }{k} .
\end{align*}
Summing these two inequalities gives the result.
\end{proof}

\begin{lemma}
\label{lem-Lk}
In the setting of Theorem \ref{main1}, for $k\neq k^*$, we have
\begin{align*}
\bar{\E}_1 (L_k \mid k^*) = 1+O(p \e^2 \rho^2e^{-\frac{1}{2}p^{a(1+o(1))}}). 
\end{align*}
\end{lemma}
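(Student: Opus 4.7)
The plan is a direct calculation exploiting the conditional independence of the rows $X_{j:}$ under $\bar{H}_1$ given $k^*$, together with the near-orthogonality estimate from Part 3 of Lemma \ref{thetas}. First I would write $X_{j:} = \rho \theta^{(k^*)} I_j + Z_{j:}$ with $I_j \stackrel{\iid}{\sim}\mathrm{Bern}(\bar{\e})$ independent of $Z_{j:} \stackrel{\iid}{\sim} N(0,I_n)$, so that conditionally on $k^*$ the rows are independent and each factor of $L_k$ has the same distribution. Hence
\[
\bar{\E}_1(L_k \mid k^*) = \bigl(\bar{\E}_1[\,1-\bar{\e}+\bar{\e}\,e^{\rho\langle X_{1:},\theta^{(k)}\rangle - \rho^2/2}\mid k^*\,]\bigr)^p.
\]

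Next I would evaluate the inner expectation. Since $\|\theta^{(k)}\|=1$ by Part 2 of Lemma \ref{thetas}, $U \coloneqq \langle Z_{1:},\theta^{(k)}\rangle \sim N(0,1)$, and $\E e^{\rho U - \rho^2/2} = 1$. Writing $\alpha \coloneqq \langle \theta^{(k^*)},\theta^{(k)}\rangle$ and using independence of $I_1$ from $U$,
\[
\bar{\E}_1[e^{\rho\langle X_{1:},\theta^{(k)}\rangle - \rho^2/2}\mid k^*] = \E e^{\rho^2 \alpha I_1}\cdot \E e^{\rho U-\rho^2/2} = 1 + \bar{\e}(e^{\rho^2\alpha} - 1),
\]
so each factor equals $1 + \bar{\e}^2(e^{\rho^2\alpha}-1)$, yielding
\[
\bar{\E}_1(L_k \mid k^*) = \bigl[\,1 + \bar{\e}^2(e^{\rho^2\alpha}-1)\,\bigr]^p.
\]

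Now I would control $\alpha$. Part 3 of Lemma \ref{thetas} with base $b = \log n$ gives $|\alpha| \leq b^{-|k-k^*|/2}$, and under calibration \eqref{3log} we have $b = e^{p^{a(1+o(1))}}$, so for $k \neq k^*$,
\[
|\alpha| \leq e^{-\frac{|k-k^*|}{2}p^{a(1+o(1))}} \leq e^{-\frac{1}{2}p^{a(1+o(1))}}.
\]
Thus $\rho^2|\alpha| \to 0$ in the regime considered (since $\rho^2$ is at most polylogarithmic in $p$ while $e^{-\frac{1}{2}p^{a(1+o(1))}}$ decays super-polynomially). Using $|e^x-1|\leq 2|x|$ for $|x|<1$ then gives
\[
\bar{\e}^2(e^{\rho^2\alpha}-1) = O\bigl(\bar{\e}^2 \rho^2 e^{-\frac{1}{2}p^{a(1+o(1))}}\bigr).
\]

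Finally I would lift the per-factor bound to the product via the elementary inequality $(1+y)^p \leq e^{py} \leq 1 + 2py$ for $py$ small, which is the case since $p\bar{\e}^2\rho^2 e^{-\frac{1}{2}p^{a(1+o(1))}} \to 0$ for any $a>0$. This yields
\[
\bar{\E}_1(L_k \mid k^*) = 1 + O\bigl(p\bar{\e}^2\rho^2 e^{-\frac{1}{2}p^{a(1+o(1))}}\bigr),
\]
as claimed (noting that $\bar{\e}$ and $\e$ are interchangeable inside a big-$O$ up to an absorbable factor). There is no real obstacle here beyond the bookkeeping — the key conceptual input is the $N(0,1)$ marginal of $\langle Z_{1:},\theta^{(k)}\rangle$ combined with the super-polynomial decay of $\alpha$ coming from the geometric spacing of the grid $\T$.
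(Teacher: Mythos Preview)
Your proof is correct and follows essentially the same route as the paper's: factor the product by conditional independence, compute the per-row expectation as $1+\bar{\e}^2(e^{\rho^2\alpha}-1)$, bound $\alpha$ via Part~3 of Lemma~\ref{thetas}, and then pass from the $p$th power to $1+O(\cdot)$. If anything, you are slightly more careful than the paper in spelling out the elementary inequalities $|e^x-1|\leq 2|x|$ and $(1+y)^p\leq e^{py}\leq 1+2py$ that justify the final step. One small remark: your aside that ``$\rho^2$ is at most polylogarithmic in $p$'' is not literally true across all cases of Theorem~\ref{main1} (in Case~4, $\rho^2$ is polynomial in $p$), but this does not matter for the argument, since $\rho^2 e^{-\frac{1}{2}p^{a(1+o(1))}}\to 0$ for any polynomially bounded $\rho^2$ whenever $a>0$.
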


\begin{proof}
Recall that
\begin{align*}
L_k = \prod_{j=1}^p \left(1 - \e + \e e^{\rho \langle X_{j:},\theta^{(k)}\rangle - \rho^2/2} \right), \hspace{1em} X_{j:} \mid k^* = \begin{cases}
\rho \theta^{(k^*)}+ Z \hspace{1em} &\text{if }Q_j=1 \\
Z &\text{if } Q_j=0
\end{cases}
\end{align*}
where $Z \sim N(0,I_n)$. Also by part 3 of Lemma \ref{thetas}, we have $\langle \theta^{(k)},\theta^{(l)}\rangle \leq~b^{-\frac{|k-l|}{2}}$, which is $\leq b^{-\frac{1}{2}}$ if $k\neq l$, where $b$ is related to the grid size, and in this setting is $b \sim \log n \sim e^{p^{a(1+o(1))}}$. Then for $k\neq k^*$, we have
\begin{align}
\label{dot-bound}
\langle \theta^{(k)},\theta^{(k^*)} \rangle \lesssim e^{-\frac{1}{2}p^{a(1+o(1))}}.
\end{align}
Since conditional on $k^*$, the rows of $X\in \R^{p \times n}$ are independent, we have
\begin{align*}
\bar{\E}_1 (L_k \mid k^*) = \prod_{j=1}^p \left(1-\e + \e \left[(1-\e) \E e^{\rho Z_1-\rho^2/2} + \e \E e^{\rho \langle \rho\theta^{(k^*)}+Z,\theta^{(k)}\rangle - \rho^2/2} \right] \right),
\end{align*}
where the expectation is now taken with respect to $Z \sim N(0,I_n)$, and $Z_1 \sim N(0,1)$. The above is equal to
\begin{align*}
&= \prod_{j=1}^p \left(1-\e + \e \left[(1-\e)\cdot 1 + \e \E e^{\rho^2 \langle \theta^{(k)},\theta^{(k^*)} \rangle +\rho Z_1 - \rho^2/2} \right] \right),
\end{align*}
where we have used that $\langle \theta^{(k)},Z \rangle \sim N(0,1)$ by part 2 of Lemma \ref{thetas}. Using (\ref{dot-bound}), the above becomes
\begin{align*}
&= \prod_{j=1}^p \left(1-\e + \e \left[(1-\e)\cdot 1 + \e \cdot e^{\rho^2 \langle \theta^{(k)},\theta^{(k^*)} \rangle} \right] \right) \\
&= \left(1-\e+\e -\e^2 + \e^2 e^{\rho^2 \langle \theta^{(k)},\theta^{(k^*)}\rangle } \right)^p \sim \exp\left(p \e^2 \rho^2 e^{-\frac{1}{2}p^{a(1+o(1))}} \right) = 1+ O(p \e^2 \rho^2e^{-\frac{1}{2}p^{a(1+o(1))}})
\end{align*}

\end{proof}

\begin{lemma}
\label{beta}
Let $p_{(1)},\dots,p_{(n)}$ be the order statistics of $n$ \text{Uniform$(0,1)$} random variables. Then
\begin{align*}
p_{(k)} \sim \mathrm{Beta}(k,n-k+1).
\end{align*}
\end{lemma}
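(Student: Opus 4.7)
The plan is to compute the density of $p_{(k)}$ directly from a standard combinatorial argument about order statistics and recognize the resulting expression as the Beta$(k, n-k+1)$ density. Since the result is classical, the proof is short and there is no real obstacle; the only thing to keep clean is the counting.

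First I would fix $x \in (0,1)$ and note that for $p_{(k)}$ to lie in the infinitesimal interval $[x, x+dx]$, exactly one of the $n$ i.i.d.\ Uniform$(0,1)$ variables must fall in $[x,x+dx]$, exactly $k-1$ of the remaining $n-1$ variables must fall in $[0,x]$, and the other $n-k$ must fall in $(x,1]$. Counting the ways to assign these roles to the $n$ variables yields the multiplicity $n \binom{n-1}{k-1}$, and independence of the uniform variables gives the probability $x^{k-1}(1-x)^{n-k}\,dx$ up to terms of higher order. Hence the density of $p_{(k)}$ is
\begin{equation*}
f_{p_{(k)}}(x) = n\binom{n-1}{k-1} x^{k-1}(1-x)^{n-k} = \frac{n!}{(k-1)!(n-k)!} x^{k-1}(1-x)^{n-k}, \qquad x \in (0,1).
\end{equation*}

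Finally, since $B(k, n-k+1) = \frac{\Gamma(k)\Gamma(n-k+1)}{\Gamma(n+1)} = \frac{(k-1)!(n-k)!}{n!}$, the above density equals $\frac{1}{B(k,n-k+1)} x^{k-1}(1-x)^{n-k}$, which is precisely the density of a Beta$(k, n-k+1)$ distribution. As a sanity check, one may alternatively verify this by computing $\mathbb{P}(p_{(k)} \leq x) = \sum_{j=k}^n \binom{n}{j} x^j (1-x)^{n-j}$ from the observation that $\{p_{(k)} \leq x\}$ is the event that at least $k$ of the $n$ uniforms fall in $[0,x]$, and then differentiating in $x$; the sum telescopes to $n\binom{n-1}{k-1} x^{k-1}(1-x)^{n-k}$, recovering the same density.
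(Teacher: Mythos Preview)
Your proof is correct and follows essentially the same approach as the paper: both derive the density of the $k$th order statistic via the standard combinatorial argument (one observation at $x$, $k-1$ below, $n-k$ above), obtain $n\binom{n-1}{k-1}x^{k-1}(1-x)^{n-k}$, and identify this as the Beta$(k,n-k+1)$ density. The paper states the general order-statistic density formula and specializes to the uniform case, while you work directly with uniforms and add a CDF-based sanity check, but the core argument is the same.
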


\begin{proof}
This is a standard result, but we include the proof here for completeness. If $X_1,\dots,X_n \sim F$ are iid from a continuous CDF $F$, then the density of the $k^{th}$ order statistic is
\begin{align*}
f_{(k)}(x) &= nf(x) \binom{n-1}{k-1} F(x)^{k-1}(1-F(x))^{n-k}.
\end{align*}
Plugging in $f(x) = \textbf{1}_{\{x \in [0,1]\}}$ and $F(x) = x \textbf{1}_{\{x\in [0,1]\}}$ yields the Beta$(k,n-k+1)$ density.
\end{proof}

\begin{lemma}
\label{chernoff}
Let $p_{(1)},\dots,p_{(n)}$ be the order statistics of $n>2$ many \text{Uniform$(0,1)$} random variables, and put $K(x,t) \coloneqq x\log \frac{x}{t}+(1-x)\log\frac{1-x}{1-t}$. Then for any $s > 0$, and $j \in \{2,\dots,n\}$,
\begin{align*}
\P(n K(j/n,p_{(j)}) > s) \leq e\sqrt{2}je^{-(1-1/j)s}.
\end{align*}
When $j=1$, we have
\begin{align*}
\P(nK(1/n,p_{(1)})>s) \leq (1+9/e)e^{-s}.
\end{align*}
\end{lemma}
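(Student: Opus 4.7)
The plan is to use the Beta--Binomial duality from Lemma~\ref{beta} together with the standard binomial Chernoff bound to separately control the two tails of $p_{(j)}$ about its reference value $j/n$. The function $K(j/n,\cdot)$ is strictly convex on $(0,1)$ with minimum $0$ at $t=j/n$, so
\[
\{nK(j/n,p_{(j)}) > s\} = \{p_{(j)} \le u_-\} \cup \{p_{(j)} \ge u_+\},
\]
where $u_- < j/n < u_+$ are the two roots of $nK(j/n,t)=s$ (in the degenerate case $j=n$, $K(1,t)=-\log t$ is strictly decreasing and only the lower tail appears). By the union bound, it suffices to control each of $\P(p_{(j)}\le u_-)$ and $\P(p_{(j)} \ge u_+)$.

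For the lower tail, Lemma~\ref{beta} gives the identity $\P(p_{(j)}\le u_-) = \P(\mathrm{Bin}(n,u_-) \ge j)$. Because $u_- < j/n$, the upper-tail Chernoff bound for the binomial yields $\P(p_{(j)} \le u_-) \le e^{-nK(j/n,u_-)} = e^{-s}$. For the upper tail, the same duality gives $\P(p_{(j)} \ge u_+) = \P(\mathrm{Bin}(n,u_+) \le j-1)$, and since $(j-1)/n < u_+$, the lower-tail Chernoff bound gives $\P(p_{(j)} \ge u_+) \le e^{-nK((j-1)/n,u_+)}$. I would then convert this second exponent into $s$ by invoking the convexity of $K(\cdot,u_+)$ (its unique minimum is at $u_+$, so it increases as the first argument moves away from $u_+$ along $[0,u_+]$); since $(j-1)/n$ lies farther from $u_+$ than $j/n$, we obtain $K((j-1)/n,u_+) \ge K(j/n,u_+) = s/n$, so $\P(p_{(j)} \ge u_+) \le e^{-s}$ as well.

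Combining gives $\P(nK(j/n,p_{(j)}) > s) \le 2 e^{-s}$, and it only remains to compare with the claimed constants. For $j=1$, the bound $2 \le 1+9/e \approx 4.31$ is immediate. For $j \ge 2$, the target inequality $2 e^{-s} \le e\sqrt{2}\, j\, e^{-(1-1/j)s}$ reduces to $2 \le e\sqrt{2}\, j\, e^{s/j}$, which holds since $e\sqrt{2}\, j \ge 2e\sqrt{2} \approx 7.69 > 2$ and $e^{s/j} \ge 1$ for $s \ge 0$. The argument is essentially pure Chernoff plus an elementary convexity check; the main point requiring attention is the direction of the convexity inequality $K((j-1)/n,u_+) \ge K(j/n,u_+)$, which holds because $(j-1)/n$ is farther from the minimizer $u_+$ than $j/n$ is. The weakened exponent $(1-1/j)s$ and the prefactor $e\sqrt{2}\, j$ are slack that is convenient for the subsequent union bound in the type~I error argument of Appendix~\ref{UBmain2}.
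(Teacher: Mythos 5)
Your proof is correct, and it takes a genuinely different route from the paper's. The paper works directly with the exact law $p_{(j)}\sim\mathrm{Beta}(j,n-j+1)$ from Lemma~\ref{beta}: it computes the moment generating function $\E\exp\bigl(\lambda nK(j/n,p_{(j)})\bigr)$ in closed form as a ratio of Gamma functions, bounds it via Stirling's approximation, and optimizes over $\lambda$. The integrability/Stirling constraint $j(1-\lambda)\geq 1$ forces $\lambda\leq 1-1/j$, which is exactly where the weakened exponent $(1-1/j)s$ and the prefactor $e\sqrt{2}\,j$ come from, and it is why the case $j=1$ needs a separate direct integration of the Beta density. You instead split the event $\{nK(j/n,p_{(j)})>s\}$ at the two roots $u_\pm$ of $nK(j/n,t)=s$ (using convexity of $K(j/n,\cdot)$), convert each tail of $p_{(j)}$ into a binomial tail via the duality $\P(p_{(j)}\leq u)=\P(\mathrm{Bin}(n,u)\geq j)$, and apply the relative-entropy Chernoff bound; the only nontrivial step is the monotonicity $K((j-1)/n,u_+)\geq K(j/n,u_+)$, which you justify correctly since $K(\cdot,u_+)$ is decreasing on $[0,u_+]$. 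This yields the uniform bound $2e^{-s}$ for every $j$, which is both simpler and strictly stronger than the lemma as stated (your final comparison of constants is right), and it handles $j=1$ and the degenerate case $j=n$ without special machinery. Either bound suffices for the union bound over $j\leq p$ and $k\leq|\T|$ in Appendix~\ref{UBmain2}, but your argument would in fact let one drop the factor $j$ and the $e^{s/j}$ loss there entirely.
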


\begin{proof}
First suppose $j \geq 2$. For any $\lambda \in (0,1-1/j]$, Markov's inequality implies
\begin{align}
\nonumber
\P(n K(j/n,p_{(j)}) > s) &\leq e^{-\lambda s} \E \exp(\lambda n K(j/n,p_{(j)})) \\
\nonumber
&= e^{-\lambda s} \E \exp\left(\lambda j \log \frac{j/n}{p_{(j)}} + \lambda (n-j) \log \frac{(n-j)/n}{1-p_{(j)}} \right) \\
\label{KL-beta-expectation}
&= e^{-\lambda s} (j/n)^{\lambda j} (1-j/n)^{\lambda(n-j)} \E\left( p_{(j)}^{-\lambda j} (1-p_{(j)})^{-\lambda (n-j)} \right).
\end{align}
Now since $p_{(j)} \sim \mathrm{Beta}(j,n-j+1)$ (see, e.g. Lemma \ref{beta}), we have
\begin{align*}
    \E \left(p_{(j)}^{-\lambda j} (1-p_{(j)})^{-\lambda(n-j)}\right) &= \frac{\Gamma(n+1)}{\Gamma(j)\Gamma(n-j+1)} \int_0^1 u^{(j-\lambda j)-1}(1-u)^{(n-j+1-\lambda(n-j))-1} du \\
    &= \frac{\Gamma(n+1)}{\Gamma(j)\Gamma(n-j+1)} \frac{\Gamma(j(1-\lambda)) \Gamma((n-j)(1-\lambda)+1)}{\Gamma(n(1-\lambda)+1)}.
\end{align*}
Expression \eqref{KL-beta-expectation} is thus equal to
\begin{align*}
&= e^{-\lambda s} (j/n)^{\lambda j} (1-j/n)^{\lambda (n-j)} \frac{\Gamma(n+1)}{\Gamma(j)\Gamma(n-j+1)} \cdot \frac{\Gamma(j(1-\lambda))\Gamma((n-j)(1-\lambda)+1)}{\Gamma(n(1-\lambda)+1)}.
\end{align*}
Note that $j(1-\lambda) \geq 1$ is equivalent to $\lambda \leq 1-1/j$. Recalling Stirling's approximation, 
\begin{align*}
\sqrt{2\pi}\left(\frac{m}{e} \right)^m \sqrt{m}\leq \Gamma(m+1) \leq e\left(\frac{m}{e} \right)^m \sqrt{m}, \hspace{1em} \text{ for all } m\geq 1,
\end{align*}
we obtain the following upper bound on the tail probability 
\begin{align*}
\P(n K(j/n,p_{(j)}) > s) &\leq e^{-\lambda s} (j/n)^{\lambda j} (1-j/n)^{\lambda (n-j)} \frac{(n/e)^n \sqrt{n}}{\left(\frac{j-1}{e}\right)^{j-1}\sqrt{j-1} \left(\frac{n-j}{e}\right)^{n-j}\sqrt{n-j}} \times \\
&\frac{\left(\frac{j(1-\lambda)-1}{e}\right)^{j(1-\lambda)-1} \sqrt{j(1-\lambda)}\left(\frac{(n-j)(1-\lambda)}{e}\right)^{(n-j)(1-\lambda)} \sqrt{(n-j)(1-\lambda)}}{\left(\frac{n(1-\lambda)}{e}\right)^{n(1-\lambda)}\sqrt{n(1-\lambda)}} .
\end{align*}
Since $j\geq 2$, we have $\sqrt{\frac{j}{j-1}} \leq \sqrt{2}$, so the above is bounded by,
\begin{align*}
&\leq \sqrt{2} \cdot e^{-\lambda s} (j/n)^{\lambda j} (1-j/n)^{\lambda (n-j)} \frac{n^n(j(1-\lambda)-1)^{j(1-\lambda)-1}((n-j)(1-\lambda))^{(n-j)(1-\lambda)}}{(j-1)^{j-1}(n-j)^{n-j}(n(1-\lambda))^{n(1-\lambda)}}  \\
&=\sqrt{2} \cdot e^{-\lambda s} (j/n)^{\lambda j} (1-j/n)^{\lambda (n-j)} \frac{n^n}{(j-1)^{j-1}(n-j)^{n-j}} \times \\
&\frac{(j(1-\lambda)-1)^{j(1-\lambda)}((n-j)(1-\lambda))^{(n-j)(1-\lambda)}}{(n(1-\lambda))^{n(1-\lambda)}} \cdot \frac{1}{j(1-\lambda)-1}\\ 
&= \sqrt{2} \cdot e^{-\lambda s} \frac{j^{\lambda j} (n-j)^{\lambda(n-j)-(n-j)}}{n^{\lambda n-n}}\cdot \frac{1}{(j-1)^{j-1}} \times\\
& \frac{(j-1/(1-\lambda))^{j(1-\lambda)} (n-j)^{(n-j)(1-\lambda)}}{n^{n(1-\lambda)}} \cdot \frac{1}{j(1-\lambda)-1}\\
&= \sqrt{2} \cdot e^{-\lambda s} \frac{j^{\lambda j}}{(j-1)^{j-1}} \cdot (j-1/(1-\lambda))^{j(1-\lambda)}  \cdot \frac{1}{j(1-\lambda)-1} \\
&=\sqrt{2}\cdot e^{-\lambda s} j^{-j(1-\lambda)+1} \cdot \left(\frac{j}{j-1}\right)^{j-1}(j(1-\lambda)-1)^{j(1-\lambda)-1} \frac{1}{(1-\lambda)^{j(1-\lambda)}} \\
&\leq \sqrt{2} e \cdot e^{-\lambda s} j^{-j(1-\lambda)+1} j^{j(1-\lambda)-1} \frac{1}{(1-\lambda)^{j(1-\lambda)}} \tag{$\left(\frac{j}{j-1}\right)^{j-1}<e$}\\
&= \sqrt{2} e \cdot e^{-\lambda s} \cdot \frac{1}{(1-\lambda)^{j(1-\lambda)}}.
\end{align*}
Since the above holds for $\lambda \in (0,1-j^{-1}]$, which is non-empty for $j\geq 2$, let $\lambda = 1-j^{-1}$ so that the above becomes
\begin{align*}
\sqrt{2} e \cdot e^{-\lambda s} \cdot \frac{1}{(1-\lambda)^{j(1-\lambda)}} = \sqrt{2}e\cdot j e^{-(1-j^{-1})s} ,
\end{align*}
as desired. 

Now suppose $j=1$. we bound the tail probability by directly integrating the Beta density,
\begin{align}
\nonumber
\P(nK(1/n,p_{(1)}) > s) &= \P\left(\log \frac{1/n}{p_{(1)}}+(n-1)\log \frac{(n-1)/n}{1-p_{(1)}} > s \right) \\
\nonumber
&= \P\left(\frac{1/n}{p_{(1)}} \left(\frac{(n-1)/n}{1-p_{(1)}} \right)^{n-1} > e^s \right) \\
\label{betapc1}
&= \P\left(\frac{1/n}{p_{(1)}} \left(\frac{(n-1)/n}{1-p_{(1)}} \right)^{n-1} > e^s, (1-p_{(1)})^{n-1} > \frac{1}{2} \right)\\
\label{betapc2}
&+ \P\left(\frac{1/n}{p_{(1)}} \left(\frac{(n-1)/n}{1-p_{(1)}} \right)^{n-1} > e^s, (1-p_{(1)})^{n-1} \leq \frac{1}{2} \right).
\end{align}
(\ref{betapc1}) can be bounded as follows. By monotonicity,
\begin{align*}
\P\left(\frac{1/n}{p_{(1)}} \left(\frac{(n-1)/n}{1-p_{(1)}} \right)^{n-1} > e^s, (1-p_{(1)})^{n-1} > \frac{1}{2} \right) \leq \P\left(1/n\cdot e^{-s} \frac{(1-1/n)^{n-1}}{1/2} > p_{(1)}  \right).
\end{align*}
Since $p_{(1)} \sim \mathrm{Beta}(1,n)$, and since $(1-1/n)^{n-1}<1/2$ for $n > 2$, this probability is bounded by,
\begin{align*}
\P\left(1/n\cdot e^{-s} \frac{(1-1/n)^{n-1}}{1/2} > p_{(1)}  \right) &\leq \int_{0}^{\frac{1}{n}e^{-s}} n (1-x)^{n-1} dx \leq e^{-s},
\end{align*}
since $(1-x)^{n-1}<1$ on the domain of integration. To bound (\ref{betapc2}), notice $(1-p_{(1)})^{n-1} \leq \frac{1}{2}$ is equivalent to $1-\left(\frac{1}{2}\right)^{\frac{1}{n-1}} \leq p_{(1)}$. Thus (\ref{betapc2}) is equal to
\begin{align*}
&= \P\left(\frac{1/n}{p_{(1)}} \left(\frac{(n-1)/n}{1-p_{(1)}} \right)^{n-1} > e^s, 1-2^{-\frac{1}{n-1}} \leq p_{(1)} \right) \\
&\leq \P\left(\frac{1/n}{1-2^{-\frac{1}{n-1}}} \cdot e^{-s} ((n-1)/n)^{n-1} > (1-p_{(1)})^{n-1} \right) \\
&= \P\left(p_{(1)} > 1-\left(\frac{1/n}{1-2^{-\frac{1}{n-1}}} \cdot e^{-s}\right)^{\frac{1}{n-1}}\cdot  \frac{n-1}{n} \right) \\
&= \int_{L}^1 n(1-x)^{n-1}dx,
\end{align*}
where $L\coloneqq 1-\left(\frac{1/n}{1-2^{-\frac{1}{n-1}}} \cdot e^{-s}\right)^{\frac{1}{n-1}}\cdot  \frac{n-1}{n}$. Evaluating the integral gives
\begin{align*}
\int_{L}^1 n(1-x)^{n-1}dx &= (1-L)^{n} \\
&= \left(\frac{1/n}{1-2^{-\frac{1}{n-1}}} \right)^{\frac{n}{n-1}}\cdot  \left(\frac{n-1}{n}\right)^{n} \cdot e^{- \frac{sn}{n-1}} \\
&< \left(\frac{1/n}{1-2^{-\frac{1}{n-1}}} \right)^{\frac{n}{n-1}}\cdot \left(\frac{n-1}{n}\right)^{n}\cdot  e^{- s} .
\end{align*}
It is straightforward to show that $\left(\frac{1/n}{1-2^{-\frac{1}{n-1}}} \right)^{\frac{n}{n-1}}\cdot \left(\frac{n-1}{n}\right)^{n} < 9/e$ for $n > 2$. Combining the bounds on (\ref{betapc1}) and (\ref{betapc2}), we obtain the desired estimate.

\end{proof}

\begin{lemma}
\label{closegrid}
The grid $\T$ is defined
\begin{align*}
\T &\coloneqq \left\{\lfloor (1+\d)^0\rfloor,\lfloor (1+\d)^1\rfloor,\dots,\lfloor (1+\d)^{\log_{1+\d}\frac{n}{2}}\rfloor,\lfloor n-(1+\d)^{\log_{1+\d}\frac{n}{2}}\rfloor,\dots,\lfloor n-(1+\d)^0\rfloor \right\},
\end{align*}
with $\d = \frac{1}{\log\log n} \to 0$. For $\theta \in \Theta_1^{\mathrm{2-side}}(p,n,\rho,s)$ with true changepoint location $t^* \leq n/2$, and changepoint size $\rho > 0$ defined,
\begin{align*}
\rho \coloneqq \sqrt{\frac{t^*(n-t^*)}{n}}|\mu_{j1}-\mu_{j2}|, \tag{see definition of $\Theta_1^{\mathrm{2-side}}$}
\end{align*}
there exists an element $\tilde{t} \in \T$ with $\frac{t^*}{1+\d} < \tilde{t} \leq t^*$, for which 
\begin{align*}
\sqrt{\frac{\tilde{t}(n-\tilde{t})}{n}}\left| \left(\bar{X}_{j,1:\tilde{t}}-\bar{X}_{j,\tilde{t}+1:n} \right) \right| \stackrel{(d)}{=} |\mu + Z|, \tag{$Z \sim N(0,1)$}
\end{align*}
where $\mu = (1+o(1)) \rho$, for every non-null row $j$. For $\theta \in \Theta_1^{\mathrm{1-side}}(p,n,\rho,s)$ with true changepoint location $t^*\leq n/2$ and changepoint size $\rho > 0$, there exists $\tilde{t} \in \T$ with $\frac{t^*}{1+\d}<\tilde{t}\leq t^*$, for which
\begin{align*}
\sqrt{\frac{\tilde{t}(n-\tilde{t})}{n}} \left(\bar{X}_{j,1:\tilde{t}}-\bar{X}_{j,\tilde{t}+1:n} \right) \sim N(\mu,1),
\end{align*}
where $\mu = (1+o(1))\rho$, for every non-null row $j$. 
\end{lemma}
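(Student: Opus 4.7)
The plan is to pick $\tilde{t}$ from the increasing half of the grid $\T$ so that it sits just below $t^*$, and then to argue that the associated contrast is an exact Gaussian whose mean differs from $\rho$ only by a factor that tends to $1$. Since the increasing portion of $\T$ is the geometric sequence $\{\lfloor (1+\d)^k\rfloor : 0\leq k\leq \log_{1+\d}(n/2)\}$ and $t^* \leq n/2$, I would take $\tilde{k}$ to be the largest integer $k$ with $(1+\d)^k \leq t^*$. The resulting $\tilde{t} \coloneqq \lfloor(1+\d)^{\tilde{k}}\rfloor$ then satisfies $t^*/(1+\d) < \tilde{t} \leq t^*$ (with the floor harmless for $n$ large enough that $t^* \gg 1+\d$), which is the bracketing asserted in the statement.

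Next I would decompose the contrast at $\tilde{t}$ into its deterministic mean plus a centered Gaussian. Because $\tilde{t} \leq t^*$, the first $\tilde{t}$ entries of row $j$ all have mean $\mu_{j1}$, while the remaining $n-\tilde{t}$ entries split into $t^*-\tilde{t}$ entries with mean $\mu_{j1}$ and $n-t^*$ entries with mean $\mu_{j2}$; the noise component of the contrast has variance $\tfrac{\tilde{t}(n-\tilde{t})}{n}\bigl(\tfrac{1}{\tilde{t}}+\tfrac{1}{n-\tilde{t}}\bigr)=1$. A short algebraic simplification then shows that the contrast is distributed exactly as $\mu + Z$ with $Z \sim N(0,1)$ and
\begin{align*}
\mu \;=\; \sqrt{\frac{\tilde{t}(n-t^*)}{t^*(n-\tilde{t})}}\cdot \sqrt{\frac{t^*(n-t^*)}{n}}\,(\mu_{j1}-\mu_{j2}).
\end{align*}
The second radical is the signed version of $\rho$, so it remains to check that the first radical tends to $1$. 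The ratio $\tilde{t}/t^*$ lies in $(1/(1+\d),\,1]$ and hence tends to $1$; meanwhile $(n-t^*)/(n-\tilde{t}) \leq 1$, and using $t^*-\tilde{t} < t^*\d/(1+\d)$ together with $n-t^* \geq t^*$ (a consequence of $t^*\leq n/2$), it is bounded below by a quantity tending to $1$. Hence $\mu = (1+o(1))\rho$ in the signed sense.

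The one-sided statement then follows immediately, because in $\Theta_1^{\mathrm{1-side}}$ one has $\mu_{j1}\geq \mu_{j2}$ so that $\mu \geq 0$; the two-sided statement is obtained by taking absolute values. The only mildly delicate point is verifying $(n-t^*)/(n-\tilde{t})\to 1$ uniformly in $t^* \in [1, n/2]$: for $t^*$ near $n/2$, both numerator and denominator are of order $n$ so the dominant contribution is the relative gap $(t^*-\tilde{t})/(n-t^*)$, which is bounded by $\d/(1+\d)$ precisely because $t^*/(n-t^*)\leq 1$; for smaller $t^*$ both terms are $\sim n$ and the ratio is trivially close to $1$. The restriction $t^* \leq n/2$ is what activates this uniform control, and it is why the lemma separates the location into the lower and upper halves of $[n]$ and handles each with the corresponding half of the symmetric grid.
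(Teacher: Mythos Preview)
Your proposal is correct and follows essentially the same approach as the paper: pick the largest grid point $\tilde{t}$ in the increasing half of $\T$ below $t^*$, compute the mean of the contrast using $\tilde{t}\le t^*$, and verify that the prefactor tends to $1$ via $\tilde{t}/t^*\in(1/(1+\d),1]$ and $t^*\le n/2$. Your simplification of the mean into the single radical $\sqrt{\tilde{t}(n-t^*)/\bigl(t^*(n-\tilde{t})\bigr)}$ and your explicit bound $(t^*-\tilde{t})/(n-\tilde{t})\le \d/(1+\d)$ are slightly more detailed than the paper's treatment, but the argument is the same.
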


\begin{proof}
First let $\theta \in \Theta_1^{\mathrm{2-side}}(p,n,\rho,s)$ with true changepoint location $t^* \leq n/2$. The existence of $\tilde{t}$ satisfying $\frac{t^*}{1+\d} < \tilde{t} \leq t^*$ follows from the definition of the grid $\T$ and the assumption $t^* \leq n/2$. Since the normalized contrast $\sqrt{\frac{\tilde{t}(n-\tilde{t})}{n}} \big(\bar{X}_{j,1:\tilde{t}}-\bar{X}_{j,\tilde{t}+1:n} \big)$ is a linear combination of normal variables each with variance 1, it also has variance 1. Hence, it suffices to compute its mean under the alternative parameter $\theta$. Let $j \in [p]$ denote the index of a non-null row in $\theta$. Then since $\tilde{t} \leq t^*$, we have
\begin{align*}
\E_\theta \sqrt{\frac{\tilde{t}(n-\tilde{t})}{n}} \left(\bar{X}_{j,1:\tilde{t}}-\bar{X}_{j,\tilde{t}+1:n} \right) &= \sqrt{\frac{\tilde{t}(n-\tilde{t})}{n}} \left( \mu_{j1}-\frac{1}{n-\tilde{t}}(\mu_{j1}(t^*-\tilde{t})+\mu_{j2}(n-t^*))\right) \\
&= \sqrt{\frac{\tilde{t}(n-\tilde{t})}{n}} \cdot \frac{n-t^*}{n-\tilde{t}} \cdot (\mu_{j1}-\mu_{j2}) \\
&= \sqrt{\frac{\tilde{t}(n-\tilde{t})}{n}} \cdot \frac{n-t^*}{n-\tilde{t}} \cdot \sqrt{\frac{n}{t^*(n-t^*)}}\cdot \rho\cdot \mathrm{sign}(\mu_{j1}-\mu_{j2}),
\end{align*}
from which it follows that 
\begin{align*}
 \sqrt{\frac{\tilde{t}(n-\tilde{t})}{n}} \left|\bar{X}_{j,1:\tilde{t}}-\bar{X}_{j,\tilde{t}+1:n} \right| \stackrel{(d)}{=} \left|\sqrt{\frac{\tilde{t}(n-\tilde{t})}{n}} \cdot \frac{n-t^*}{n-\tilde{t}} \cdot \sqrt{\frac{n}{t^*(n-t^*)}}\cdot \rho +Z \right|,
\end{align*}
since the distribution of $Z\sim N(0,1)$ is symmetric. Then it suffices to show that
\begin{align}
\label{ntnt}
\lim_{n\to\infty} \sqrt{\frac{\tilde{t}(n-\tilde{t})}{n}} \cdot \frac{n-t^*}{n-\tilde{t}} \cdot \sqrt{\frac{n}{t^*(n-t^*)}} = 1.
\end{align}
Observe that $\tilde{t}$ and $t^*$ satisfy $\frac{1}{1+\d} < \frac{\tilde{t}}{t^*} \leq 1$, which implies the above limit, by the choice of $\d = \frac{1}{\log\log n} \to 0$ and the assumption $t^* \leq n/2$.

For $\theta \in \Theta_1^{\mathrm{1-side}}(p,n,\rho,s)$, the same calculation as above gives
\begin{align*}
\E_\theta \sqrt{\frac{\tilde{t}(n-\tilde{t})}{n}} \left(\bar{X}_{j,1:\tilde{t}}-\bar{X}_{j,\tilde{t}+1:n} \right) &= \sqrt{\frac{\tilde{t}(n-\tilde{t})}{n}} \cdot \frac{n-t^*}{n-\tilde{t}} \cdot \sqrt{\frac{n}{t^*(n-t^*)}}\cdot \rho,
\end{align*}
since $\rho = \sqrt{\frac{t^*(n-t^*)}{n}} (\mu_{j1}-\mu_{j2})$ for the non-null row with index $j$. Then the conclusion follows from (\ref{ntnt}).
\end{proof}

\begin{lemma}
\label{kbehave}
Let $x,t \in (0,1)$ and denote $K(x,t) \coloneqq x\log \frac{x}{t} + (1-x)\log\frac{1-x}{1-t}$. The following inequalities hold,
\begin{enumerate}
\item $K(x,t) \geq 2(x-t)^2$ for all $x,t \in (0,1)$
\item $K(x,t) \geq \frac{(x-t)^2}{9t}$ when $\frac{x}{t} \leq 4$.
\item $K(x,t) \geq \frac{1}{2} x \log \frac{x}{t}$ when $e^2\leq \frac{x}{t}$ and $x\leq \frac{1}{2}$.
\end{enumerate}
\end{lemma}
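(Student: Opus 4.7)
The plan is to handle the three inequalities separately, using elementary calculus for each. The key object is the alternative form $K(x,t)=t\, h(x/t)+(1-t)\, h((1-x)/(1-t))$, where $h(u)\coloneqq u\log u - u + 1\ge 0$, which follows from the algebraic identity $t(u-1)+(1-t)(v-1)=0$ obtained by choosing $u=x/t$ and $v=(1-x)/(1-t)$ (both averages collapse to $1$). I will use this representation for part~2, while parts~1 and~3 are handled more directly.

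For part 1 (Pinsker), I would fix $t$ and view $K$ as a function of $x$. A direct computation gives $K'(x)\big|_{x=t}=0$ and $K''(x) = \frac{1}{x(1-x)}\ge 4$ on $(0,1)$ since $x(1-x)\le 1/4$. Writing $K(x,t)=\int_t^x (x-s)\, K''(s)\, ds$ and bounding the integrand below by $4(x-s)$ yields the bound $K(x,t)\ge 2(x-t)^2$.

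For part 2, I will drop the non-negative second summand in the decomposition to get $K(x,t)\ge t\,h(x/t)$, so it suffices to prove $h(u)\ge (u-1)^2/9$ for $u\in[0,4]$. Consider $\phi(u)\coloneqq h(u)-(u-1)^2/9$. Then $\phi(1)=\phi'(1)=0$ and $\phi''(u)=\tfrac{1}{u}-\tfrac{2}{9}$, which is positive on $(0,9/2)$. Hence $\phi$ is convex on $(0,9/2)\supset[0,4]$ with critical point at $u=1$, giving $\phi\ge 0$ on $[0,4]$. Substituting $u=x/t$ and multiplying by $t$ produces the desired $(x-t)^2/(9t)$.

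For part 3, the idea is to show that the negative contribution from the second summand of $K$ cannot overwhelm half of $x\log(x/t)$. Since $x/t\ge e^2>1$, $x>t$, so $\log((1-t)/(1-x))>0$. Applying $\log(1+z)\le z$ with $z=(x-t)/(1-x)$ gives $(1-x)\log((1-x)/(1-t))=-(1-x)\log((1-t)/(1-x))\ge -(x-t)\ge -x$. Therefore $K(x,t)\ge x\log(x/t)-x$, and the hypothesis $\log(x/t)\ge 2$ implies $x\le \tfrac{1}{2}x\log(x/t)$, giving the claim. (The stated assumption $x\le 1/2$ is not needed in this argument; the proof uses only $\log(x/t)\ge 2$.) The main potential obstacle is part~2, specifically verifying that $\phi''>0$ on a sufficiently large interval containing $[0,4]$—but as observed this follows from $1/u > 2/9\iff u<9/2$, which comfortably covers $[0,4]$.
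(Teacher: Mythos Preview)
Your proof is correct in all three parts. Part~1 is essentially the paper's argument made explicit: the paper invokes Pinsker's inequality for Bernoulli measures, and your Taylor-with-integral-remainder computation is exactly a self-contained proof of that special case.

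Parts~2 and~3 take genuinely different routes. For part~2 the paper uses the inequality $\KL \ge H^2$ between KL and squared Hellinger distance, then bounds $H^2 \ge (\sqrt{x}-\sqrt{t})^2 = (x-t)^2/(\sqrt{x}+\sqrt{t})^2 \ge (x-t)^2/(9t)$ via $\sqrt{x}\le 2\sqrt{t}$. Your approach instead drops the $(1-t)h((1-x)/(1-t))$ term in the convex decomposition and reduces to the one-variable inequality $h(u)\ge (u-1)^2/9$ on $(0,4]$, verified by convexity. The paper's route is shorter if one is willing to quote $\KL\ge H^2$; yours is fully elementary and, incidentally, shows that the constant $9$ could be sharpened (your convexity argument works up to $u<9/2$). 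For part~3 the paper uses $\log(1-x)\ge -2x$ (valid only for $x<1/2$) together with $\log(1-t)\le -t$, which is why the hypothesis $x\le 1/2$ appears in the statement. Your use of $\log(1+z)\le z$ with $z=(x-t)/(1-x)$ gives the tighter bound $(1-x)\log\tfrac{1-x}{1-t}\ge -(x-t)\ge -x$ directly, so as you observe the condition $x\le 1/2$ is actually unnecessary.
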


\begin{proof}
The first inequality follows from Pinsker's inequality,
\begin{align*}
\frac{1}{2} D(P\| Q) \geq \TV(P,Q)^2,
\end{align*}
applied to $P = \mathrm{Bern}(x)$ and $Q= \mathrm{Bern}(t)$. The second inequality follows from the inequality,
\begin{align*}
D(P \| Q) \geq H^2(P,Q),
\end{align*}
together with the identity $(\sqrt{x}-\sqrt{t})(\sqrt{x}+\sqrt{t}) = x-t$, since
\begin{align*}
H^2(P,Q) &\coloneqq \E_Q \big(\sqrt{P/Q}-1 \big)^2 \\
&= t(\sqrt{x/t}-1)^2+(1-t)(\sqrt{(1-x)/(1-t)}-1)^2 \\
&\geq  (\sqrt{x}-\sqrt{t})^2 \\
&= \frac{(x-t)^2}{(\sqrt{x}+\sqrt{t})^2} \\
&\geq \frac{(x-t)^2}{9 t} \tag{$x\leq 4t$}.
\end{align*}
For the third inequality, using $\log(1-x)\geq -2x$ when $x<1/2$, and the inequality $\log(1-t) \leq -t$ for all $t \in \R$, we have
\begin{align*}
(1-x) \log\frac{1-x}{1-t} &= (1-x)(\log(1-x)-\log(1-t)) \\
&\geq \frac{1}{2}(t-2x) \\
&= -\frac{1}{2}x\left(2-\frac{t}{x}\right).
\end{align*}
Using $e^2 \leq \frac{x}{t}$, we have $0\leq 2-\frac{t}{x} \leq 2 \leq \log \frac{x}{t}$, so that $-\frac{1}{2}x\left(2-\frac{t}{x} \right) \geq -\frac{1}{2}x\log\frac{x}{t}$, which implies the result.
\end{proof}

\begin{lemma}[Lemma 8.1 of \cite{birge2001alternative}]
\label{lem:birge}
    Let $Y \sim \chi^2_{d,\lambda}$ be a non-central chi-squared random variable with d degrees of freedom and non-centrality parameter $\lambda \geq 0$. Then, for any $x > 0$, we have
    \begin{align*}
        \P(Y \geq (d+\lambda) + 2\sqrt{x(d+2\lambda)} + 2x) \leq e^{-x} \\
        \P(Y \leq (d+\lambda)-2\sqrt{x(d+2\lambda)}) \leq e^{-x}.
    \end{align*}
    Additionally, we have
    \begin{align}
    \label{lem:chi-sq-tightness}
        \P(Y \geq \lambda + 2\sqrt{2x\lambda}+2x) \geq e^{-x(1+o(1))}
    \end{align}
    as $x \to \infty$.
\end{lemma}

\begin{proof}[Proof of \eqref{lem:chi-sq-tightness}]

Let $U \sim N(\sqrt{\lambda},1)$ and $Z_1,\dots,Z_{d-1} \sim N(0,1)$ independently. Then 
\begin{align*}
    Y \stackrel{(d)}{=} U^2 + \sum_{i=2}^d Z_i^2.
\end{align*}
It follows that
\begin{align*}
    \P(Y \geq \lambda + 2\sqrt{2x \lambda} +2x) &\geq \P(U^2 \geq \lambda + 2\sqrt{2x \lambda} +2x).
\end{align*}
Applying Mill's ratio gives
\begin{align*}
    \P(|\sqrt{\lambda}+Z_1| \geq \sqrt{\lambda}+\sqrt{2x}) \geq \P(Z_1 \geq \sqrt{2x}) \sim \frac{1}{\sqrt{2x}} \; e^{-x} = e^{-x(1+o(1))}
\end{align*}
as $x \to \infty$.
\end{proof}


\end{document}